\renewcommand{\div}{\operatorname{div}}
\newcommand{\osc}{\operatorname{\bf{osc}}}
\newcommand{\Rr}{{\mathbb{R}}}
\newcommand{\Nn}{{\mathbb{N}}}
\newcommand{\Zz}{{\mathbb{Z}}}
\newcommand{\Tt}{{\mathbb{T}}}
\newcommand{\Hh}{{\overline{H}}}
\newcommand{\Pp}{{\mathcal{P}}}
\newcommand{\finedim}{{\unskip\nobreak\hfil\penalty50
   \hskip2em\hbox{}\nobreak\hfil\mbox{$\Box$ \qquad}
   \parfillskip=0pt \finalhyphendemerits=0\par\medskip}}
\theoremstyle{plain}
\numberwithin{equation}{section}
\newtheorem{clm}{Step}
\newtheorem{teo}{Theorem}[section]
\newtheorem{cor}[teo]{Corollary}
\newtheorem{pro}[teo]{Proposition}
\newtheorem{rem}{Remark}
\newcommand{\dm}{\emph{d} m}
\newcommand{\ep}{\epsilon}
\newcommand{\dx}{\emph{d} x}
\newcommand{\dt}{\emph{d} t}
\newcommand{\dy}{\emph{d} y}
\begin{document}

\title{On the existence of classical solutions  for stationary extended mean field games}
\author{Diogo A. Gomes\footnote{Center for Mathematical Analysis, Geometry, and Dynamical Systems, Departamento de Matem\'atica, Instituto Superior T\'ecnico, 1049-001 Lisboa, Portugal
and
King Abdullah University of Science and Technology (KAUST), CSMSE Division , Thuwal 23955-6900. Saudi Arabia. e-mail: dgomes@math.ist.utl.pt}, Stefania Patrizi\footnote{Center for Mathematical Analysis, Geometry, and Dynamical Systems, Departamento de Matem\'atica, Instituto Superior T\'ecnico, 1049-001 Lisboa, Portugal. e-mail: stefaniapatrizi@yahoo.it}, Vardan
Voskanyan\footnote{Center for Mathematical Analysis, Geometry, and Dynamical Systems, Departamento de Matem\'atica, Instituto Superior T\'ecnico, 1049-001 Lisboa, Portugal. e-mail: vartanvos@gmail.com}}

\date{\today} 

\maketitle

\begin{abstract}
In this paper we consider extended stationary mean field games, that is mean-field games
which depend
on the velocity field of the players.
We prove various a-priori estimates which generalize
the results for quasi-variational mean field games in \cite{GMP}. In addition
we use adjoint method techniques to obtain higher regularity bounds. Then we establish existence of smooth solutions under fairly general conditions by applying the continuity method.
When applied to standard stationary mean-field games as in \cite{ll1}, \cite{GM} or \cite{GMP}
this paper yields various new estimates and regularity properties not available
previously. We discuss additionally several examples where existence of
classical solutions can be proved.
\end{abstract}

\section{Introduction}

In an attempt to understand the limiting behavior of systems involving very large
numbers of rational agents behaving non-cooperatively and under symmetry assumptions,
Lasry and Lions
\cite{ll1, ll2, ll3, ll4}, and, independently, and around the same time
Huang, Malham{\'e}, and Caines \cite{Caines1},
\cite{Caines2},
introduced a class of models called
mean field games. These problems attracted the attention of many other researchers
 and the progress has been quite fast, for recent surveys see \cite{llg2}, and \cite{cardaliaguet} and references therein.
%

Denote by $\Tt^d$ the $d$-dimensional torus, and $\mathcal{P}(\mathbb{T}^d)$  the set of Borel probability measures on $\mathbb{T}^d$ and let $\mathcal{P}^{ac}(\mathbb{T}^d)$ the set of measures from $\mathcal{P}(\mathbb{T}^d)$ which are absolutely continuous. Let
\[
F\colon\mathbb{T}^d\times\mathbb{R}^d\times \mathcal{P}^{ac}(\mathbb{T}^d)\to \mathbb{R}
\]
be a function satisfying appropriate continuity, differentiability and growth conditions.
An important class of stationary mean field games, see for instance \cite{ll1}, can be modeled
by a system of PDE's of the form
\begin{equation}
\label{mfg0}
\begin{cases}
\Delta v(x) +F(x,Dv(x),f)=\overline{F}\\
\Delta v(x)- \div(D_pF(x,Dv(x),f)f(x))=0.
\end{cases}
\end{equation}
To avoid additional difficulties it is usual to consider periodic boundary data, or equivalently, taking $x\in \Tt^d$.
The unknowns of the previous PDE are a triplet $(\overline{F}, v, f)$ where $\overline{F}$ is a real number, $v\in C^2(\Tt^d)$,
and $f\in \Pp(\Tt^d)$.

Stationary mean-field games have an independent interest but also, as shown in \cite{CLLP} (see also \cite{GMS}, and \cite{GMS2},
for discrete state problems)
they encode the asymptotic long time behavior of various mean-field games. Equations of the form \eqref{mfg0}
also arise in calculus of variations problems. One important example is the following:
given $H_0\colon\mathbb{T}^d\times\mathbb{R}^d\to\Rr$ consider the stochastic Evans-Aronsson problem
\[
\inf\limits_{\phi}\int_{\mathbb{T}^d}e^{\ep\Delta\phi+H_0(x,D\phi(x)) }\dx,
\]
where the minimization is taken over all $\phi\in C^2(\Tt^d)$.
The Euler-Lagrange for this functional can be written as
\[
\begin{cases}
\ep\Delta u(x) +H_0(x,Du(x))=\ln m(x)+\overline{H}\\
\ep\Delta m(x)- \div(D_pH_0(x,Du(x))m(x))=0,
\end{cases}
\]
where $\Hh$ is an additional parameter chosen so that $m=e^{\Delta u(x) +H_0(x,Du(x))-\Hh}$ is a probability measure in $\Tt^d$. When $\ep=0$ this problem was studied in \cite{E1} (see also \cite{GISMY}) and the case $\ep>0$
in \cite{GM} (for $d\leq 3$ or quadratic Hamiltonians in arbitrary dimension). A natural generalization
of these problems is
the so called class of quasi-variational mean-field games,
considered in \cite{GMP},
which consists in mean-field games which are perturbations of mean field games with a variational structure.

In this paper we consider a further extension of the mean field problem \eqref{mfg0} which allows the cost function of a player to depend also on
the velocity field of the players. In order to do so,
denote by $\chi(\mathbb{T}^d)$ the set of continuous vector fields on $\mathbb{T}^d.$
Let
\[
H\colon\mathbb{T}^d\times\mathbb{R}^d\times \mathcal{P}^{ac}(\mathbb{T}^d)\times \chi(\mathbb{T}^d)\to \mathbb{R}
\]
be a function satisfying appropriate conditions as we detail in Section \ref{asec}.
We consider the following equation on the $d-$dimensional torus $\mathbb{T}^d.$
\begin{equation}
\label{main}
\begin{cases}
\Delta u(x) +H(x,Du(x),m,V)=\overline{H}\\
\Delta m(x)- \div(V(x)m(x))=0\\
V(x)=D_pH(x,Du(x),m,V).
\end{cases}
\end{equation}
The unknowns for this problems are $u\colon \mathbb{T}^d\to \Rr$, identified with a $\Zz^d$-periodic function on $\Rr^d$ whenever convenient, a probability measure $m\in\mathcal{P}(\mathbb{T}^d)$, the effective Hamiltonian $\overline{H}\in \Rr$ and the effective velocity field $V\in\chi (\mathbb{T}^d).$
We require $m$ to be a probability measure absolutely continuous with respect to Lebesgue measure with strictly positive density.

An example problem is the following:
\begin{equation}
\label{mainexmp}
H(x,p,m,V)=\frac{1}{2}|p|^2-\alpha p\int_{\mathbb{T}^d} V(y)\dm(y)-g(m),
\end{equation}
 with
$\alpha$ small enough, where $g:\Rr_0^+\to \Rr$ is an increasing function,
typically $g(m)=\ln m$ or $g(m)=m^\gamma$.

The main result in this paper is Theorem \ref{maint} which establishes the existence of classical
solutions for \eqref{main} for a general class of Hamiltonians $H$ of which \eqref{mainexmp}
is a main example. In particular, in the case $g(m)=\ln m$ we obtain smooth solutions in any dimension $d$; for the case $g(m)=m^\gamma$, $\gamma> 0$, our results yield smooth solutions
for $d\leq 4$, and, in general, for
$\gamma\leq \frac 1 {d-4}$ if $d\geq 5$.

To the best of our knowledge, all previous results in the literature
for mean-field game do not consider the dependence on $V$. However,
even without this dependence this paper extends substantially previous
results.
In \cite{ll3} Lions and Lasry considered mean field games with Lipschitz (with respect to Wasserstein metric) nonlinearities (see also \cite{ll2} and the notes P.Cardaliaguet \cite{ cardaliaguet} for a detailed proof);
additionally, in the same paper,
the existence of solutions in Sobolev spaces for time dependent problems was also considered. In the stationary setting related estimates are discussed in the present paper
in section \ref{elmes} as a preliminary step towards additional regularity.
In \cite{GM} and \cite{GMP} the variational and quasivariational settings for stationary mean field games were considered.
In \cite{GM} the $g(m)=\ln m$ was addressed and for dimension $d\leq 3$ existence of classical solutions
was established. In \cite{GMP}, for $g(m)=m^{\gamma}$, $0<\gamma<1$, the following a-priori estimate
was proved: $u\in W^{2,q},$ $q>1$ in dimensions $d\leq 3$.

%

The paper is structured as follows: we start in Section \ref{asec} by discussing the main hypothesis.
Then we proceed to Section \ref{elmes} where we present some elementary estimates for solutions to \eqref{main} which are analogues of the estimates for time-dependent problems in \cite{ll3} and the
ones in \cite{GMP}. In particular we prove $H^1$ bounds for $m$ and $W^{1,p}$ bounds for $u$. In Section \ref{fintp} we obtain further integrability and
regularity properties of $m$ and $u$, such as $H^1$ bounds of $|\ln m|^q$ for any $q\geq 1$, integrability of $\frac{1}{m^{r_0}}$ for some $r_0>0$, $W^{2,q}$ bounds for $u$ for some $q>1$ if $g(m)=\ln m$. Furthermore we prove $L^r$ bound for $g(m)$ with $r>d$, and $L^2$ bound for $D(g(m))$ both for logarithmic $g(m)=\ln m$ and power $g(m)=m^{\gamma}$ nonlinearities.
In Section \ref{specl} we consider Hamiltonians of a special form which can not be handled by methods of Section \ref{regadjm}. For these Hamiltonians in case of logarithmic nonlinearity $g(m)=\ln m$ we obtain $L^{\infty}$ bounds for $\frac{1}{m}$, and $W^{2,2}$ bounds for $u$. Additionally, for dimensions not greater than $3$ we establish also $W^{1,\infty}$ and $W^{3,2}$ bounds for $u$.
In the Section \ref{regadjm} we employ the adjoint method technique developed by L. C. Evans
(\cite{E3}) to prove $W^{1,\infty}$ bounds for $u$ for a broad class of Hamiltonians.
This application of the adjoint method extends the ideas in \cite{GM}.
We end the Section by proving a-priori bounds for all derivatives of $m$ and $u$ for both logarithmic $g(m)=\ln m$ and power like $g(m)=m^{\gamma}$ nonlinearities for any $\gamma>0$ if $d\leq 4$ and for $\gamma\in(0,\frac{1}{d-4})$ if $d>4$. The same bounds are also proved for the Hamiltonians of special form considered in Section \ref{specl} with logarithmic nonlinearities in dimensions not grater than $3$.
In Section \ref{exist} we use bounds from Section \ref{regadjm} and continuation method to prove existence of smooth solutions to \eqref{main}, for this we impose further assumptions which are related with the monotonicity conditions by J.M.Lasry-P.L.Lions used to establish uniqueness (see \cite{ll3},\cite{cardaliaguet}). Finally in Section \ref{exmpl} we present two examples of problems for which our existence results apply. The first example is the case without velocity field dependence, the second case concerns Hamiltonians of simple form with small dependence on velocity field.


\section{Assumptions}
\label{asec}

In this section we introduce and discuss
the various assumptions that will be needed throughout the paper. Further hypothesis needed for application of continuation method are discussed only in Section \ref{exist}. Other additional estimates can be proven under different Assumptions, and those are discussed only in Section \ref{specl}.

We will be working under the assumption that $H$ is quasi-variational(\cite{GMP}): 
\renewcommand{\labelenumi}{({\bf A\arabic{enumi}})}
\begin{enumerate}
\item\label{qzv}
There exists a function $g\colon (0,\infty)\to \mathbb{R}$ and
a continuous Hamiltonian
\[
H_0\colon\mathbb{T}^d\times\mathbb{R}^d\times \mathcal{P}(\mathbb{T}^d)\times \chi(\mathbb{T}^d)\to \mathbb{R},
\]
such that:
\begin{equation*}
|H(x,p,m,V)-H_0(x,p,m,V)+g(m(x))|\leq C,
\end{equation*}
for all $(x,p,m,V)\in \mathbb{T}^d\times\mathbb{R}^d\times \mathcal{P}^{ac}(\mathbb{T}^d)\times \chi(\mathbb{T}^d)$.

Note that unlike $H$, $H_0$ does not depend in $m$ pointwisely.
\suspend{enumerate}

\resume{enumerate}
\item
\label{g}
The function $g:(0,\infty)\rightarrow\Rr$ is smooth, strictly increasing.
More precisely one of the following holds:
\begin{enumerate}[label=\alph*)]
\item
\label{lnm}
\[
g(m)=\ln m,
\]
\item
\label{mgama}
\[
g(m)=m^{\gamma} \text{, with } \gamma>0,
\]
\end{enumerate}
in which case we will refer to them as, respectively, Assumption (A\ref{g}\ref{lnm} or (A\ref{g}\ref{mgama}.
\suspend{enumerate}
We suppose that for the Hamiltonians $H:\mathbb{T}^d\times\mathbb{R}^d\times \mathcal{P}^{ac}(\mathbb{T}^d)\times \chi(\mathbb{T}^d)\to \mathbb{R}$ and $H_0:\mathbb{T}^d\times\mathbb{R}^\times \mathcal{P}(\mathbb{T}^d)\times \chi(\mathbb{T}^d)\to \mathbb{R}$  the following assumptions are satisfied:

\resume{enumerate}
 \item\label{pbbl}
There exist constants
$C,\,\delta>0$ such that
for all $(x,p,m,V)\in\mathbb{T}^d\times\mathbb{R}^d\times\mathcal{P}(\mathbb{T}^d) \times \chi(\mathbb{T}^d)$,
\begin{equation*}
|p|^2\leq C+CH_0(x,p,m,V)+\delta \int_{\mathbb{T}^d} |V|^2\dm.
\end{equation*}
\item\label{difp}
For any $m\in\mathcal{P}^{ac}(\mathbb{T}^d)$ and $V\in\chi(\mathbb{T}^d)$ the function $H(x,p,m,V)+g(m(x))$ is smooth in variables $x,p$ with locally uniformly bounded derivatives.
\suspend{enumerate}
For $(x,p,m,V)\in\mathbb{T}^d\times\mathbb{R}^d\times \mathcal{P}^{ac}(\mathbb{T}^d)\times \chi(\mathbb{T}^d)$
we define the Lagrangian $L$ associated with $H$ as
\begin{equation}
\label{lagr}
L(x,p,m,V)=-H(x,p,m,V)+pD_pH(x,p,m,V).
\end{equation}
Note that if $(u,m,V)$ solves \eqref{main},  then
\[
L(x,Du,m,V)=-H(x,Du,m,V)+Du\cdot V.
\]
With this notation we assume further:
\resume{enumerate}
\item
\label{bbl1}
There exists $c>0$ such that, for all $(x,p,m,V)\in\mathbb{T}^d\times\mathbb{R}^d\times \mathcal{P}^{ac}(\mathbb{T}^d)\times  \chi(\mathbb{T}^d)$
\begin{equation}
L(x,p,m,V)\geq cH_0(x,p,m,V)+g(m)-C-\delta \int_{\mathbb{T}^d} |V|^2\dm.
\end{equation}
\item
\label{qbd}
For all $(x,p,m,V)\in\mathbb{T}^d\times\mathbb{R}^d\times \mathcal{P}^{ac}(\mathbb{T}^d)\times  \chi(\mathbb{T}^d)$
\begin{equation}
|D_{p}H(x,p,m,V)|^2\leq C+ CH_0(x,p,m,V) +\delta \int_{\mathbb{T}^d} |V|^2\dm.
\end{equation}
\suspend{enumerate}

The following hypothesis depends implicitly upon the bounds given in Proposition \ref{bV}.

\resume{enumerate}
\item
\label{delta0}
We assume
$$\delta\in[0,\delta_0],$$ where $\delta_0$ is given by Proposition \ref{bV}.
\suspend{enumerate}

Another hypothesis concerns the convexity of $H$ in $p$. We suppose:
\resume{enumerate}
\item
\label{conv}
$H$ is uniformly convex in $p$: there exists $\kappa>0$ such that for all $(x,p,m,V)\in\mathbb{T}^d\times\mathbb{R}^d\times\mathcal{P}^{ac}(\mathbb{T}^d)\times  \chi(\mathbb{T}^d)$
\begin{equation*}
D_{pp}^2H(x,p,m,V)\geq\kappa I,
\end{equation*} where $I$ is the identity matrix in $\Rr^d$.
\suspend{enumerate}
Set
\begin{equation*}
\label{Hx}
\widehat{H}_{x}=D_{x}(H(x,p,m,V)+g(m(x))),
\end{equation*}
\begin{equation*}
\label{Hxx}
\widehat{H}_{xx}=D_{xx}(H(x,p,m,V)+g(m(x))),
\end{equation*}
\begin{equation}
\label{Hxp}
\widehat{H}_{xp}=D_x(D_pH(x,p,m,V))
\end{equation}
then we require
\resume{enumerate}
\item\label{bdv}
For all $(x,p,m,V)\in\mathbb{T}^d\times\mathbb{R}^d\times \mathcal{P}^{ac}(\mathbb{T}^d)\times  \chi(\mathbb{T}^d)$
\begin{equation*}
|\widehat{H}_{xx}(x,p,m,V)|\leq C+CH_0(x,p,m,V)+\delta \int_{\mathbb{T}^d} |V|^2\dm.
\end{equation*}
\item
\label{bdv1}
For all $(x,p,m,V)\in\mathbb{T}^d\times\mathbb{R}^d\times \mathcal{P}^{ac}(\mathbb{T}^d)\times  \chi(\mathbb{T}^d)$
\begin{equation*}
|\widehat{H}_{xp}(x,p,m,V)|^2\leq C+CH_0(x,p,m,V)+\delta \int_{\mathbb{T}^d} |V|^2\dm.
\end{equation*}



\item
\label{bdv2}
There exists $0\leq \beta<2$ such that for all $(x,p,m,V)\in\mathbb{T}^d\times\mathbb{R}^d\times \mathcal{P}^{ac}(\mathbb{T}^d)\times  \chi(\mathbb{T}^d)$
\begin{equation}
|\widehat{H}_{x}(x,p,m,V)|\leq C+C|p|^\beta.
\end{equation}


\suspend{enumerate}

\section{Elementary estimates}
\label{elmes}
In this section we are going to prove various a-priori estimates for the solutions to the
quasivariational stationary extended mean-field game equation \eqref{main}. Hereafter, by a solution to \eqref{main} we mean a classical solution, with $m>0$. Later we will use these estimates to prove the existence of smooth solutions to \eqref{main} by the continuation method.
These estimates can be regarded as the analog for stationary problems as the
estimates for the time-dependent case in \cite{ll3}. Our presentation will be based upon
the ideas and techniques from \cite{GMP}
with some modifications to allow for the dependence of $H$ on $V$.

\begin{pro}
\label{hbar} Assume (A\ref{qzv})-(A\ref{bbl1}). Let
$(u,m,V,\overline{H})$ solve system \eqref{main}, then there exists  $C>0$  such that
\begin{equation*}
\left|\int_{\mathbb{T}^d}g(m)\dx\right|,
\left|\int_{\mathbb{T}^d}mg(m)\dx\right|\leq C+C\delta \int_{\mathbb{T}^d} |V|^2\dm,
\end{equation*}
and
\begin{equation}
\label{hbdd}
|\overline{H}|\leq C+C\delta \int_{\mathbb{T}^d} |V|^2\dm.
\end{equation}

\end{pro}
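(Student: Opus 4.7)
The plan is to obtain two different expressions for $\overline{H}$ by testing the Hamilton--Jacobi equation against $1$ and against $m$, then combine them with the structural assumptions (A\ref{qzv}), (A\ref{pbbl}), (A\ref{bbl1}) to derive a bound on $\int(m-1)g(m)\,dx$, and finally exploit the monotonicity of $g$ from (A\ref{g}) to split this combined bound into the separate estimates claimed.

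First I would integrate the first equation of \eqref{main} over $\mathbb{T}^d$. Since $\int_{\mathbb{T}^d}\Delta u\,dx=0$, this yields
\[
\overline{H}=\int_{\mathbb{T}^d} H(x,Du,m,V)\,dx.
\]
Next I would multiply the first equation by $m$, integrate, and use integration by parts together with the Fokker--Planck equation $\Delta m=\div(Vm)$ to get $\int m\Delta u\,dx=\int u\Delta m\,dx=-\int Du\cdot V\,m\,dx$. Using $V=D_pH$ and the definition \eqref{lagr} of $L$, this gives the dual identity
\[
\overline{H}=-\int_{\mathbb{T}^d} m\,L(x,Du,m,V)\,dx.
\]
Now apply (A\ref{qzv}) to the first identity, obtaining $\int H_0\,dx-\int g(m)\,dx-C\leq\overline{H}\leq \int H_0\,dx-\int g(m)\,dx+C$, and apply (A\ref{bbl1}) to the second to get $\overline{H}\leq -c\int mH_0\,dx-\int mg(m)\,dx+C+\delta\int|V|^2\,dm$ (using $\int m\,dx=1$). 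Combining the lower bound from the first with this upper bound yields
\[
\int H_0\,dx+c\int mH_0\,dx+\int(m-1)g(m)\,dx\leq 2C+\delta\int_{\mathbb{T}^d}|V|^2\,dm,
\]
and the pointwise lower bound $H_0\geq -C-C\delta\int|V|^2\,dm$ coming from (A\ref{pbbl}) lets me discard the $H_0$ terms to conclude
\[
\int_{\mathbb{T}^d}(m-1)g(m)\,dx\leq C+C\delta\int_{\mathbb{T}^d}|V|^2\,dm.
\]

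The main obstacle is then to split this combined inequality into two one-sided bounds, for which I would use the monotonicity of $g$: since $(m-1)(g(m)-g(1))\geq 0$ pointwise and $\int(m-1)\,dx=0$, one has $\int(m-1)g(m)\,dx\geq 0$. In case (A\ref{g}\ref{lnm}), Jensen's inequality applied to $\int m\,dx=1$ gives $\int m\ln m\,dx\geq 0$ and $-\int\ln m\,dx\geq 0$, and since the sum of these two nonnegative quantities equals $\int(m-1)\ln m\,dx$, each is individually controlled by $C+C\delta\int|V|^2\,dm$. In case (A\ref{g}\ref{mgama}), both integrals are already nonnegative and Young's inequality $m^\gamma\leq\epsilon m^{1+\gamma}+C_\epsilon$ plugged into $\int m^{1+\gamma}-\int m^\gamma\leq C+C\delta\int|V|^2\,dm$ gives, after absorbing a small fraction of $\int m^{1+\gamma}$, the desired bounds on both $\int m^\gamma\,dx$ and $\int m^{1+\gamma}\,dx$. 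Finally, \eqref{hbdd} follows by re-inserting these bounds into $\overline{H}=\int H\,dx$ via (A\ref{qzv}) and using the pointwise lower bound on $H_0$ from (A\ref{pbbl}) once more to control $\int H_0\,dx$ from below; the corresponding upper bound on $\overline H$ comes from the dual identity and the easy lower bound $\int mg(m)\,dx\geq g(1)$.
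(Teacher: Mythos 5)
Your proof is correct and follows essentially the same route as the paper: both test the Hamilton--Jacobi equation against $1$ and against $m$, use (A1), (A3), (A5) to reduce to the single bound $\int(m-1)g(m)\,dx\leq C+C\delta\int|V|^2\,dm$, and then split it into the two claimed estimates. The only (immaterial) difference is in the splitting step, where you invoke Jensen's inequality and the sign of $\int(m-1)g(m)\,dx$ while the paper uses the pointwise inequalities $mg(m)\geq -C$ and $g(m)\leq\tfrac12 mg(m)+C$.
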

\begin{proof}
 Multiplying the first equation of \eqref{main} by $m$ and integrating by parts, we get:
\begin{equation}
\label{hleq}
\begin{split}
\overline{H} & =\int\limits_{\mathbb{T}^d}\Delta u  +H(x,Du,m,V)\dm \\&=\int\limits_{\mathbb{T}^d} H(x,Du,m,V) -VDu  \dm \\&= -\int\limits_{\mathbb{T}^d}L(x,Du,m,V)\dm\\&
\leq -\int\limits_{\mathbb{T}^d}\left[cH_0(x,Du,m,V)+g(m)\right]dm+\delta \int_{\mathbb{T}^d} |V|^2\dm+C\\&
\leq -\int\limits_{\mathbb{T}^d}g(m)\dm +C\delta \int_{\mathbb{T}^d} |V|^2\dm+C,
\end{split}
\end{equation}
where recall that $L$ is given by \eqref{lagr} and we used  (A\ref{bbl1}), (A\ref{pbbl}) and (A\ref{g}).

For the opposite inequality, from the first equation in  \eqref{main}, (A\ref{qzv}) and (A\ref{pbbl}) we have
$$
\overline{H}\geq \Delta u(x)-g(m)-C-\delta \int_{\mathbb{T}^d} |V|^2\dm.
$$
Integrating in $x$ we get
\begin{equation}
\label{hgeq}
 \overline{H}\geq-\int\limits_{\mathbb{T}^d}g(m)\dx
-C-\delta \int_{\mathbb{T}^d} |V|^2\dm.
\end{equation}
Combining \eqref{hleq} and \eqref{hgeq} we obtain
\[
\int\limits_{\mathbb{T}^d}mg(m)\dx\leq \int\limits_{\mathbb{T}^d}g(m)\dx+C\delta \int_{\mathbb{T}^d} |V|^2\dm+C,
\]
now by noting that for both  Assumptions (A\ref{g}\ref{lnm} and (A\ref{g}\ref{mgama} we have
$mg(m)\geq -C$ and $g(m)\leq \frac{1}{2}mg(m)+C$ for some constant $C>0,$ we conclude
\[
\left|\int_{\mathbb{T}^d}g(m)\dx\right|,
\left|\int_{\mathbb{T}^d}mg(m)\dx\right|\leq C+C\delta \int_{\mathbb{T}^d} |V|^2\dm,
\]
plugging these in \eqref{hleq} and \eqref{hgeq} yields \eqref{hbdd}.

\end{proof}

\begin{cor}
\label{hox}Assume (A\ref{qzv})-(A\ref{bbl1}). Let
$(u,m,V,\overline{H})$ solve the system \eqref{main},  then there exists  $C>0$  such that

$$\int_{\mathbb{T}^d} H_0(x,Du,m, V) \dx\leq C+ C\delta \int_{\mathbb{T}^d} |V|^2 \dm.$$
\end{cor}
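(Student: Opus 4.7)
The plan is to derive the bound directly from the first equation of \eqref{main} together with the quasi-variational structure (A\ref{qzv}) and the estimates already proved in Proposition \ref{hbar}.

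First I would use the PDE $\Delta u + H(x,Du,m,V) = \overline{H}$ to rewrite $H$ as $\overline{H} - \Delta u$. Assumption (A\ref{qzv}) gives the pointwise inequality
\[
H_0(x,Du,m,V) \leq H(x,Du,m,V) + g(m(x)) + C = \overline{H} - \Delta u + g(m) + C.
\]
Integrating over $\mathbb{T}^d$ and using that $\int_{\mathbb{T}^d} \Delta u \, dx = 0$ by periodicity, this yields
\[
\int_{\mathbb{T}^d} H_0(x,Du,m,V) \, dx \leq \overline{H} + \int_{\mathbb{T}^d} g(m) \, dx + C.
\]

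The key step is then to invoke Proposition \ref{hbar}, which provides precisely the two bounds
\[
|\overline{H}| \leq C + C\delta \int_{\mathbb{T}^d} |V|^2 \, dm, \qquad \left|\int_{\mathbb{T}^d} g(m)\, dx \right| \leq C + C\delta \int_{\mathbb{T}^d} |V|^2 \, dm.
\]
Combining these with the integrated inequality immediately gives the desired estimate.

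I do not anticipate any serious obstacles here: the whole argument is just an integration of the HJB equation against $1$ combined with the already established control on $\overline{H}$ and on $\int g(m)\, dx$. The only subtlety worth checking is that the sign in (A\ref{qzv}) is used in the correct direction to produce an upper bound on $H_0$ (rather than on $-H_0$), and that the additional constant and the $\delta\int|V|^2\,dm$ term absorb into the stated right-hand side with the same structure.
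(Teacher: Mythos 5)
Your proposal is correct and coincides with the paper's own one-line proof: integrate the first equation of \eqref{main}, use (A\ref{qzv}) to bound $H_0$ by $H+g(m)+C$, and then invoke Proposition \ref{hbar} for $\overline{H}$ and $\int_{\mathbb{T}^d} g(m)\,dx$. The sign in (A\ref{qzv}) is indeed used in the right direction, since $|H-H_0+g(m)|\leq C$ gives $H_0\leq H+g(m)+C$.
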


\begin{proof}From the first equation in \eqref{main}, (A\ref{qzv}),  Proposition \ref{hbar}, we have
$$
\int_{\mathbb{T}^d} H_0(x,Du,m, V)\dx\leq\overline{H}+\int_{\mathbb{T}^d}g(m)\dx +C\leq C+C\delta \int_{\mathbb{T}^d} |V|^2\dm.
$$
\end{proof}

\begin{cor}
\label{h0m}
Assume (A\ref{qzv})-(A\ref{bbl1}). Let
$(u,m,V,\overline{H})$ solve the system \eqref{main},  then there exists  $C>0$  such that
\begin{equation}
\label{dubd}
\int\limits_{\mathbb{T}^d}H_0(x,Du,m, V)\dm \leq C+C\delta \int_{\mathbb{T}^d} |V|^2\dm.
\end{equation}

\end{cor}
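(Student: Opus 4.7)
The plan is to extract the bound directly from the chain of inequalities already produced inside the proof of Proposition \ref{hbar}. Recall that from multiplying the Hamilton--Jacobi equation by $m$, integrating by parts against the Kolmogorov equation, and invoking (A\ref{bbl1}), one arrives at
\[
\overline{H} \leq -\int_{\mathbb{T}^d}\bigl[c H_0(x,Du,m,V)+g(m)\bigr]\,dm + \delta\int_{\mathbb{T}^d}|V|^2\,dm + C,
\]
which is exactly line four of \eqref{hleq} before $H_0$ was discarded. So the first step is simply to rewrite this inequality as an upper bound on $c\int H_0(x,Du,m,V)\,dm$.

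The second step is to control the two terms on the right-hand side using Proposition \ref{hbar}. The $\overline{H}$ contribution is bounded by $C + C\delta\int|V|^2\,dm$ via \eqref{hbdd}, while $\int m\, g(m)\,dx$ is controlled in absolute value by the same quantity. Combining these gives
\[
c\int_{\mathbb{T}^d}H_0(x,Du,m,V)\,dm \leq C + C\delta\int_{\mathbb{T}^d}|V|^2\,dm,
\]
and dividing by $c>0$ (which is admissible since $c$ is a fixed constant from (A\ref{bbl1})) yields the corollary.

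There is no real obstacle here: the inequality is essentially a byproduct of the estimate already carried out for $\overline{H}$, one just refrains from throwing away the $H_0$ term. The only small care needed is to verify that the sign of $\int g(m)\,dm$ works in our favour, which is guaranteed under both cases of (A\ref{g}) by the remark used at the end of the proof of Proposition \ref{hbar} that $m g(m)\geq -C$. Once that is in place the estimate is immediate.
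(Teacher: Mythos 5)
Your proof is correct and follows essentially the same route as the paper: both retain the $cH_0$ term in the inequality $\overline{H}\leq -\int[cH_0+g(m)]\,dm+\delta\int|V|^2\,dm+C$ from the proof of Proposition \ref{hbar}, then bound $\overline{H}$ and $\int mg(m)\,dx$ using that proposition. The closing remark about the sign of $\int g(m)\,dm$ is harmless but unnecessary, since Proposition \ref{hbar} already controls $\left|\int_{\mathbb{T}^d} mg(m)\,dx\right|$ in absolute value.
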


\begin{proof} As in the proof of Proposition \ref{hbdd}, we have
$$
\overline{H}=-\int\limits_{\mathbb{T}^d}L(x,Du,m,V)\dm\leq C-\int_{\mathbb{T}^d}g(m)\dm-C\int\limits_{\mathbb{T}^d}H_0(x,Du,m, V)\dm
+\delta \int_{\mathbb{T}^d} |V|^2\dm,
$$
From this, using the bounds  from Proposition \ref{hbar} we end the proof.
\end{proof}

\begin{pro}
\label{sqrtm}
Assume (A\ref{qzv})-(A\ref{qbd}) and let
$(u,m,V,\overline{H})$ solve the system \eqref{main}. Then there exists  $C>0$  such that

\begin{equation*}
\|\sqrt{m}\|_{H^1}\leq C+C\delta \int_{\mathbb{T}^d} |V|^2\dm.
\end{equation*}

\end{pro}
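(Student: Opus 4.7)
The plan is to test the Fokker--Planck equation (the second line of \eqref{main}) against $\log m$ to extract the Fisher information, and then cast the result in the stated form using $(A\ref{qbd})$ together with Corollary \ref{h0m}. Because $m$ is a smooth, strictly positive classical solution on the torus, multiplying $\Delta m-\div(Vm)=0$ by $\log m$ and integrating by parts is legitimate and yields the identity
\[
\int_{\mathbb{T}^d}\frac{|Dm|^2}{m}\,dx \;=\; \int_{\mathbb{T}^d} Dm\cdot V\,dx.
\]

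Next I would split the right-hand side as $Dm\cdot V = (Dm/\sqrt m)\,(\sqrt m\,V)$ and apply Young's inequality, which lets me absorb $\tfrac12\int|Dm|^2/m\,dx$ back into the left-hand side and conclude
\[
\int_{\mathbb{T}^d}\frac{|Dm|^2}{m}\,dx \;\le\; \int_{\mathbb{T}^d}|V|^2\,dm.
\]
Since $|D\sqrt m|^2=|Dm|^2/(4m)$ and $\int m\,dx=1$, this already produces the core estimate $\|\sqrt m\|_{H^1}^2 \le 1 + \tfrac14\int|V|^2\,dm$.

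To reach exactly the form stated in the proposition, I would insert $p=Du$ and $V=D_pH(x,Du,m,V)$ into (A\ref{qbd}) to get the pointwise bound $|V|^2\le C+CH_0+\delta\int|V|^2\,dm$; integrating against $dm$ (using $\int dm=1$) and invoking Corollary \ref{h0m} on the $\int H_0\,dm$ term yields
\[
\int_{\mathbb{T}^d}|V|^2\,dm \;\le\; C + C\delta\int_{\mathbb{T}^d}|V|^2\,dm.
\]
Substituting this back into the preceding $H^1$ bound, and passing from a squared to a linear estimate via $\sqrt{a+b}\le\sqrt a+\sqrt b$ with a routine readjustment of constants, delivers the claim.

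No serious obstacle is expected: the mechanism is the standard entropy-dissipation identity obtained by using $\log m$ as a test function in Fokker--Planck, and everything else is bookkeeping that reuses the bounds already proved earlier in the section. The only point worth care is that one must not prove $\int |V|^2\,dm$ is bounded at this stage (that is deferred to Proposition \ref{bV}); keeping the term on the right-hand side with the $\delta$ prefactor is precisely what permits the later absorption.
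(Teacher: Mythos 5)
Your proof is correct and follows essentially the same route as the paper: test the Fokker--Planck equation with $\ln m$, absorb via Young's inequality to bound the Fisher information by $\int |D_pH|^2\,dm=\int|V|^2\,dm$, then control that term through (A4)$'$s quadratic bound together with Corollary \ref{h0m}, keeping the $\delta\int|V|^2\,dm$ term on the right rather than closing it. The only cosmetic difference is that the paper writes the right-hand side as $\int Dm\cdot D_pH\,dx$ while you use $V$ directly; these coincide by the third equation of \eqref{main}.
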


\begin{proof}
 Multiplying the second equation of \eqref{main} by $\ln m$ and integrating by parts we get the estimate
$$
4\int _{\mathbb{T}^d}|D\sqrt{m}|^2\dx=\int _{\mathbb{T}^d}\frac{|Dm|^2}{m}\emph{d}x=\int _{\mathbb{T}^d}Dm D_pH\dx\leq \frac{1}{2}\int_{\mathbb{T}^d}\frac{|Dm|^2}{m}\dx+
\frac{1}{2}\int _{\mathbb{T}^d}|D_pH|^2\dm.
$$
Hence
$$\int _{\mathbb{T}^d}|D\sqrt{m}|^2\dx\leq C\int _{\mathbb{T}^d}|D_pH|^2\dm.$$
The result then follows from (A\ref{qbd}),  Corollary \ref{h0m} and $\int_{\mathbb{T}^d} m\dx=1$.
\end{proof}

\begin{pro}
\label{bV}
Assume (A\ref{qzv})-(A\ref{qbd}). Let
$(u,m,V,\overline{H})$ solve system \eqref{main}. Then, there exist $C,\,\delta_0>0$ such that  for any $\delta\in[0,\delta_0]$ we have \[\int_{\mathbb{T}^d} |V|^2\emph{d}m\leq C.\]
\end{pro}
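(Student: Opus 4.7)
The plan is to close a Gronwall-type inequality on the quantity $I := \int_{\mathbb{T}^d} |V|^2 \, dm$ by exploiting the fact that the third equation of \eqref{main} identifies $V$ pointwise with $D_p H(x, Du, m, V)$, and then applying the growth hypothesis (A\ref{qbd}) together with the already-established integral bound from Corollary \ref{h0m}.

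Concretely, first I would use $V(x) = D_p H(x, Du(x), m, V)$ and assumption (A\ref{qbd}) to obtain the pointwise inequality
\[
|V(x)|^2 = |D_p H(x, Du(x), m, V)|^2 \leq C + C\, H_0(x, Du(x), m, V) + \delta \int_{\mathbb{T}^d} |V|^2 \, dm.
\]
Then I would integrate this inequality against the probability measure $m$ (using $\int_{\mathbb{T}^d} dm = 1$) to get
\[
\int_{\mathbb{T}^d} |V|^2 \, dm \leq C + C \int_{\mathbb{T}^d} H_0(x, Du, m, V) \, dm + \delta \int_{\mathbb{T}^d} |V|^2 \, dm.
\]

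Next I would invoke Corollary \ref{h0m}, which furnishes $\int H_0(x, Du, m, V) \, dm \leq C + C\delta \int |V|^2 \, dm$, and substitute it into the previous line. This yields an inequality of the form
\[
I \leq C_1 + C_2 \, \delta \, I,
\]
with constants $C_1, C_2 > 0$ depending only on the structural constants in (A\ref{qzv})--(A\ref{qbd}), but not on $\delta$ or on the particular solution. The conclusion follows by choosing $\delta_0$ so that $C_2 \delta_0 \leq \tfrac{1}{2}$ (for instance); for every $\delta \in [0, \delta_0]$ the $\delta I$ term can be absorbed into the left-hand side, giving $I \leq 2 C_1 =: C$.

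The only delicate point — and the one I would be most careful about — is tracking the constants through each application so that the threshold $\delta_0$ can be chosen \emph{a priori}, independently of the (unknown) solution; in particular one should verify that the constants appearing in Corollary \ref{h0m} and Proposition \ref{hbar} have this property, which they do since their proofs only use (A\ref{qzv})--(A\ref{bbl1}) and integration against $m$ with $\int m = 1$. Note also that this proposition is precisely what justifies hypothesis (A\ref{delta0}) in the rest of the paper, so it is essential that $\delta_0$ be an absolute threshold rather than one depending on the solution.
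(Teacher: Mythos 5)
Your argument is exactly the paper's proof: substitute $V=D_pH$ into (A\ref{qbd}), integrate against $m$, invoke Corollary \ref{h0m}, and absorb the $\delta\int|V|^2\,dm$ term for $\delta$ small. Your additional remark about checking that the constants are structural (independent of the solution) is correct and consistent with how the paper uses $\delta_0$ in (A\ref{delta0}).
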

\begin{proof}
Using the last equation of \eqref{main}, (A\ref{qbd}) and Corollary \ref{h0m}, we get
$$
\int_{\mathbb{T}^d} |V|^2\emph{d}m=\int_{\mathbb{T}^d}|D_pH|^2dm\leq C+C\int _{\mathbb{T}^d}H_0dm +\delta \int_{\mathbb{T}^d} |V|^2\emph{d}m\leq C+C\delta \int _{\mathbb{T}^d}|V|^2\emph{d}m,
$$
 For small $\delta$ this gives the result.
\end{proof}

Combining Proposition \ref{bV}
with Propositions \ref{hbar}, \ref{sqrtm}  and Corollaries \ref{hox}, \ref{h0m} we get:
\begin{cor}
\label{ABC} Assume (A\ref{qzv})-(A\ref{delta0}). Let
$(u,m,V,\overline{H})$ solve system \eqref{main}. Then, there exists $C>0$ such that
\begin{equation}
\label{intmgmbound}
\left|\int_{\mathbb{T}^d}g(m)\dx\right|,
\left|\int_{\mathbb{T}^d}mg(m)\dx\right|\leq C,
\end{equation}
\begin{equation}\label{Hbarbound}
|\overline{H}|\leq C,
\end{equation}
\begin{equation}\label{H_0dxbound}
 \left|\int_{\mathbb{T}^d} H_0(x,Du,m,V)\dx\right|\leq C, \quad \text{and so}\quad \|Du\|_{L^2(\Tt^d)}\leq C,
 \end{equation}
 \begin{equation}\label{H_0dmbound}
  \left|\int_{\mathbb{T}^d} H_0(x,Du,m,V)\dm\right|\leq C,
  \end{equation}
   \begin{equation}\label{sqrtmbound}
\|\sqrt{m}\|_{H^1(\mathbb{T}^d)}\leq C.
\end{equation}
\end{cor}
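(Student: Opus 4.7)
The plan is essentially to substitute the uniform bound $\int_{\mathbb{T}^d}|V|^2\,dm \leq C$ provided by Proposition \ref{bV} (valid once $\delta \in [0,\delta_0]$, which is precisely Assumption (A\ref{delta0})) into each of the previously established $\delta$-dependent estimates. So no new analytic input is needed; the corollary is a bookkeeping step.

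First I would invoke Proposition \ref{bV} to fix the number $\int_{\mathbb{T}^d}|V|^2\,dm \leq C$. Plugging this into the conclusion of Proposition \ref{hbar} immediately yields \eqref{intmgmbound} and \eqref{Hbarbound}. Similarly, inserting the $V$-bound into Corollary \ref{hox} gives $\int_{\mathbb{T}^d} H_0(x,Du,m,V)\,dx \leq C$, and into Corollary \ref{h0m} gives \eqref{H_0dmbound}. Finally, the same substitution in Proposition \ref{sqrtm} produces \eqref{sqrtmbound}.

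The only estimate in the statement that is not literally one of the previous propositions with $\int|V|^2\,dm$ replaced by $C$ is the $L^2$ bound on $Du$ asserted at the end of \eqref{H_0dxbound}. For this I would use Assumption (A\ref{pbbl}), which gives
\begin{equation*}
|Du(x)|^2 \leq C + C H_0(x,Du,m,V) + \delta \int_{\mathbb{T}^d}|V|^2\,dm.
\end{equation*}
Integrating over $\mathbb{T}^d$ and using the bound on $\int H_0\,dx$ just obtained, together with $\int|V|^2\,dm \leq C$, one gets $\|Du\|_{L^2(\mathbb{T}^d)}^2 \leq C$.

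There is no main obstacle: every ingredient is already in place, and the proof amounts to chaining the five previous results through the single uniform estimate furnished by Proposition \ref{bV}. The only point worth stating carefully is that the conclusion of Corollary \ref{hox} yields an upper bound on $\int H_0\,dx$, whereas (A\ref{pbbl}) shows $H_0$ is bounded below (by $c^{-1}(|p|^2 - C - \delta \int|V|^2\,dm)$), so the one-sided bound is automatically two-sided modulo constants, which is why we can write $|\int H_0\,dx| \leq C$ in \eqref{H_0dxbound} and analogously for \eqref{H_0dmbound}.
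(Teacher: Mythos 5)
Your proposal is correct and is exactly the paper's argument: the paper proves this corollary in one line by "combining Proposition \ref{bV} with Propositions \ref{hbar}, \ref{sqrtm} and Corollaries \ref{hox}, \ref{h0m}", i.e.\ by substituting the uniform bound $\int_{\mathbb{T}^d}|V|^2\,dm\leq C$ into the earlier $\delta$-dependent estimates. Your added remarks on deriving $\|Du\|_{L^2}\leq C$ from (A3) and on the lower bound for $H_0$ making the one-sided estimates two-sided are details the paper leaves implicit, and they are right.
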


\begin{pro}
\label{D2u}
Assume (A\ref{qzv})-(A\ref{bdv1}) and let
$(u,m,V,\overline{H})$ solve system \eqref{main}.
 Then, there exists  $C>0$  such that
\begin{equation}
\label{g'D2u}
\int_{\mathbb{T}^d} g'(m)|Dm|^2 dx\leq C, \qquad \int_{\mathbb{T}^d}|D^2 u|^2dm\leq C.
\end{equation}
\end{pro}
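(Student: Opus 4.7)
The plan is to derive both estimates from integral identities obtained by testing the Hamilton--Jacobi and Fokker--Planck equations in \eqref{main} against suitable multipliers, and then combining with the controls from Corollary \ref{ABC}.

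For the first estimate, I would multiply the Fokker--Planck equation by $g(m)$. Integration by parts (using $Dg(m)=g'(m)Dm$) gives
\[
\int_{\Tt^d} g'(m)|Dm|^2\,dx = \int_{\Tt^d} g'(m)\,m\,Dm\cdot V\,dx,
\]
and Young's inequality then yields $\int g'(m)|Dm|^2\,dx\le\int g'(m)m^2|V|^2\,dx$. In case (A\ref{g}\ref{lnm}) the right-hand side is $\int|V|^2\,dm$, bounded by Proposition \ref{bV}. In case (A\ref{g}\ref{mgama}) it equals $\gamma\int m^{\gamma+1}|V|^2\,dx$, which I would bound by invoking (A\ref{qbd}) to replace $|V|^2$ by $C+CH_0+\delta\int|V|^2\,dm$ and combining the controls $\int m^{\gamma+1}\,dx\le C$ (which follows from $\int m\,g(m)\,dx\le C$ in Corollary \ref{ABC}), $\int H_0\,dm\le C$, and $\int|V|^2\,dm\le C$.

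For the second estimate, I would use a Bochner-type identity. Writing $H=\widehat H-g(m)$ via (A\ref{qzv})--(A\ref{difp}) so that $D_p\widehat H=V$, differentiating the HJ equation in $x_k$ yields
\[
\Delta u_{x_k}+\widehat H_{x_k}+V\cdot Du_{x_k}-g'(m)\,m_{x_k}=0,\qquad k=1,\ldots,d.
\]
Multiplying by $u_{x_k}$, summing in $k$, and integrating against $dm$, I would apply the Bochner identity $Du\cdot\Delta(Du)=\Delta(|Du|^2/2)-|D^2u|^2$ to the principal term. The resulting combination $\int\Delta(|Du|^2/2)\,dm+\int V\cdot D(|Du|^2/2)\,dm$ vanishes, since it coincides with the Fokker--Planck equation tested against $\phi=|Du|^2/2$, leaving the clean identity
\[
\int_{\Tt^d}|D^2u|^2\,dm \;=\; \int_{\Tt^d}\widehat H_x\cdot Du\,dm \;-\; \int_{\Tt^d} g'(m)\,Dm\cdot Du\,dm.
\]
The second right-hand-side term is controlled by Cauchy--Schwarz together with the first estimate above and $\int|Du|^2\,dm\le C$ from Corollary \ref{ABC}. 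For the first right-hand-side term I would exploit periodicity: since $\widehat H$ is $\Zz^d$-periodic in $x$, the mean of $\widehat H_x(\cdot,p,m,V)$ vanishes for each fixed $(p,m,V)$, so a Poincar\'e-type argument combined with (A\ref{bdv})--(A\ref{bdv1}) and $\int H_0\,dm\le C$ produces an $L^2(dm)$-bound on $\widehat H_x$, closing the estimate by Cauchy--Schwarz.

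The main obstacle is the term $\int\widehat H_x\cdot Du\,dm$, since no hypothesis in (A\ref{qzv})--(A\ref{bdv1}) controls $\widehat H_x$ directly; the argument must exploit periodicity in $x$ together with (A\ref{bdv})--(A\ref{bdv1}) and the controls from Corollary \ref{ABC} in order to extract the needed pointwise bound from the available second-derivative information.
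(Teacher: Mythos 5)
Your route differs from the paper's in a way that matters: the paper applies the full Laplacian to the Hamilton--Jacobi equation and integrates against $m$, so that the only $x$-derivatives of $\widehat H$ that appear are $\widehat H_{xx}$ and $\widehat H_{xp}$ --- exactly the quantities controlled by (A\ref{bdv}) and (A\ref{bdv1}) --- while the uniform convexity (A\ref{conv}) generates the coercive term $\kappa\int |D^2u|^2\,dm$ and the divergence term produces $\int g'(m)|Dm|^2\,dx$, giving both estimates at once. Your first-order Bochner identity instead produces the term $\int \widehat H_x\cdot Du\,dm$, and, as you yourself note, none of (A\ref{qzv})--(A\ref{bdv1}) controls $\widehat H_x$ (the hypothesis that does, (A\ref{bdv2}), is not assumed in this proposition). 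The proposed fix does not work: the zero mean of $x\mapsto \widehat H_x(x,p,m,V)$ for \emph{fixed} $p$ combined with (A\ref{bdv}) would at best give $|\widehat H_x(x,p,\cdot,\cdot)|\le C+C\sup_y H_0(y,p,\cdot,\cdot)$, and $\sup_y H_0(y,p,\cdot,\cdot)$ is not comparable to $H_0(x,p,\cdot,\cdot)$ under the stated assumptions; moreover, along the solution $p=Du(x)$ varies, so any Poincar\'e-type reconstruction of $\widehat H_x$ from $\widehat H_{xx}$ and $\widehat H_{xp}$ reintroduces $D^2u$, i.e.\ the quantity you are trying to bound. This step is a genuine gap, not a technicality.

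There is a second, independent gap in the power case (A\ref{g}\ref{mgama}. In your first estimate the substitution $|V|^2\le C+CH_0+\delta\int|V|^2\,dm$ leaves the term $\int m^{\gamma+1}H_0\,dx=\int m^{\gamma}H_0\,dm$, which is \emph{not} controlled by $\int H_0\,dm\le C$ since no $L^\infty$ (or even sufficiently high $L^q$) bound on $m$ is available at this stage; likewise the term $\int g'(m)m^2|Du|^2\,dx=\gamma\int m^{\gamma}|Du|^2\,dm$ in your second estimate is not controlled by $\int|Du|^2\,dm\le C$. The integrability of $m^{2\gamma+1}$ and of $H_0^2\,dm$ needed to close these terms is established only \emph{after} Proposition \ref{D2u} (Corollary \ref{greg} and Proposition \ref{mintg} use \eqref{g'D2u} as input), so invoking them here would be circular. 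The logarithmic case of your first estimate, where $g'(m)m^2=m$, is fine and is in fact the standard energy identity; but to prove the proposition as stated you should follow the second-order strategy: differentiate twice (or apply $\Delta$), integrate against $m$, use the adjoint structure to kill $\int(\Delta^2u+D_pH\cdot D\Delta u)\,dm$, and absorb the $\widehat H_{xp}\colon D^2u$ cross term into $\kappa\int|D^2u|^2\,dm$ by Cauchy--Schwarz.
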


\begin{proof}
Applying the operator $\Delta$ on the first equation of \eqref{main} we obtain
\[
\Delta^2u+\widehat{H}_{x_ix_i}+2\widehat{H}_{p_kx_i}(x,Du,m,V)u_{x_kx_i}+
Tr(D^2_{pp}H(x,Du,m,V)(D^2u)^2)+
\]
\[
D_pH(x,Du,m,V)D\Delta u-\div(g'(m)Dm)=0.
\]
Integrating with respect to $m$ we get
$$
\int_{\mathbb{T}^d} g'(m)|Dm|^2\dx+\frac{\kappa}{2}\int _{\mathbb{T}^d} |D^2u|^2\emph{d}m\leq\int_{\mathbb{T}^d}|\widehat{H}_{x_ix_i}|\dm+C|\widehat{H}_{xp}|^2\dm\leq C+C\delta \int_{\mathbb{T}^d} |V|^2\emph{d}m,
$$
where in the last inequality we used (A\ref{bdv}) and (A\ref{bdv1}), then the Proposition \ref{bV} finishes the proof.
\end{proof}

\begin{cor}
\label{greglog}
Assume (A\ref{qzv})-(A\ref{bdv1}) and (A\ref{g}\ref{lnm},  and let
$(u,m,V,\overline{H})$ solve system  \eqref{main}. Then, there exists  $C>0$  such that
\begin{equation}
\label{m2*log}
\int_{\mathbb{T}^d} m^{\frac{2^*}{2}}\dx\leq C,
\end{equation} and
\begin{equation}
\label{Du4log}
\int_{\mathbb{T}^d}H_0(x, Du, m, V)^2 \dm\leq C, \quad \text{and so}\quad
\int_{\mathbb{T}^d}|Du|^4\dm\leq C.
\end{equation}
\end{cor}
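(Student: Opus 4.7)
The first estimate is a direct consequence of the uniform $H^1$ bound on $\sqrt{m}$ from Corollary \ref{ABC}: the Sobolev embedding $H^1(\mathbb{T}^d)\hookrightarrow L^{2^*}(\mathbb{T}^d)$ gives $\sqrt{m}\in L^{2^*}$, and squaring yields $m\in L^{2^*/2}$ with a uniform bound. This step does not actually rely on the specific form of $g$.

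For the second estimate I would exploit the first PDE in \eqref{main} together with (A\ref{qzv}) specialized to $g(m)=\ln m$. Substituting $H(x,Du,m,V)=\overline{H}-\Delta u$ into (A\ref{qzv}) gives the pointwise identity
\[
H_0(x,Du,m,V)=\overline{H}-\Delta u+\ln m+R,\qquad |R|\leq C.
\]
Squaring and integrating against $m$:
\[
\int_{\mathbb{T}^d} H_0^{\,2}\dm\leq C\Bigl(1+|\overline{H}|^2+\int_{\mathbb{T}^d}|\Delta u|^2\dm+\int_{\mathbb{T}^d}(\ln m)^2\dm\Bigr).
\]
The first two terms are controlled by \eqref{Hbarbound}; the $|\Delta u|^2$ term by $|\Delta u|^2\leq d\,|D^2u|^2$ and the bound on $\int|D^2u|^2\dm$ from Proposition \ref{D2u}. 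For the logarithmic term I would split the integral over $\{m\leq 1\}$, where $m(\ln m)^2$ is bounded as a function of $m$ alone, and $\{m>1\}$, where for any $\epsilon>0$ one has $(\ln m)^2 m\leq C_\epsilon m^{1+\epsilon}$; choosing $\epsilon$ so that $1+\epsilon\leq 2^*/2$ and invoking the first part \eqref{m2*log} produces a uniform bound. This is the one step where the logarithmic hypothesis (A\ref{g}\ref{lnm}) really enters.

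Finally, for the $L^4(\dm)$ bound on $Du$ I would invoke (A\ref{pbbl}), which reads $|Du|^2\leq C+CH_0+\delta\int_{\mathbb{T}^d}|V|^2\dm$. Squaring this inequality, integrating against $m$, and using Proposition \ref{bV} to bound $\int|V|^2\dm$ (and hence its square) together with the bound on $\int H_0^{\,2}\dm$ just established gives $\int|Du|^4\dm\leq C$. The only genuine obstacle I anticipate is the control of the logarithmic term $\int(\ln m)^2\dm$, which is why the argument is tied to the log case and the Sobolev embedding from the first part; the extension to power nonlinearities $g(m)=m^\gamma$ requires a different, dimension-dependent treatment and is therefore handled separately in subsequent sections.
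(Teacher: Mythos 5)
Your proposal is correct and follows essentially the same route as the paper: \eqref{m2*log} via Sobolev embedding from the $H^1$ bound on $\sqrt{m}$ in \eqref{sqrtmbound}, then the pointwise relation $H_0=C-\Delta u+\ln m$ (up to bounded terms) from the first equation of \eqref{main} and (A\ref{qzv}), squared and integrated against $m$, with $\int(\ln m)^2\dm$ controlled by $m\in L^{2^*/2}$ and $\int|\Delta u|^2\dm$ by Proposition \ref{D2u}, and finally (A\ref{pbbl}) together with Proposition \ref{bV} to pass to $\int|Du|^4\dm$. Your explicit splitting of $\int(\ln m)^2\dm$ over $\{m\le 1\}$ and $\{m>1\}$ just spells out the step the paper compresses into the remark that $2^*/2>1$ implies $\int g(m)^2\dm\le C$.
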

\begin{proof} From Corollary \ref{ABC}, estimate \eqref{sqrtmbound}, we know that $m^\frac{1}{2}\in H^1(\mathbb{T}^d)$. Sobolev's Theorem then implies \eqref{m2*log}.
Since $\frac{2^*}{2}>1$, from  \eqref{m2*log} we deduce in particular that, under (A\ref{g}\ref{lnm}
\begin{equation}\label{gm^2mlog}\int_{\mathbb{T}^d} g(m)^2dm\leq C.\end{equation}
Now, using (A\ref{qzv}) and  \eqref{Hbarbound}, we have
\[
H_0(x,Du,m, V)\leq C-\Delta u +g(m).
\]
Then by (A\ref{pbbl}), Proposition \ref{D2u} and \eqref{gm^2mlog}, we get
\[
\int_{\mathbb{T}^d}|Du|^4dm\leq C+C\int_{\mathbb{T}^d} H_0^2dm\leq C+ C \int_{\mathbb{T}^d} g(m)^2dm+C\int_{\mathbb{T}^d} |D^2u|^2\dm\leq C.
\]
\end{proof}

\begin{cor}
\label{greg}
Assume (A\ref{qzv})-(A\ref{bdv1}), and (A\ref{g}\ref{mgama}. Let
$(u,m,V,\overline{H})$ solve system \eqref{main}. Then, there exists  $C>0$  such that
\begin{equation}
\label{m2*}
\int_{\mathbb{T}^d} m^{\frac{2^*}{2}(\gamma +1)}\dx\leq C.
\end{equation}
Furthermore, if $2\gamma+1\leq{\frac{2^*}{2}(\gamma +1)}$, then
\begin{equation}
\label{Du4}
\int_{\mathbb{T}^d}H_0(x, Du, m, V)^2 \dm\leq C, \quad \text{and so}\quad
\int_{\mathbb{T}^d}|Du|^4\dm\leq C.
\end{equation}
\end{cor}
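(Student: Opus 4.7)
The plan is to leverage Proposition \ref{D2u} to get an $H^1$-seminorm bound on $m^{(\gamma+1)/2}$, upgrade this to an $L^{2^*}$ bound by extracting the missing $L^1$ control from the moment estimate \eqref{intmgmbound}, and then use the first equation of \eqref{main} to pass to an $L^2(\dm)$ bound on $H_0$ exactly as in Corollary \ref{greglog}.

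First, since $g(m)=m^\gamma$,
\[
g'(m)|Dm|^2=\gamma m^{\gamma-1}|Dm|^2=\frac{4\gamma}{(\gamma+1)^2}\bigl|D(m^{(\gamma+1)/2})\bigr|^2,
\]
so estimate \eqref{g'D2u} yields $\int_{\mathbb{T}^d}|D(m^{(\gamma+1)/2})|^2\dx\le C$. Next, since $mg(m)=m^{\gamma+1}\ge 0$ for $\gamma>0$, estimate \eqref{intmgmbound} gives $\int_{\mathbb{T}^d}m^{\gamma+1}\dx\le C$, and Cauchy--Schwarz on the unit-measure torus then produces
\[
\int_{\mathbb{T}^d}m^{(\gamma+1)/2}\dx\le\left(\int_{\mathbb{T}^d}m^{\gamma+1}\dx\right)^{1/2}\le C.
\]
Combining this $L^1$ bound with the $H^1$-seminorm bound, the Sobolev--Poincaré inequality on $\mathbb{T}^d$ delivers $\|m^{(\gamma+1)/2}\|_{L^{2^*}}\le C$, which is precisely \eqref{m2*}.

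For \eqref{Du4}, I would mimic the proof of Corollary \ref{greglog} with power-type estimates in place of logarithmic ones. The assumption $2\gamma+1\le \tfrac{2^*}{2}(\gamma+1)$ together with \eqref{m2*} yields, via Hölder on the finite-measure space $\mathbb{T}^d$,
\[
\int_{\mathbb{T}^d}m^{2\gamma+1}\dx\le\left(\int_{\mathbb{T}^d}m^{\frac{2^*}{2}(\gamma+1)}\dx\right)^{(2\gamma+1)/(\frac{2^*}{2}(\gamma+1))}\le C.
\]
From (A\ref{qzv}), the first equation of \eqref{main}, and the bound \eqref{Hbarbound} on $\overline{H}$, one obtains the pointwise upper bound $H_0(x,Du,m,V)\le -\Delta u+m^\gamma+C$, while (A\ref{pbbl}) together with Proposition \ref{bV} provides the pointwise lower bound $H_0\ge -C$; combined these give $H_0^2\le C(1+|D^2u|^2+m^{2\gamma})$. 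Integrating against $\dm$ and using Proposition \ref{D2u} for the Hessian term and the previous $L^{2\gamma+1}(\dx)$ bound for the power term (written against $\dm$ as an $L^{2\gamma+1}$ integral in $\dx$) gives $\int_{\mathbb{T}^d}H_0^2\dm\le C$. Finally, (A\ref{pbbl}) combined with Proposition \ref{bV} yields $|Du|^2\le C(1+H_0)$, hence $|Du|^4\le C(1+H_0^2)$; integrating against $\dm$ and using $\int\dm=1$ finishes the argument.

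The main subtle point I foresee is the $L^1$ bound on $m^{(\gamma+1)/2}$ in the range $\gamma>1$: the trivial bound $\int m\dx=1$ no longer controls this integral (since $t\mapsto t^{(\gamma+1)/2}$ is convex), and one genuinely needs the improved integrability $\int m^{\gamma+1}\dx\le C$ coming from \eqref{intmgmbound}.
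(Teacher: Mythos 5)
Your proposal is correct and follows essentially the same route as the paper: an $H^1$ bound on $m^{(\gamma+1)/2}$ from Proposition \ref{D2u} and \eqref{intmgmbound}, upgraded to $L^{2^*}$ by Sobolev, and then the pointwise bound $H_0\le C-\Delta u+g(m)$ squared and integrated against $\dm$ using Proposition \ref{D2u} and the $L^{2\gamma+1}$ bound on $m$. The only cosmetic differences (passing through the $L^1$ norm of $m^{(\gamma+1)/2}$ rather than its $L^2$ norm, and using H\"older instead of the pointwise inequality $m^{2\gamma+1}\le C+Cm^{\frac{2^*}{2}(\gamma+1)}$) do not change the argument.
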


\begin{proof}
Let $f(x)=m^{\frac{\gamma +1}{2}}(x)$. From \eqref{intmgmbound} we have
\[
\int_{\mathbb{T}^d} f^2\dx= \int_{\mathbb{T}^d} mg(m)\dx\leq C.
\]
Note that
\[
\int_{\mathbb{T}^d} |Df|^2\dx=C\int_{\mathbb{T}^d} m^{\gamma-1}|Dm|^2\dx=C\int_{\mathbb{T}^d} g'(m)|Dm|^2\dx\leq C.
\]
Thus
\begin{equation*}
\|f\|_{H^1}\leq C.
\end{equation*}
The Sobolev inequality then implies
\[
\|f\|_{L^{2^*}}\leq C\|f\|_{H^1}\leq C,
\]
which proves \eqref{m2*}.
 In particular, if $2\gamma+1\leq{\frac{2^*}{2}(\gamma +1)}$, then
\begin{equation}\label{intgmm}\int_{\mathbb{T}^d} g(m)^2 m\dx=\int_{\mathbb{T}^d} m^{2\gamma+1}dx\leq C+C\int_{\mathbb{T}^d} m^{\frac{2^*}{2}(\gamma +1)}dx=C+C\int_{\mathbb{T}^d} f^{2^*} \dx\leq C.\end{equation}
Using (A\ref{qzv}) and  \eqref{Hbarbound}, we have
\[
H_0(x,Du,m, V)\leq C-\Delta u +g(m).
\]
Then by (A\ref{pbbl}), Proposition \ref{D2u} and \eqref{intgmm}
\[
\int_{\mathbb{T}^d}|Du|^4dm\leq C+C\int_{\mathbb{T}^d} H_0^2dm\leq C+ C \int_{\mathbb{T}^d} g(m)^2dm+C\int_{\mathbb{T}^d}
|D^2u|^2\dm\leq C.
\]
\end{proof}

\section{Additional integrability properties}
\label{fintp}
In this section we continue the study of various a-priori estimates, focusing our attention in $L^q$ estimates for $m$ as well as $\ln m$. This in
particular, see Theorem \ref{t4p6}, yields $W^{2,q}$ estimates for $u$.

\begin{pro}
\label{mintg}
Assume (A\ref{qzv})-(A\ref{bdv1}) and let
$(u,m,V,\overline{H})$ solve system \eqref{main}. Furthermore, suppose that  one of the following assumptions is satisfied:
\begin{itemize}
\item[(i)] (A\ref{g}\ref{lnm};
\item[(ii)]  (A\ref{g}\ref{mgama}, with $2\gamma+1\leq{\frac{2^*}{2}(\gamma +1)}$.
\end{itemize}

Then there exists a constant $C>0$, such that
\begin{equation*} \|m\|_{ L^q(\mathbb{T}^d)}\leq C\quad\forall\, 1<q<\infty, \quad \text{if }d\le 4,\end{equation*}
\begin{equation*} \|m\|_{ L^q(\mathbb{T}^d)}\leq C\quad\forall\, 1<q< \frac{d}{d-4},\quad\text{if }d\ge5.\end{equation*}

\end{pro}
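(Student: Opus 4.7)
\medskip
\noindent\textbf{Plan of proof.} The plan is to run a Moser-style iteration driven by the Fokker--Planck equation (the second equation of \eqref{main}, which reads $\Delta m = \operatorname{div}(Vm)$). For $q>1$, I would multiply by $m^{q-1}$, integrate by parts on both sides, and observe that the common factor $q-1$ cancels, leaving the identity
\begin{equation*}
\int_{\mathbb{T}^d} |D(m^{q/2})|^2 \, dx \;=\; \frac{q}{2}\int_{\mathbb{T}^d} V \cdot m^{q/2}\, D(m^{q/2})\, dx.
\end{equation*}
Cauchy--Schwarz on the right then gives $\int |D(m^{q/2})|^2\,dx \le \tfrac{q^{2}}{4}\int |V|^{2} m^{q}\,dx$, and combining this with the Sobolev embedding $H^{1}(\mathbb{T}^d) \hookrightarrow L^{2^{*}}(\mathbb{T}^d)$ applied to $f = m^{q/2}$ produces
\begin{equation*}
\|m\|_{L^{qd/(d-2)}}^{q} \;\le\; C_{q}\!\int_{\mathbb{T}^d}|V|^{2}m^{q}\,dx \;+\; C\|m\|_{L^{q}}^{q}.
\end{equation*}

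\medskip
The next step is to absorb the $|V|$-term. From (A\ref{qbd}) together with Proposition \ref{bV} one has $|V|^{2} \le C(1+H_{0})$, hence $|V|^{4}\le C(1+H_{0}^{2})$; thus the bound $\int H_{0}^{2}\,dm \le C$ supplied by Corollary \ref{greglog} in case (i), and by Corollary \ref{greg} (precisely under the hypothesis $2\gamma+1 \le \tfrac{2^{*}}{2}(\gamma+1)$) in case (ii), gives $\int |V|^{4}\,dm \le C$. Splitting $m^{q}=m^{1/2}\cdot m^{q-1/2}$ and applying Cauchy--Schwarz yields
\begin{equation*}
\int_{\mathbb{T}^d}|V|^{2}m^{q}\,dx \;\le\; \Bigl(\int_{\mathbb{T}^d}|V|^{4} m\,dx\Bigr)^{\!1/2}\Bigl(\int_{\mathbb{T}^d} m^{2q-1}\,dx\Bigr)^{\!1/2} \;\le\; C\,\|m\|_{L^{2q-1}}^{q-1/2},
\end{equation*}
and I arrive at the key bootstrap inequality
\begin{equation*}
\|m\|_{L^{qd/(d-2)}}^{q} \;\le\; C\,\|m\|_{L^{2q-1}}^{q-1/2} \;+\; C\,\|m\|_{L^{q}}^{q}, \qquad q>1.
\end{equation*}

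\medskip
With this bootstrap in hand the iteration is straightforward. Corollary \ref{greglog}/\ref{greg} supplies the base bound $\|m\|_{L^{q_{0}}}\le C$, with $q_{0}=d/(d-2)$ in case (i) and $q_{0}=(\gamma+1)d/(d-2)$ in case (ii); in case (ii) the assumption $2\gamma+1 \le \tfrac{2^{*}}{2}(\gamma+1)$ is equivalent (for $d\ge 5$) to $q_{0}\le d/(d-4)$, which is exactly what makes the iteration start below its fixed point. Given $\|m\|_{L^{q_{k}}}\le C$, I apply the bootstrap with $q=(q_{k}+1)/2$, so that $2q-1=q_{k}$ and, since $\mathbb{T}^{d}$ has unit measure, $\|m\|_{L^{q}}\le \|m\|_{L^{q_{k}}}$ as well; this yields $\|m\|_{L^{q_{k+1}}}\le C'$ with
\begin{equation*}
q_{k+1} \;=\; \frac{d}{2(d-2)}\,(q_{k}+1).
\end{equation*}
The affine map $T(q)=\tfrac{d}{2(d-2)}(q+1)$ has unique fixed point $q^{*}=d/(d-4)$; for $d\ge 5$ one has $T(q)>q$ iff $q<q^{*}$, so $q_{k}\nearrow q^{*}$ and bounds in $L^{q}$ follow for every $q<d/(d-4)$, while for $d\le 4$ the multiplier $d/(2(d-2))\ge 1$ drives $q_{k}\to\infty$, giving bounds in every finite $L^{q}$.

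\medskip
The main obstacle is twofold: ensuring that the bootstrap closes cleanly on the $V$-terms, and that the iteration map's fixed point coincides exactly with the target exponent $d/(d-4)$. The higher integrability $\int |V|^{4}\,dm \le C$ (ultimately a consequence of $\int H_{0}^{2}\,dm\le C$) is what makes the Cauchy--Schwarz step above produce only the quantity $\|m\|_{L^{2q-1}}$, and the elementary algebraic fact that the fixed point of $T$ equals $d/(d-4)$ is what accounts for the precise threshold in the statement; the role of the hypothesis on $\gamma$ in case (ii) is exactly to place the starting exponent below this threshold so that the iteration can ascend toward it.
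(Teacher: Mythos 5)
Your proposal is correct and follows essentially the same route as the paper: the bound $\int_{\mathbb{T}^d}|V|^4\,dm\le C$ obtained from (A\ref{qbd}), Proposition \ref{bV} and Corollaries \ref{greglog}/\ref{greg}, testing the Fokker--Planck equation with powers of $m$, and then Sobolev embedding followed by iteration of the affine map $q\mapsto \tfrac{d}{2(d-2)}(q+1)$, whose fixed point $d/(d-4)$ gives the stated threshold. The only cosmetic difference is that you split $\int|V|^2m^q$ by two successive Cauchy--Schwarz steps where the paper uses a single three-factor Young inequality; the resulting bootstrap inequality and iteration are identical.
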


\begin{proof}
Assumption  (A\ref{qbd}),  \eqref{Du4log}, \eqref{Du4} and Proposition \ref{bV}  imply
\begin{equation}
\label{hp4m}
\int_{\mathbb{T}^d}|D_pH|^4m\dx\leq C.
\end{equation}
Multiply the second equation of \eqref{main} by $m^r$, $r>0$, and integrate by parts:
\begin{equation*} \int_{\mathbb{T}^d} m^{r-1}|Dm|^2-m^rD_pH\cdot Dmdx=0.\end{equation*}
Then, by Young's inequality

\begin{equation*}\begin{split}  \int_{\mathbb{T}^d} m^{r-1}|Dm|^2dx &\leq \int_{\mathbb{T}^d} m^r|D_pH||Dm|dx\\ &
=\int_{\mathbb{T}^d} |D_pH|m^\frac{1}{4}|Dm|m^\frac{r-1}{2}m^\frac{2r+1}{4}dx\\&
\leq \int_{\mathbb{T}^d}\frac{1}{4}|D_pH|^4m+\frac{1}{2}m^{r-1}|Dm|^2+\frac{1}{4}m^{2r+1}dx.\end{split}\end{equation*}
Estimate \eqref{hp4m} then implies
\begin{equation}\label{intmrDm} \int_{\mathbb{T}^d} m^{r-1}|Dm|^2dx\le C+C\int_{\mathbb{T}^d} m^{2r+1}dx.\end{equation}

Remark that $m^{r-1}|Dm|^2=c_r|Dm^{\frac{r+1}{2}}|^2$. By Sobolev's Theorem, if $m^{\frac{r+1}{2}}\in H^1(\mathbb{T}^d)$ then $m^{\frac{r+1}{2}}\in L^{2^*}(\mathbb{T}^d)$ and
\begin{equation*} \left(\int_{\mathbb{T}^d} m^{\frac{2^*}{2}(r+1)}\right)^\frac{2}{2^*}\leq C\int_{\mathbb{T}^d} c_r m^{r-1}|Dm|^2+m^{r+1}dx,\end{equation*} where $2^*=\frac{2d}{d-2}.$ Then we have
\begin{equation}\label{prop1ineq} \left(\int_{\mathbb{T}^d} m^{\frac{2^*}{2}(r+1)}\right)^\frac{2}{2^*}\leq C+C\int_{\mathbb{T}^d} m^{2r+1}dx.\end{equation}

Now, if $d\le 4$ then $\frac{2^*}{2}(r+1)> 2r+1$ for any $r>0$, while if $d\geq 5$ the inequality is true if $r< \frac{2}{d-4}$. Under these assumptions on $r$, and $\int_{\mathbb{T}^d} m^\frac{2^*}{2}dx\leq C$, iterating
\eqref{prop1ineq}  we conclude that $m\in L^q$ for any $q>1$ if $d\le 4$. If $d\ge 5$ then
$$\int_{\mathbb{T}^d} m^{\frac{2^*}{2}(r+1)}dx\le C$$ for any $r< \frac{2}{d-4}$, i.e., $m\in L^q$ for any $1<q< \frac{d}{d-4}$.

\end{proof}

\begin{pro}\label{lminh1} Assume  (A\ref{qzv})-(A\ref{bdv1}).
Let
$(u,m,V,\overline{H})$ solve system \eqref{main}. Then
\[
\int_{\mathbb{T}^d}|D\ln m|^2\dx\leq C.
\]
Furthermore if (A\ref{g}\ref{lnm} holds
then there exists a constant $C>0,$ such that
$\|\ln m\|_{ H^1(\mathbb{T}^d)}\leq C.$
\end{pro}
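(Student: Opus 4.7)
The plan is to use $1/m$ as a test function in the transport equation, which is the natural multiplier to produce $|D\ln m|^2$ rather than the weighted quantity $m|D\ln m|^2$ that Proposition~\ref{sqrtm} already controls. Multiplying the second equation of \eqref{main} by $1/m$ and integrating by parts on the torus gives
\[
\int_{\mathbb{T}^d} |D\ln m|^2 \, dx \;=\; \int_{\mathbb{T}^d} V\cdot D\ln m\, dx,
\]
after using $D(1/m) = -Dm/m^2$ and $|Dm|^2/m^2 = |D\ln m|^2$. Because $m$ is a classical solution with $m>0$ on the compact torus $\mathbb{T}^d$, $1/m$ is a legitimate smooth test function, so the integrations by parts are justified.

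Next I would apply Young's inequality to absorb the gradient term on the right:
\[
\int_{\mathbb{T}^d} V\cdot D\ln m\, dx \;\leq\; \tfrac12\int_{\mathbb{T}^d}|D\ln m|^2\, dx + \tfrac12\int_{\mathbb{T}^d}|V|^2\, dx,
\]
yielding $\int |D\ln m|^2 \, dx \leq \int |V|^2 \, dx$. The main point is then to bound $\int_{\mathbb{T}^d}|V|^2\, dx$ (a Lebesgue, not a weighted, integral). Using the third equation $V=D_pH(x,Du,m,V)$ together with assumption (A\ref{qbd}) pointwise, then integrating over $\mathbb{T}^d$, one gets
\[
\int_{\mathbb{T}^d}|V|^2\, dx \;\leq\; C + C\int_{\mathbb{T}^d} H_0(x,Du,m,V)\, dx + \delta\int_{\mathbb{T}^d}|V|^2\, dm.
\]
Both of the last two integrals are already controlled: the first by Corollary~\ref{hox} (in the form of \eqref{H_0dxbound}) and the second by Proposition~\ref{bV} (which is available since we are working under (A\ref{delta0})). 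This closes the bound $\int_{\mathbb{T}^d}|D\ln m|^2\, dx \leq C$.

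For the second part, assuming (A\ref{g}\ref{lnm}), I would combine the gradient bound just obtained with the $L^1$ control of $\ln m$ supplied by \eqref{intmgmbound}, which gives $|\int_{\mathbb{T}^d}\ln m\, dx|\leq C$. Poincaré's inequality on $\mathbb{T}^d$ applied to $\ln m$ minus its mean then yields
\[
\|\ln m - \overline{\ln m}\|_{L^2(\mathbb{T}^d)} \;\leq\; C\,\|D\ln m\|_{L^2(\mathbb{T}^d)} \;\leq\; C,
\]
and combining with $|\overline{\ln m}|\leq C$ produces $\|\ln m\|_{L^2(\mathbb{T}^d)}\leq C$, hence $\|\ln m\|_{H^1(\mathbb{T}^d)}\leq C$. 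There is no serious obstacle here; the only subtlety is selecting $1/m$ rather than $\ln m$ as the test multiplier, and then handling the $\int|V|^2\, dx$ term, for which (A\ref{qbd}) together with the already-established corollaries of Section~\ref{elmes} is exactly what is needed.
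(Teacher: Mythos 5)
Your proposal is correct and follows essentially the same route as the paper: multiply the Fokker--Planck equation by $1/m$, integrate by parts, absorb the cross term by Young's inequality, and control the resulting $\int|D_pH|^2\,dx$ via (A5), \eqref{H_0dxbound} and Proposition \ref{bV}, then conclude with Poincar\'e. The only (harmless) difference is that you obtain $|\int_{\mathbb{T}^d}\ln m\,dx|\le C$ directly from \eqref{intmgmbound}, whereas the paper rederives it from Jensen's inequality and the first equation of \eqref{main}.
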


\begin{proof}
Multiplying the second equation in \eqref{main} by $\frac{1}{m}$ and integrating by parts as in the proof of Proposition \ref{sqrtm} we get
\[
\int_{\mathbb{T}^d}|D\ln m|^2\dx\leq C\int_{\mathbb{T}^d}|D_pH(Du,x,m,V)|^2\dx\leq C+C\int_{\mathbb{T}^d}H_0\dx\leq C,
\]
using Assumption  (A\ref{qbd}), and \eqref{H_0dxbound}.

Now assume (A\ref{g}\ref{lnm} holds.
Integrating the first equation in \eqref{main},  using the Jensen's inequality, (A\ref{qzv})  and Corollary \ref{ABC}, we have

\[
0\geq\int_{\mathbb{T}^d}\ln m\dx\geq-C+\int_{\mathbb{T}^d} H_0(Du,x,m,V)\dx\geq-C.
\]
Therefore
\[
\left|\int_{\mathbb{T}^d}\ln m\dx\right|\leq C.
\]
Then by the Poincar\'{e} inequality
\[
\int_{\mathbb{T}^d}|\ln m|^2\dx\leq\left(\int_{\mathbb{T}^d}\ln m\dx\right)^2+\int_{\mathbb{T}^d}|D\ln m|^2\dx\leq C.
\]

\end{proof}

The previous proposition can in fact be improved as we show next.

\begin{pro}
\label{lnmreg}
Assume  (A\ref{qzv})-(A\ref{bdv1})  and (A\ref{g}\ref{lnm}.   Let
$(u,m,V,\overline{H})$ solve system \eqref{main}. Then, for every $1\leq p<\infty$,  there exists a constant $C_p>0$ such that
\[
\||\ln m|^p\|_{ H^1(\mathbb{T}^d)}\leq C_p.
\]
\end{pro}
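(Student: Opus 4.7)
The plan is to prove the result by induction on $p$ (taken as a positive integer, with the general case following by interpolation), using Proposition \ref{lminh1} as the base case $p=1$. For the inductive step, I will assume that $\||\ln m|^q\|_{H^1(\mathbb{T}^d)} \leq C$ for every $1 \leq q \leq p-1$ and deduce the corresponding estimate for $p$.

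The principal estimate comes from multiplying the Fokker--Planck equation (the second equation of \eqref{main}) by a primitive $\phi(m)$ satisfying $\phi'(m) = |\ln m|^{2p-2}/m^2$, integrating by parts, and applying Young's inequality, in direct analogy with the computation in Proposition \ref{lminh1}. This produces
$$\int_{\mathbb{T}^d} |\ln m|^{2p-2} |D\ln m|^2 \dx \leq C \int_{\mathbb{T}^d} |\ln m|^{2p-2} |V|^2 \dx.$$
Assumption (A\ref{qbd}) together with Proposition \ref{bV} yields $|V|^2 \leq C + CH_0$, reducing the problem to estimating $\int_{\mathbb{T}^d} H_0 |\ln m|^{2p-2} \dx$.

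This last integral is then controlled by testing the first equation of \eqref{main} against $|\ln m|^{2p-2}$: invoking (A\ref{qzv}) to write $H = H_0 - \ln m + O(1)$ and integrating by parts the Laplacian term leads to a cross term of the form
$$\int_{\mathbb{T}^d} Du \cdot D\ln m \cdot |\ln m|^{2p-3} \operatorname{sgn}(\ln m) \dx.$$
Cauchy--Schwarz splits this as $\bigl(\int |Du|^2 |\ln m|^{2p-4} \dx\bigr)^{1/2} \bigl(\int |D\ln m|^2 |\ln m|^{2p-2} \dx\bigr)^{1/2}$: the second factor is the quantity to be estimated, and is absorbed through Young's inequality; the first is bounded by (A\ref{pbbl}) (since $|Du|^2 \leq C + CH_0$), reducing to an integral of the same form at exponent $p - 1$ that is controlled by the inductive hypothesis. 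The $L^{2p}$-bound on $\ln m$ needed to complete the $H^1$ norm follows from Sobolev's embedding applied to $|\ln m|^{p-1} \in L^{2^*}(\mathbb{T}^d)$, itself furnished by the inductive hypothesis.

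The main obstacle will be to keep track of signs (since $\ln m$ is not signed on $\mathbb{T}^d$) in the various integrations by parts and to verify that the lower-order $L^q$-norms of $\ln m$ needed at each step really are delivered by the inductive hypothesis through Sobolev embedding. For small $p$ relative to $d$ this may force a Moser-type iteration, since each pass of the inductive step boosts the available integrability of $\ln m$ by a factor of $2^*/2 > 1$, so after finitely many iterations any prescribed exponent can be reached.
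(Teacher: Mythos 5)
Your proposal is correct and follows essentially the same route as the paper: an induction on the exponent in which the Fokker--Planck equation is tested against a primitive of $|\ln m|^{2p-2}/m^2$, and the resulting term $\int_{\mathbb{T}^d} H_0|\ln m|^{2p-2}\dx$ is controlled by testing the Hamilton--Jacobi equation against $|\ln m|^{2p-2}$, integrating the Laplacian by parts, and absorbing the cross term exactly as you describe. The one place the paper is slicker is the final $L^2$ bound: it increments the exponent by $1/2$ per step and recovers $\||\ln m|^{(l+2)/2}\|_{L^2}$ by Poincar\'e from the gradient bound plus the $L^1$ bound $|\ln m|^{(l+2)/2}\leq 1+|\ln m|^{l+1}$, which is dimension-independent and avoids the Sobolev/Moser iteration you (rightly) anticipate would be needed for small $p$ and large $d$.
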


\begin{proof}
We prove by induction that $f_k=|\ln m|^{\frac{k+1}{2}}\in H^1(\mathbb{T}^d)$, for any $k\in\Nn$. For $k=1$ this is  Proposition \ref{lminh1}.  Let $l\ge 1$  and suppose that
$\|f_k\|_{H^1(\mathbb{T}^d)}\leq C_l$ for all $k\leq l$, then we have
$$\|Df_k\|_{L^2}^2=\int_{\mathbb{T}^d}\frac{|\ln m|^{k-1}}{m^2}|Dm|^2\dx\leq C_l^2,$$
and
$$ \|f_k\|_{L^2}^2=\int_{\mathbb{T}^d}|\ln m|^{k+1}\dx\leq C_l^2.$$
We want to show that $f_{l+1}\in H^1(\mathbb{T}^d).$
Let $F_l(z)=\int_1^z\frac{|\ln y|^{l}}{y^2}\dy$ multiplying the second equation of \eqref{main} by $F_l(m)$ and integrating by parts we get
\[
\int_{\mathbb{T}^d}\frac{|\ln m|^{l}}{m^2}|Dm|^2\dx=\int_{\mathbb{T}^d}\frac{|\ln m|^{l}}{m}DmD_pH\dx\leq\frac{1}{2}\int_{\mathbb{T}^d}\frac{|\ln m|^{l}}{m^2}|Dm|^2\dx+\frac{1}{2}\int_{\mathbb{T}^d}|\ln m|^l|D_pH|^2\dx.
\]
Thus,
\begin{equation}
\label{dlnm}
\int_{\mathbb{T}^d}\frac{|\ln m|^l}{m^2}|Dm|^2\dx\leq\int_{\mathbb{T}^d}|\ln m|^l|D_pH|^2\dx\leq C\int_{\mathbb{T}^d}|\ln m|^l\dx+C\int_{\mathbb{T}^d}|\ln m|^lH_0\dx
\end{equation}
where at the last inequality we used (A\ref{qbd}) and Proposition \ref{bV}.

Next, from the first equation of \eqref{main},(A\ref{qzv}) and \eqref{Hbarbound}, we infer that
\[
H_0(x,Du,m,V)\leq C+|\ln m| -\Delta u.
\]
Then, multiplying by $|\ln m|^l$ and integrating
\[
\int_{\mathbb{T}^d}|\ln m|^lH_0\dx\leq C\int_{\mathbb{T}^d}|\ln m|^l\dx+ C \int_{\mathbb{T}^d}|\ln m|^{l+1}\dx -\int_{\mathbb{T}^d}\Delta u|\ln m|^l\dx.
\]
Integrating by parts the last term
\[
\int_{\mathbb{T}^d}|\ln m|^lH_0\dx\leq C\int_{\mathbb{T}^d}|\ln m|^l\dx+ C \int_{\mathbb{T}^d}|\ln m|^{l+1}\dx +\int_{\mathbb{T}^d}Du|\ln m|^{l-1}\frac{Dm}{m} sgn{(\ln m)}\dx.
\]
The integration by parts is justified, we just observe the that for a smooth function $f$ the identity $D(|f|^p)=p|f|^{p-2}sgn(f)Df$ holds both a.e. and in distribution sense.

Then,  using (A\ref{pbbl})

\begin{align*}
\int_{\mathbb{T}^d}|\ln m|^lH_0\dx&\leq C\int_{\mathbb{T}^d}|\ln m|^l\dx+ C \int_{\mathbb{T}^d}|\ln m|^{l+1}\dx+ C \int_{\mathbb{T}^d}|\ln m|^{l-1}\dx\\ &+C\int_{\mathbb{T}^d}|\ln m|^{l-1}H_0\dx+C\int_{\mathbb{T}^d}|\ln m|^{l-1}\frac{|Dm|^2}{m^2}\dx\\&
 \leq C+C \int_{\mathbb{T}^d}|\ln m|^{l+1}\dx\\ &+C\int_{\mathbb{T}^d}[\ep|\ln m|^{l}+C(\ep)]H_0\dx+C\int_{\mathbb{T}^d}|\ln m|^{l-1}\frac{|Dm|^2}{m^2}\dx,
\end{align*}
which yields
\begin{equation}
\label{hlnm}
\int_{\mathbb{T}^d}|\ln m|^lH_0\dx\leq  C \int_{\mathbb{T}^d}|\ln m|^{l+1}\dx +C\int_{\mathbb{T}^d}|\ln m|^{l-1}\frac{|Dm|^2}{m^2}\dx+C.
\end{equation}
Combining \eqref{dlnm} and \eqref{hlnm} we get
\[
\int_{\mathbb{T}^d}\frac{|\ln m|^l}{m^2}|Dm|^2\dx\leq  C \int_{\mathbb{T}^d}|\ln m|^{l+1}\dx +C\int_{\mathbb{T}^d}|\ln m|^{l-1}\frac{|Dm|^2}{m^2}\dx+C,
\]
that is,
\[
\|Df_{l+1}\|^2_{L^2}\leq C\|f_{l}\|^2_{L^2}+C\|Df_{l}\|^2_{L^2}+C\leq C_{l+1}.
\]
Since $|\ln m|^{l+1}=f_{l}^2\in{L^1}$ we have $f_{l+1}=|\ln m|^{\frac{l}{2}+1}\in L^1,$ then by the Poincar\'{e} inequality
\[
\|f_{l+1}\|^2_{L^2}\leq \|f_{l+1}\|_{L^1}^2+C\|Df_{l+1}\|^2_{ L^2}\leq C_{l+1},
\]
and this concludes the proof.
\end{proof}

\begin{cor}
\label{gmint}
Assume (A\ref{qzv})-(A\ref{bdv1}) and let
$(u,m,V,\overline{H})$ solve system \eqref{main}. Furthermore, suppose that  one of the following assumptions is satisfied:
\begin{itemize}
\item[(i)] (A\ref{g}\ref{lnm}
\item[(ii)]  (A\ref{g}\ref{mgama}, with any $\gamma>0$ if $d\leq 4,$ and $\gamma<\frac{1}{d-4}$ if $d\geq 5.$
\end{itemize}
Then there exist a constant $C>0$ and an exponent $r>d$ such that $\|g(m)\|_{L^r}$,
$\|D(g(m))\|_{L^2}\leq C.$
\end{cor}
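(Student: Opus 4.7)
The statement splits along the two cases of Assumption (A\ref{g}), and I would treat each separately.

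\emph{Case (i), $g(m) = \ln m$.} Both estimates are essentially already in hand. For the $L^r$ estimate with $r > d$, pick an integer $p \geq r/2$; Proposition \ref{lnmreg} supplies $\||\ln m|^p\|_{H^1(\mathbb{T}^d)} \leq C_p$, hence $\|\ln m\|_{L^{2p}} \leq C$ and consequently $\|\ln m\|_{L^r} \leq C$. The $L^2$ bound on $D\ln m = Dm/m$ is precisely Proposition \ref{lminh1}.

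\emph{Case (ii), $g(m) = m^\gamma$.} The $L^r$ bound follows at once from Proposition \ref{mintg}: the hypothesis $\gamma < 1/(d-4)$ for $d \geq 5$ gives $\gamma d < d/(d-4)$, so by continuity one may select $r > d$ with $\gamma r$ still in the admissible range for $\|m\|_{L^{\gamma r}}$, yielding $\|m^\gamma\|_{L^r}^r = \|m\|_{L^{\gamma r}}^{\gamma r} \leq C$; when $d \leq 4$ any $r$ works.

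For $\|D(m^\gamma)\|_{L^2}$, observe $|D(m^\gamma)|^2 = \gamma^2 m^{2\gamma - 2}|Dm|^2$. The case $\gamma = 1/2$ is immediate from Proposition \ref{sqrtm}. For $\gamma \neq 1/2$, I would multiply the second equation of \eqref{main} by the smooth test function $m^{2\gamma - 1}/(2\gamma - 1)$ and integrate by parts, obtaining
\[
\int_{\mathbb{T}^d} m^{2\gamma - 2}|Dm|^2\,dx = \int_{\mathbb{T}^d} V\cdot Dm\, m^{2\gamma - 1}\,dx,
\]
and then apply Cauchy--Schwarz to absorb the gradient, reducing matters to a bound on $\int_{\mathbb{T}^d}|V|^2 m^{2\gamma}\,dx$. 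I would bound this in two sub-cases. For $\gamma \leq 1/2$, the pointwise inequality $m^{2\gamma} \leq 1 + m$, together with $\int |V|^2\,dx \leq C$ (coming from $|V|^2 \leq C + CH_0$ via (A\ref{qbd}) and Proposition \ref{bV}, then Corollary \ref{hox}) and $\int |V|^2\,dm \leq C$ (Proposition \ref{bV}), closes the argument. For $\gamma > 1/2$, Cauchy--Schwarz as
\[
\int_{\mathbb{T}^d}|V|^2 m^{2\gamma}\,dx \leq \Bigl(\int_{\mathbb{T}^d}|V|^4 m\,dx\Bigr)^{1/2}\Bigl(\int_{\mathbb{T}^d} m^{4\gamma - 1}\,dx\Bigr)^{1/2}
\]
reduces it to \eqref{hp4m} for the first factor and Proposition \ref{mintg} for the second, since $\gamma < 1/(d-4)$ entails $4\gamma - 1 < d/(d-4)$.

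The main obstacle is that small $\gamma$ produces a negative exponent in the test function $m^{2\gamma - 1}$ and potentially in $m^{4\gamma - 1}$, which would be problematic absent a lower bound on $m$. Splitting at the threshold $\gamma = 1/2$ handles this cleanly: below the threshold the pointwise bound $m^{2\gamma} \leq 1 + m$ avoids negative exponents altogether, while above it the exponents $4\gamma - 1 > 1$ are firmly in the positive integrable range controlled by Proposition \ref{mintg}.
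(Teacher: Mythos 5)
Your proof is correct, and for case (i) and the $L^r$ bound in case (ii) it coincides with the paper's argument (Proposition \ref{lnmreg}, Proposition \ref{lminh1}, and Proposition \ref{mintg}, respectively; your exponent bookkeeping $\gamma r<\frac{d}{d-4}$ and $4\gamma-1<\frac{d}{d-4}$ checks out). Where you genuinely diverge is the bound on $\|D(m^\gamma)\|_{L^2}$ in case (ii). The paper splits at $\gamma=1$: for $0<\gamma\le 1$ it uses the pointwise interpolation $m^{2\gamma-2}\le Cm^{-2}+Cm^{\gamma-1}$ and then quotes two integral bounds already in hand, namely $\int m^{-2}|Dm|^2\le C$ (Proposition \ref{lminh1}) and $\int g'(m)|Dm|^2\le C$ (Proposition \ref{D2u}); for $\gamma>1$ it invokes \eqref{intmrDm} with $r=2\gamma-1$. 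You instead split at $\gamma=\tfrac12$ and rerun the energy estimate for the Fokker--Planck equation directly with the test function $m^{2\gamma-1}/(2\gamma-1)$. For $\gamma>\tfrac12$ your Cauchy--Schwarz against $\int|V|^4\,dm$ (i.e.\ \eqref{hp4m}) and $\int m^{4\gamma-1}\,dx$ is exactly the derivation of \eqref{intmrDm} with $r=2\gamma-1$, so there you have merely re-proved the paper's lemma rather than cited it. For $\gamma\le\tfrac12$ your route via $m^{2\gamma}\le 1+m$, $\int|V|^2\,dx\le C$ and Proposition \ref{bV} is new and sound (the bound $\int|V|^2\,dx\le C$ follows from (A\ref{qbd}), Proposition \ref{bV} and Corollary \ref{hox}, just as in the proof of Proposition \ref{lminh1}); the paper's pointwise-interpolation trick is slightly more economical since it recycles two already-established estimates with no new integration by parts, but yours has the mild advantage of not needing Proposition \ref{D2u} for this range of $\gamma$. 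One small remark on your closing paragraph: the negative exponent in the test function $m^{2\gamma-1}$ for $\gamma<\tfrac12$ is not actually an obstacle here, since solutions are by definition classical with $m>0$ on the compact torus (the paper itself tests against $m^{-r}$ and $m^{-r-1}$ in Proposition \ref{1/mintegrabgeneralprop}); what your threshold at $\gamma=\tfrac12$ really buys is keeping the exponent $4\gamma-1$ in the range where Proposition \ref{mintg} applies, and avoiding it entirely below the threshold.
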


\begin{proof}
The case (i) is a direct consequence of the Proposition \ref{lnmreg}. For the case (ii), note that the condition
 $2\gamma+1\leq{\frac{2^*}{2}(\gamma +1)}$ is satisfied then Proposition \ref{mintg} implies that $\|g(m)\|_{L^r}\leq C.$

To prove that $\|D(g(m))\|_{L^2}\leq C$ under (A\ref{g}\ref{mgama},
observe that
 \[
 \int_{\mathbb{T}^d}|D(g(m))|^2\dx=\gamma^2\int_{\mathbb{T}^d}m^{2\gamma-2}|Dm|^2\dx.
 \]
If $0<\gamma\leq 1$ then
there exists a constant $C$ such that
\[
m^{2\gamma-2}\leq C m^{-2}+Cm^{\gamma-1}.
\]
Therefore
\[
\int_{\mathbb{T}^d}|D(g(m))|^2\dx=C\int_{\mathbb{T}^d}m^{-2}|Dm|^2\dx+C\int_{\mathbb{T}^d}m^{\gamma-1}|Dm|^2\dx.
\]
The boundedness of the first term on the right hand side
follows from Proposition \ref{lminh1}, whereas the second term is controlled thanks to Proposition  \ref{D2u}.

If $\gamma>1$, from \eqref{intmrDm} in the proof of  Proposition \ref{mintg} and  the first estimate in the same Proposition we have
\[
\int_{\mathbb{T}^d} m^{r-1}|Dm|^2dx\le C+C\int_{\mathbb{T}^d} m^{2r+1}dx\leq C_r
\]
for any $r>0$, taking $r=2\gamma-1$ we get $\|D(g(m))\|_{L^2}\leq C$.
\end{proof}
\begin{pro}\label{1/mintegrabgeneralprop} Assume (A\ref{qzv})-(A\ref{bdv1})  and (A\ref{g}\ref{lnm}. Then there exists $r_0>0$ such that
\begin{equation}\label{h/mrintegr}\int_{\mathbb{T}^d} \frac{H_0}{m^{r_0}}dx\leq C,\end{equation} and
\begin{equation}\label{1/mrintegr}\int_{\mathbb{T}^d} \frac{1}{m^{r_0}}dx\leq C.\end{equation}
\end{pro}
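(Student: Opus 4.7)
The plan is to prove both bounds simultaneously via a coupled absorption argument for $A := \int_{\mathbb{T}^d} m^{-r_0}\,dx$ and $B := \int_{\mathbb{T}^d} H_0\, m^{-r_0}\,dx$, closing the system for $r_0>0$ sufficiently small. Since, for a fixed classical solution, $m$ is smooth and strictly positive, $A$ and $B$ are a priori finite, so the argument is really one of uniform control.

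The core auxiliary estimate comes from the Fokker--Planck equation: testing the second equation of \eqref{main} against $\varphi(m) = -\tfrac{1}{r_0+1}\,m^{-(r_0+1)}$ and integrating by parts yields
\[
\int_{\mathbb{T}^d} |\nabla \ln m|^2\, m^{-r_0}\,dx = \int_{\mathbb{T}^d} V\cdot \nabla \ln m\, m^{-r_0}\,dx,
\]
whence, by Cauchy--Schwarz and (A\ref{qbd}) combined with Proposition \ref{bV},
\[
\int_{\mathbb{T}^d} |\nabla \ln m|^2\, m^{-r_0}\,dx\le \int_{\mathbb{T}^d} |V|^2\, m^{-r_0}\,dx \le C(1+A+B).
\]

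Next, from (A\ref{qzv}) with (A\ref{g}\ref{lnm}) and the first equation of \eqref{main}, together with the lower bound $H_0\ge -K$ (from (A\ref{pbbl}) applied at $p = Du$ and Proposition \ref{bV}), one has the pointwise inequality $-\ln m\le C-\Delta u$. Multiplying by $m^{-r_0}$, integrating by parts the Laplacian, and applying Cauchy--Schwarz together with the Fokker--Planck bound above and the bound $\int |\nabla u|^2 m^{-r_0}\,dx \le C(1+A+B)$ (which follows from (A\ref{pbbl}) and Proposition \ref{bV}) yields
\[
\int_{\mathbb{T}^d}(-\ln m)\, m^{-r_0}\,dx \le CA + Cr_0 (1+A+B).
\]
To recover $A$ from this, I would split the domain at $\{m\le e^{-k}\}$: on the small-$m$ set $-\ln m\ge k$, so $m^{-r_0}\le k^{-1}(-\ln m)\,m^{-r_0}$, giving
\[
A \le e^{k r_0}|\mathbb{T}^d|+\frac{CA+Cr_0(1+A+B)}{k}.
\]
Choosing $k$ large enough absorbs $CA/k$; the residual $Cr_0 A/k$ and $Cr_0 B/k$ are controlled using a companion bound on $B$.

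A parallel manipulation of the HJB equation, this time using the pointwise upper bound $H_0\le C-\Delta u+\ln m$, the same Fokker--Planck and Cauchy--Schwarz estimates, and Young's inequality (or iteration in $r_0$) to handle $\int|\ln m|\,m^{-r_0}\,dx$, yields $B\le C(1+A)+Cr_0(1+A+B)$. Combining this with the companion inequality for $A$, for $r_0>0$ small enough the coupled system closes, yielding $A,B\le C$. The main obstacle is the simultaneous absorption for a common small $r_0$; the logarithmic structure of $g$ is essential here, as it produces exactly the $-\ln m$ term which, via the splitting trick, dominates $m^{-r_0}$ on the critical set and makes $A$ controllable from the HJB side.
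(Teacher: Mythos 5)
Your argument is correct and is essentially the paper's proof: you test the Hamilton--Jacobi equation against $m^{-r_0}$ and the Fokker--Planck equation against $m^{-(r_0+1)}$, control the cross terms $\int Du\cdot D(\ln m)\,m^{-r_0}dx$ and $\int V\cdot D(\ln m)\,m^{-r_0}dx$ via (A\ref{pbbl}), (A\ref{qbd}) and Proposition \ref{bV}, and close for small $r_0$ by exploiting that $-\ln m$ dominates $m^{-r_0}$ where $m$ is small. The only (cosmetic) difference is bookkeeping: you run a coupled absorption for $A$ and $B$ with a level-set splitting at $\{m\le e^{-k}\}$, whereas the paper adds the two tested identities with weight $r$ and concludes from the single pointwise bound $(\ln m+C)m^{-r_0}\le C$.
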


\proof
Multiplying  the first equation in  \eqref{main} by $\frac{1}{m^r}$, $r>0$,  integrating by parts and using  (A\ref{qzv}) and \eqref{Hbarbound}, we get
\begin{equation}
\label{h0mr}
\int_{\mathbb{T}^d}\frac{H_0}{m^r}dx\le\int_{\mathbb{T}^d} -r\frac{Du\cdot Dm}{m^{r+1}}+\frac{\ln m+C}{m^r}dx.\end{equation}
Next, multiplying  the second equation in  \eqref{main}  by $\frac{1}{m^{r+1}}$ and integrating by parts, we obtain
\begin{equation}
\label{dmmr}
\int_{\mathbb{T}^d}\frac{|Dm|^2}{m^{r+2}}dx=\int_{\mathbb{T}^d}\frac{D_p H\cdot Dm}{m^{r+1}}dx.
\end{equation}
Let us sum the equation \eqref{h0mr} and equation \eqref{dmmr} multiplied by $r$:
\begin{equation*}\begin{split} \int_{\mathbb{T}^d}\frac{H_0}{m^r}dx+r\int_{\mathbb{T}^d}\frac{|Dm|^2}{m^{r+2}}dx&\le\int_{\mathbb{T}^d} r\frac{(D_p H-Du)\cdot Dm}{m^{r+1}}+\frac{\ln m+C}{m^r}dx\\&
\le \int_{\mathbb{T}^d} Cr\frac{(|D_p H|+|Du|)|Dm|}{m^{r+1}}+\frac{\ln m+C}{m^r}dx\\&
\le \int_{\mathbb{T}^d} \frac{1}{2}\frac{H_0}{m^r}+Cr^2\frac{|Dm|^2}{m^{r+2}}+\frac{\ln m+C}{m^r}dx,
\end{split}\end{equation*} where we used  (A\ref{pbbl}) and (A\ref{qbd}).
Now, let $r_0>0$ small enough such that $r_0\leq  Cr_0^2$, then
\begin{equation*} \int_{\mathbb{T}^d} \frac{1}{2}\frac{H_0}{m^{r_0}}dx\le\int_{\mathbb{T}^d} \frac{\ln m+C}{m^{r_0}}dx\le C.\end{equation*}
and \eqref{h/mrintegr} is proven. Moreover, the previous inequalities and (A\ref{pbbl}) imply that
$$0\leq \int_{\mathbb{T}^d} \frac{\ln m+C}{m^{r_0}}dx\le C,$$ from which
 \eqref{1/mrintegr} follows.
\finedim

\begin{teo}
\label{t4p6}
Assume (A\ref{qzv})-(A\ref{bdv1})  and (A\ref{g}\ref{lnm}, then there exists $q>0$ and a constant $C>0$ such that
$$\|u\|_{ W^{2,1+q}(\mathbb{T}^d)}\leq C.$$
\end{teo}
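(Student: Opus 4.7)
Using (A\ref{qzv}) together with (A\ref{g}\ref{lnm}), the first equation of \eqref{main} rewrites as
\begin{equation*}
\Delta u \;=\; \overline{H} \;-\; H_0(x, Du, m, V) \;+\; \ln m \;+\; R, \qquad |R| \le C.
\end{equation*}
By Corollary \ref{ABC} one has $|\overline{H}| \le C$, and Proposition \ref{lnmreg} gives $\|\ln m\|_{L^{p}(\mathbb{T}^d)} \le C_p$ for every finite $p$. The whole task therefore reduces to establishing
\begin{equation*}
\|H_0(\cdot, Du, m, V)\|_{L^{1+q}(\mathbb{T}^d)} \;\le\; C
\end{equation*}
for some $q > 0$, after which Calder\'on--Zygmund regularity for the periodic Laplacian, combined with the $L^{2}$ bound on $Du$ from \eqref{H_0dxbound}, will deliver $\|u\|_{W^{2,1+q}} \le C$ (normalizing $u$ by its mean if necessary).

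The key step is a H\"older interpolation between the two a-priori estimates
\begin{equation*}
\int_{\mathbb{T}^d}\!\frac{H_0}{m^{r_{0}}}\, dx \;\le\; C \quad (\text{Proposition \ref{1/mintegrabgeneralprop}}), \qquad \int_{\mathbb{T}^d}\! H_0^{\,2}\, m\, dx \;\le\; C \quad (\text{Corollary \ref{greglog}}).
\end{equation*}
Assumption (A\ref{pbbl}) and Proposition \ref{bV} give a pointwise lower bound $H_0 \ge -C_{0}$, so one may replace $H_0$ by $\tilde H_0 := H_0 + C_{0} + 1 \ge 1$; the error is controlled by $\int m^{-r_{0}}\,dx \le C$ and $\int m\,dx = 1$. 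Writing
\begin{equation*}
\tilde H_0^{\,1+q} \;=\; \left(\frac{\tilde H_0}{m^{r_{0}}}\right)^{\!a} \left(\tilde H_0^{\,2}\,m\right)^{\!b},
\end{equation*}
matching the exponents of $\tilde H_0$ and $m$ gives $a+2b = 1+q$ and $b = a\, r_{0}$; imposing $a+b=1$ so as to apply H\"older with conjugate exponents $1/a$ and $1/b$ then forces $a = 1/(1+r_{0})$, $b = r_{0}/(1+r_{0})$, and $q = r_{0}/(1+r_{0}) > 0$. H\"older's inequality yields $\int_{\mathbb{T}^d} \tilde H_0^{\,1+q}\,dx \le C$, whence $\|H_0\|_{L^{1+q}} \le C$.

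Substituting back into the HJB equation produces $\|\Delta u\|_{L^{1+q}(\mathbb{T}^d)} \le C$, and periodic $L^p$ elliptic regularity concludes. The non-trivial inputs are the $1/m^{r_{0}}$ integrability of Proposition \ref{1/mintegrabgeneralprop} and the $L^{2}(dm)$ control of $H_0$ from Corollary \ref{greglog}; both are already at hand, so the only real difficulty of the present theorem is the correct choice of H\"older exponents that simultaneously cancels the $m$-weight and produces a power of $H_0$ strictly larger than $1$.
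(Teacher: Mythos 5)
Your proof is correct and follows essentially the same route as the paper: both reduce the theorem to an $L^{1+q}$ bound on $H_0$ via the first equation of \eqref{main} and obtain that bound by interpolating between $\int_{\mathbb{T}^d} H_0^2\,\dm\le C$ and $\int_{\mathbb{T}^d} H_0 m^{-r_0}\,\dx\le C$, finishing with Proposition \ref{lnmreg} and elliptic regularity. Your single H\"older step with exponents $a=1/(1+r_0)$, $b=r_0/(1+r_0)$ is merely a slightly streamlined packaging of the paper's two successive Young-type inequalities (which also pass through $\int_{\mathbb{T}^d} H_0\,\dx\le C$), and it has the minor bonus of producing the explicit value $q=r_0/(1+r_0)$.
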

\proof

Raise the inequality
$$|\Delta u|\le H_0(x,Du,m,V)+|\ln m|+C,$$ to the power $q+1$ with $0<q\le\frac{1}{2}$, and integrate:

\begin{equation}\label{thmw2preggener}\int_{\mathbb{T}^d} |\Delta u|^{q+1}dx\leq C_q\int_{\mathbb{T}^d} (H_0^{q+1}+|\ln m|^{q+1})dx+ C_q.\end{equation}
Next from the Young's inequality,  \eqref{Du4log}  and \eqref{H_0dxbound}, for any $0\le r\le1$, we have
\begin{equation*} \int_{\mathbb{T}^d} H_0^{1+r}m^rdx=\int_{\mathbb{T}^d} H_0^{2r}m^r H^{1-r}_0dx\le\int_{\mathbb{T}^d} r H_0^2m+(1-r)H_0dx\le C.\end{equation*}
Hence, we can estimate $\int_{\mathbb{T}^d} H_0^{q+1}dx$ as follows
\begin{equation}\label{thmw2preggener2} \int_{\mathbb{T}^d} H_0^{q+1}dx=\int_{\mathbb{T}^d} H_0^{q+\frac{1}{2}}m^q\frac{H_0^{\frac{1}{2}}}{m^q}dx\le \frac{1}{2}\int_{\mathbb{T}^d} H_0^{2q+1}m^{2q}+\frac{H_0}{m^{2q}}dx\leq C,\end{equation}
if $q\le \frac{r_0}{2}$, where $r_0$ is given by Proposition \ref{1/mintegrabgeneralprop}. Hence, from \eqref{thmw2preggener},  \eqref{thmw2preggener2} and Proposition
\ref{lnmreg},  there exists a $q>0$ such that $\|\Delta u\|_{L^{1+q}(\mathbb{T}^d)}\leq C$. The elliptic theory then implies that $\|u\|_{W^{2,1+q}(\mathbb{T}^d)}\leq C$.
\finedim


\section{Further estimates for special Hamiltonians}
\label{specl}
In this section we consider the equation \eqref{main} for a special class of Hamiltonians. We assume that $H$ satisfies the following hypothesis:
\begin{itemize}
\item[{\bf(H1)}]
There exist a function
\[
G\colon\mathbb{T}^d\times\mathbb{R}^d\times \mathcal{P}^{ac}(\mathbb{T}^d)\times \chi(\mathbb{T}^d)\to\Rr
\]
with \[|G(x,0,m,V)|,|D_pG(x,p,m,V)|^2, |D_xG(x,p,m,V)|\leq C+\epsilon\int_{\mathbb{T}^d} |V|^2\emph{d}m\] for some constants $C,\epsilon>0$,
and a twice continuously differentiable function
\[
\alpha\colon
\mathbb{T}^d\to\Rr,\quad\text{with }\alpha\geq\alpha_0>0,
\]
such that
\[
H(x,p,m,V)=H_0(x,p,m,V)-g(m),
\]
and
\[
H_0(x,p,m,V)=\alpha(x)\frac{|p|^2}{2}+G(x,p,m,V).
\]
\item [{\bf(H2)}]
$G$ is twice differentiable in $x, p$ with
\[
|D^2_{xp}G(x,p,m,V)|^2, |D^2_{xx}G(x,p,m,V)|, |D^2_{pp}G(x,p,m,V)|\leq C+\epsilon\int_{\mathbb{T}^d} |V|^2\emph{d}m,
\]
and additionally there exists $\kappa>0$ such that
\[
D^2_{pp}H(x,p,m,V)\geq \kappa I.
\]
\end{itemize}
It is easy to check that there exists a constant $\epsilon_0>0$ such that if (H1) holds true for $\epsilon\in[0,\epsilon_0]$, then  $H$ satisfies the Assumptions (A\ref{qzv})-(A\ref{delta0}), if further (H2) holds then $H$ also satisfies (A\ref{conv})-(A\ref{bdv1}).

\begin{teo}\label{1/mintegrab}Assume (H1) with $\epsilon\in[0,\epsilon_0]$ and  (A\ref{g}\ref{lnm}. Then for any solution $(u,m,V,\Hh)$ to \eqref{main}
\begin{equation*}\left\|\frac{1}{m}\right\|_{ L^\infty(\mathbb{T}^d)}\leq C.\end{equation*}
\end{teo}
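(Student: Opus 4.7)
The plan is to apply the maximum principle to $w := -\ln m$, exploiting the special structure $H = \alpha(x)|p|^2/2 + G - \ln m$ to substitute $\Delta u$ from the HJ equation and produce a pointwise bound on $w$.

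First, I would derive the PDE for $w$ from the Fokker-Planck equation. Dividing $\Delta m - \operatorname{div}(Vm) = 0$ by $m$ gives $\Delta m/m = V\cdot D\ln m + \operatorname{div} V$, and combining with $\Delta\ln m = \Delta m/m - |D\ln m|^2$ yields
\[
\Delta w = V\cdot Dw + |Dw|^2 - \operatorname{div} V.
\]
Next, since $V = \alpha Du + D_pG$ by (H1), I would compute $\operatorname{div} V = D\alpha\cdot Du + \alpha\Delta u + \operatorname{div}(D_pG)$ and substitute $\alpha\Delta u = -\alpha w + \alpha\overline{H} - \alpha^2|Du|^2/2 - \alpha G$ obtained from the HJ equation. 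This yields an equation of the form
\[
\Delta w - V\cdot Dw - |Dw|^2 - \alpha w = -\alpha\overline{H} + \alpha^2|Du|^2/2 + \alpha G - D\alpha\cdot Du - \operatorname{div}(D_pG).
\]

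Evaluating at a point $x_0$ where $w$ attains its maximum, the conditions $Dw(x_0)=0$ and $\Delta w(x_0)\leq 0$ give an upper bound on $\alpha(x_0)w(x_0)$. Using $\alpha\geq\alpha_0>0$, boundedness of $\overline{H}$ from Corollary \ref{ABC}, the bound $|D_pG|\leq C$ and hence $|G(x,p,m,V)|\leq C(1+|p|)$ obtained by integration along $p$ combined with Proposition \ref{bV}, $|D\alpha|\leq C$, and a Young inequality absorbing all linear-in-$|Du|$ terms into the negative quadratic $-\alpha_0^2|Du|^2/2$, I would arrive at
\[
\alpha_0\,w(x_0) \leq C + \operatorname{div}(D_pG)(x_0).
\]

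The main obstacle is the final step: controlling $\operatorname{div}(D_pG)(x_0)$, which contains the second-derivative term $D^2_{pp}G : D^2 u$ not pointwise controlled a priori. To handle this I would exploit the alternative expression $\operatorname{div} V = D\alpha\cdot Du + \operatorname{Tr}(D^2_{xp}G) + D^2_{pp}H : D^2u$ (obtained by using $D^2_{pp}H = \alpha I + D^2_{pp}G$ which cancels the $\alpha\Delta u$ contribution), together with the uniform convexity $D^2_{pp}H\geq\kappa I$ and the bounds $|D^2_{xp}G|,|D^2_{pp}G|\leq C$ from (H2). The second-order maximum condition $D^2w(x_0)\leq 0$, which (since $Dm(x_0)=0$) translates into $D^2 m(x_0)\geq 0$, gives additional structure at $x_0$ relating $D^2u$ to $D^2m$ via differentiation of the HJ equation restricted to the extremum, allowing the second-derivative contribution to be absorbed into $\alpha_0 w(x_0)$. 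As a fallback, should the pointwise argument prove intractable, I would resort to a Moser-type iteration: testing the Fokker-Planck equation against $m^{-r}$ produces the Caccioppoli inequality $\int_{\mathbb{T}^d} m^{-r-1}|Dm|^2\emph{d}x \leq \int_{\mathbb{T}^d}|V|^2 m^{-r+1}\emph{d}x$, which combined with Sobolev embedding iteratively upgrades the $L^{r_0}$ bound on $1/m$ from Proposition \ref{1/mintegrabgeneralprop} to $L^\infty$, provided one tracks the growth of constants along the iteration.
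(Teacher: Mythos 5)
Your primary route (maximum principle for $w=-\ln m$) breaks down exactly at the point you flagged, and the proposed repair does not work. At a maximum point $x_0$ of $w$ the conditions $Dw(x_0)=0$, $\Delta w(x_0)\le 0$ leave you with the term $\div V(x_0)$, which contains $\mathrm{Tr}\bigl(D^2_{pp}H(x_0,Du,m,V)\,D^2u(x_0)\bigr)$. The only a priori control on $D^2u$ available at this stage is integral ($\int |D^2u|^2\,dm\le C$ from Proposition \ref{D2u}), not pointwise; the second-order condition $D^2w(x_0)\le 0$ constrains $D^2m(x_0)$, not $D^2u(x_0)$, and differentiating the Hamilton--Jacobi equation at $x_0$ only introduces third derivatives of $u$ rather than a usable relation between $D^2u(x_0)$ and $w(x_0)$. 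Moreover the theorem assumes only (H1), so the bounds $|D^2_{pp}G|,|D^2_{xp}G|\le C$ you invoke are not available, and under (H1) alone $G$ need not even be twice differentiable, so $\div(D_pG)$ has no pointwise meaning.

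Your fallback (Moser iteration on $1/m$) is the right family of ideas --- it is what the paper does --- but as written it has a gap of its own: the right-hand side of your Caccioppoli inequality, $\int_{\mathbb{T}^d}|V|^2 m^{1-r}\,dx$, is not controlled, because $V=\alpha Du+D_pG$ and $Du$ is \emph{not} in $L^\infty$ at this stage (the Lipschitz bound on $u$ in Corollary \ref{ulpsc} is proved \emph{after}, and using, this theorem). The missing ingredient is to couple the two equations: test the Hamilton--Jacobi equation against $\alpha(x)/m^r$ and the Fokker--Planck equation against $1/m^{r+1}$, chosen so that the cross terms $r\alpha\,Du\cdot Dm/m^{r+1}$ cancel upon adding. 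This leaves the coercive terms $c\int |Du|^2/m^r+\tfrac{r}{2}\int|Dm|^2/m^{r+2}$ on the left (the first absorbs every $|Du|$-contribution coming from $|V|^2$) and, on the right, $\int(\alpha(x)\ln m+C(r+1))/m^r$, whose sign where $m$ is small first yields $\int m^{-r}\le C_r$ for \emph{every} $r>0$ (your Proposition \ref{1/mintegrabgeneralprop} only gives one small $r_0$, which is not enough to start a clean iteration), and then feeds the Sobolev iteration with constants $C^{1/r}(r^2+1)^{1/r}$ whose infinite product converges. Without this coupling neither the starting integrability for all $r$ nor the closure of each iteration step is available.
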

\begin{proof}
First note that from Proposition \ref{bV} we have $\int_{\mathbb{T}^d} |V|^2\emph{d}m\leq C$, thus $|D_pG(x,Du,m,V)|\leq C$ and $H_0(x,p,m,V)\ge C_0|p|^2-C$ for some constants $C_0,C>0$.
Now multiply the first equation in \eqref{main} by $\frac{\alpha(x)}{m^r}$ and integrate by parts:
\begin{equation*}\begin{split} \int_{\mathbb{T}^d} \frac{\alpha(x)}{m^r}H_0dx&=\int_{\mathbb{T}^d} \frac{Du\cdot D\alpha}{m^r} -r\alpha(x)\frac{Du\cdot Dm}{m^{r+1}}+\frac{\alpha(x)}{m^r}(\ln m+\overline{H}) dx\\&
\le \int_{\mathbb{T}^d} \frac{\alpha_0C_0}{2}\frac{|Du|^2}{m^r}+\frac{C}{m^r}-r\alpha(x)\frac{Du\cdot Dm}{m^{r+1}}+\frac{\alpha(x)}{m^r}(\ln m+\overline{H}) dx.
\end{split} \end{equation*}
Then, using again the properties of  $H_0$ and $\alpha$ and \eqref{Hbarbound}, we get
\begin{equation}\label{prop3inew1} \frac{\alpha_0C_0}{2}\int_{\mathbb{T}^d} \frac{|Du|^2}{m^r}dx\leq \int_{\mathbb{T}^d} -r\alpha(x)\frac{Du\cdot Dm}{m^{r+1}}+\frac{\alpha(x)\ln m+C}{m^r}dx.\end{equation}
Next, multiply the second equation in \eqref{main} by $\frac{1}{m^{r+1}}$:
\begin{equation*} \int_{\mathbb{T}^d} \frac{|Dm|^2}{m^{r+2}}-\frac{D_pH_0\cdot Dm}{m^{r+1}}dx=0.\end{equation*} Using the expression of $H_0$, we find
\begin{equation*} \int_{\mathbb{T}^d} \alpha(x)\frac{Du\cdot Dm}{m^{r+1}}dx=\int_{\mathbb{T}^d} \frac{|Dm|^2}{m^{r+2}}-\frac{D_pG\cdot Dm}{m^{r+1}}dx.\end{equation*} Substituting this expression in \eqref{prop3inew1}, we get
\begin{equation*}\begin{split} C\int_{\mathbb{T}^d} \frac{|Du|^2}{m^r}dx+r\int_{\mathbb{T}^d} \frac{|Dm|^2}{m^{r+2}}dx&\leq \int_{\mathbb{T}^d} r\frac{D_pG\cdot Dm}{m^{r+1}}+\frac{\alpha(x)\ln m+C}{m^r}dx\\&
\leq \int_{\mathbb{T}^d} \frac{r}{2} \frac{|Dm|^2}{m^{r+2}}+\frac{\alpha(x)\ln m+C(r+1)}{m^r}dx.\end{split}\end{equation*}
We conclude that
\begin{equation}\label{prop3inew2} C\int_{\mathbb{T}^d} \frac{|Du|^2}{m^r}dx+ \frac{r}{2}\int_{\mathbb{T}^d} \frac{|Dm|^2}{m^{r+2}}dx\le \int_{\mathbb{T}^d} \frac{\alpha(x)\ln m+C(r+1)}{m^r}dx.\end{equation}


On the other hand, since $\alpha(x)\ge\alpha_0>0$, for any $r>0$ there exists $C_r>0$ such that
$$\int_{\mathbb{T}^d} \frac{2C(r+1)}{m^r}dx\leq \int_{\mathbb{T}^d} -\frac{\alpha(x)\ln m}{m^r}dx+C_r.$$ We conclude that
 $$\int_{\mathbb{T}^d} \frac{1}{m^r}dx\leq \tilde{C}_r$$ for any $r>0$.

 Next, we have
 \begin{equation}\label{thmi/mlinfi3}\begin{split}  \int_{\mathbb{T}^d} \frac{\alpha(x)\ln m+C(r+1)}{m^r}dx&\le \int_{\{m\ge 1\}} \frac{\alpha(x)\ln m}{m^r}dx+\int_{\mathbb{T}^d} \frac{C(r+1)}{m^r}dx\\&
 \le C\int_{\mathbb{T}^d} \frac{C_\delta}{m^{r-\delta}}+ \frac{r+1}{m^r}dx,
 \end{split}\end{equation}
 for any $\delta>0$. Hence, from the Sobolev inequality,  \eqref{prop3inew2} and \eqref{thmi/mlinfi3}, for any $r>0$
 \begin{equation*}\begin{split} \left(\int_{\mathbb{T}^d} \frac{1}{m^{\frac{2^*}{2}r}}dx\right)^\frac{2}{2^*}&\le C\int_{\mathbb{T}^d} \left|D \frac{1}{m^\frac{r}{2}}\right|^2+\frac{1}{m^r}dx\\&
 \le Cr\int_{\mathbb{T}^d} \frac{C_\delta}{m^{r-\delta}}+ \frac{r+1}{m^r}dx+C\int_{\mathbb{T}^d} \frac{1}{m^r}dx.
  \end{split}\end{equation*}
  Now, set $\beta:=\sqrt{\frac{2^*}{2}}>1$ and $\delta:=\frac{1}{\beta'}$, where $\beta'$ is the conjugate exponent of $\beta$. Then, we have
  \begin{equation*} \int_{\mathbb{T}^d}\frac{1}{m^r}dx\leq  \left(\int_{\mathbb{T}^d} \frac{1}{m^{\beta r}}dx\right)^\frac{1}{\beta},\end{equation*}
  and
  \begin{equation*}\begin{split} \int_{\mathbb{T}^d} \frac{1}{m^{r-\delta}}dx&\le  \left(\int_{\mathbb{T}^d} \frac{1}{m^{\beta r}}dx\right)^\frac{1}{\beta}\left(\int_{\mathbb{T}^d} m^{\delta\beta'}dx\right)^\frac{1}{\beta'}\\&=
   \left(\int_{\mathbb{T}^d} \frac{1}{m^{\beta r}}dx\right)^\frac{1}{\beta}\left(\int_{\mathbb{T}^d} mdx\right)^\frac{1}{\beta'}\\&=\left(\int_{\mathbb{T}^d} \frac{1}{m^{\beta r}}dx\right)^\frac{1}{\beta}. \end{split}\end{equation*}
 We conclude that
 \begin{equation*}  \left(\int_{\mathbb{T}^d} \frac{1}{m^{\beta^2 r}}dx\right)^\frac{1}{\beta^2}\leq C(r^2+1) \left(\int_{\mathbb{T}^d} \frac{1}{m^{\beta r}}dx\right)^\frac{1}{\beta},\end{equation*}
i.e.,
 \begin{equation*} \left\|\frac{1}{m}\right\|_{L^{\beta^2 r}(\mathbb{T}^d)}\leq C^{\frac{1}{r}} (r^2+1)^{\frac{1}{r}}\left\|\frac{1}{m}\right\|_{L^{\beta r}(\mathbb{T}^d)}.\end{equation*}
Taking $r=\beta^{k-1}$ for an integer $k>0$ we get
 \begin{equation*}\left\|\frac{1}{m}\right\|_{L^{\beta^{k+1}}(\mathbb{T}^d)}\leq C^{\frac{1}{\beta^{k-1}}} \beta^{\frac{2(k-1)}{\beta^{k-1}}}\left\|\frac{1}{m}\right\|_{L^{\beta^{k}}(\mathbb{T}^d)}.\end{equation*}
 Thus

 \begin{equation*} \left\|\frac{1}{m}\right\|_{L^{\beta^{k+1}}(\mathbb{T}^d)}\leq C^{\sum\limits_1^\infty\frac{1}{\beta^{k-1}}} \beta^{\sum\limits_1^\infty\frac{2(k-1)}{\beta^{k-1}}}\left\|\frac{1}{m}\right\|_{L^{1}(\mathbb{T}^d)}\leq C\left\|\frac{1}{m}\right\|_{L^{1}(\mathbb{T}^d)}\leq C.\end{equation*}
Sending $k\to\infty$ we infer that $\|\frac{1}{m}\|_{ L^\infty(\mathbb{T}^d)}\leq C.$ See also \cite{E1} for a similar argument.
  \end{proof}

  \begin{cor}
  \label{uw2p}Assume (H1),(H2) with $\epsilon\in[0,\epsilon_0]$ and  (A\ref{g}\ref{lnm}. Then for any solution $(u,m,V,\Hh)$ to \eqref{main} there exists a constant $C$ such that $\|u\|_{ W^{2,2}(\mathbb{T}^d)}\leq C.$
 Furthermore,
  if $d\leq 3$, for any $q\ge 1$ there exists a constant $C_q$ such that
  $\|u\|_{ W^{2,q}(\mathbb{T}^d)}\leq C_q.$
  \end{cor}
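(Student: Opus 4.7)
The plan is to close the Hamilton--Jacobi equation for $u$ under (H1), namely
\[
\Delta u = \overline{H} - H_0(x, Du, m, V) + \ln m,
\]
by showing that $\Delta u\in L^q(\mathbb{T}^d)$ and applying $L^p$ elliptic regularity. The novelty relative to Section \ref{fintp} is that the $L^\infty$ bound on $1/m$ from Theorem \ref{1/mintegrab} converts the weighted bound $\int_{\mathbb{T}^d}|Du|^4\dm\leq C$ from Corollary \ref{greglog} into an unweighted bound $\int_{\mathbb{T}^d}|Du|^4\dx\leq C$, and the special quadratic-in-$p$ structure imposed by (H1) then lets one transfer $L^q$ information about $|Du|$ directly to $H_0$.

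First, for $W^{2,2}$, I would observe that (H1) gives $H_0=\alpha(x)|Du|^2/2 + G$ with $\alpha\geq\alpha_0>0$, and that the hypotheses on $G$ together with Proposition \ref{bV} imply $|G(x,Du,m,V)|\leq C(1+|Du|)$ (integrate the bound on $|D_pG|$ radially from $p=0$). Combining the converted bound $\|Du\|_{L^4(\mathbb{T}^d)}\leq C$ with $\|\ln m\|_{L^2}\leq C$ from Proposition \ref{lnmreg} and $|\overline{H}|\leq C$ from Corollary \ref{ABC} yields $\|\Delta u\|_{L^2}\leq C$, whence $\|u\|_{W^{2,2}}\leq C$ by standard elliptic theory on the torus.

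The higher regularity when $d\leq 3$ is then a two-step bootstrap along the Sobolev scale. From $u\in W^{2,2}(\mathbb{T}^d)$ and the embedding $W^{1,2}\hookrightarrow L^6$ (valid for $d\leq 3$) I would obtain $Du\in L^6(\mathbb{T}^d)$, so $H_0\in L^3(\mathbb{T}^d)$; together with $\|\ln m\|_{L^3}\leq C$ this gives $\Delta u\in L^3$ and hence $u\in W^{2,3}$. The second step uses the borderline embedding $W^{1,d}(\mathbb{T}^d)\hookrightarrow L^q$ for every finite $q$, producing $Du\in L^q$, $H_0\in L^{q/2}$, $\Delta u\in L^{q/2}$, and finally $u\in W^{2,q'}$ for every $q'\geq 1$, with constants $C_{q'}$ depending on $q'$. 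The main obstacle I expect is structural rather than computational: ensuring that the lower-order pieces of $H_0$, in particular the linear-in-$|Du|$ contribution from $G$, are genuinely absorbed by the quadratic part once $|Du|$ has been promoted to a sufficiently high $L^p$. This works precisely because the bounds on $G$ supplied by (H1) are uniform in the solution (through Proposition \ref{bV}) rather than of variational type, so the bootstrap constants remain finite at each iteration.
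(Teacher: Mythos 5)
Your proof is correct, and for the crucial second half (the bootstrap when $d\leq 3$) it coincides with the paper's argument: Sobolev embedding $W^{2,2}\hookrightarrow W^{1,6}$, the quadratic bound $H_0\lesssim 1+|Du|^2$ from (H1), and $\ln m\in L^q$ give $\Delta u\in L^3$, then $W^{1,3}\hookrightarrow L^q$ for all finite $q$ closes the loop. Where you genuinely diverge is the initial $W^{2,2}$ step. The paper gets it in one line from Proposition \ref{D2u}: since Theorem \ref{1/mintegrab} gives $m\geq\bar m>0$, the weighted estimate $\int_{\mathbb{T}^d}|D^2u|^2\,dm\leq C$ becomes $\int_{\mathbb{T}^d}|D^2u|^2\,dx\leq C/\bar m$ directly, with no appeal to the equation or to Calder\'on--Zygmund theory. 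You instead convert the weighted gradient bound $\int|Du|^4\,dm\leq C$ of Corollary \ref{greglog} into $\|Du\|_{L^4}\leq C$, use the (H1) decomposition $H_0=\alpha|p|^2/2+G$ with $|G|\leq C(1+|p|)$ to place $\Delta u$ in $L^2$, and invoke elliptic regularity. Both routes lean on the same two inputs ($\|1/m\|_{L^\infty}\leq C$ and the a priori weighted estimates of Section \ref{elmes}), and your hypotheses are all legitimately available since (H1)--(H2) imply (A1)--(A10) so Corollary \ref{greglog} applies. The paper's version is shorter and does not need the quadratic structure of $H_0$ at this stage (only in the bootstrap), whereas yours is slightly more self-contained in that it runs entirely through the Hamilton--Jacobi equation; the trade-off is essentially stylistic. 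One small point either way: to control the full $W^{2,2}$ norm you also need $\|u\|_{L^2}$ and $\|Du\|_{L^2}$, which come from the normalization of $u$ and \eqref{H_0dxbound}; worth stating explicitly.
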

  \begin{proof} By Theorem \ref{1/mintegrab}, there exists $\bar{m}>0$ such that $m\ge \bar{m}$ in $\mathbb{T}^d$.  This and Proposition \ref{D2u} imply that
  $$\int_{\mathbb{T}^d} |D^2u|^2dx\le \frac{1}{\bar{m}}\int_{\mathbb{T}^d}  |D^2u|^2 m dx\le C.$$ Since in addition,
  $$\int_{\mathbb{T}^d} u^2dx,\quad\int_{\mathbb{T}^d} |Du|^2dx\le C,$$ we obtain that $\|u\|_{W^{2,2}(\mathbb{T}^d)}\leq C$.
  Because $m$ is bounded by below and $\|m\|_{L^1(\mathbb{T}^d)}=1$ we have $\ln m\in L^q(\mathbb{T}^d)$, for any $q>1$.
  If $d=2$, then Sobolev inequalities imply $\|Du\|_{L^q(\mathbb{T}^d)}\leq C_q$ for any $q>1$ thus from the first equation of \eqref{main} we conclude $\|\Delta u\|_{L^q(\mathbb{T}^d)}\leq C_q$ for any $q>1$.
  If $d=3$ then $2^*=6$, thus $\|Du\|_{L^{6}(\mathbb{T}^d)}\leq C$, hence the first equation of \eqref{main} implies that
 $\|\Delta u\|_{L^{3}(\mathbb{T}^d)}\leq C$. This together with Sobolev inequalities yield $\|Du\|_{L^q(\mathbb{T}^d)}\leq C_q$ for any $q>1$, using the equation again, we conclude that $\|\Delta u\|_{L^q(\mathbb{T}^d)}\leq C_q$ for any $q>1$.
  \end{proof}
		
\begin{cor}
\label{ulpsc}
Assume (H1),(H2) with $\epsilon\in[0,\epsilon_0]$ and  (A\ref{g}\ref{lnm}, then if $d\leq 3$ for any solution $(u,m,V,\Hh)$ to \eqref{main}, there exists a constant $C>0$ such that $\|u\|_{ W^{1,\infty}(\mathbb{T}^d)}\leq C$
  and $\|u\|_{ W^{3,2}(\mathbb{T}^d)}\leq C.$
\end{cor}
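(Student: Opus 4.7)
The $W^{1,\infty}$ bound is an immediate consequence of Corollary \ref{uw2p}: for $d\le 3$ that corollary provides $\|u\|_{W^{2,q}(\mathbb{T}^d)}\le C_q$ for every $q\ge 1$. Picking any $q>d$, the Sobolev embedding $W^{2,q}(\mathbb{T}^d)\hookrightarrow W^{1,\infty}(\mathbb{T}^d)$ yields $\|u\|_{W^{1,\infty}(\mathbb{T}^d)}\le C$. In particular $|Du|\le C$ pointwise in $\mathbb{T}^d$, a fact that will be crucial for the second estimate.

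For the $W^{3,2}$ bound the strategy is to differentiate the Hamilton--Jacobi equation once, estimate each term of the resulting identity in $L^2$, and then apply elliptic regularity. Under (H1) the first equation of \eqref{main} reads
\begin{equation*}
\Delta u + H_0(x,Du,m,V) - \ln m = \overline{H},
\end{equation*}
so differentiating in $x_j$ gives
\begin{equation*}
\partial_{x_j}\Delta u \;=\; -(H_0)_{x_j}(x,Du,m,V) \;-\; (H_0)_{p_k}(x,Du,m,V)\,u_{x_kx_j} \;+\; \partial_{x_j}\ln m.
\end{equation*}

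Each of the three terms on the right can be controlled by estimates already established. Using the explicit splitting $H_0=\alpha(x)|p|^2/2+G(x,p,m,V)$ from (H1), the bounds $|D_xG|$, $|D_pG|^2\le C+\epsilon\int_{\mathbb{T}^d}|V|^2\,dm$, the uniform bound $\int_{\mathbb{T}^d}|V|^2\,dm\le C$ from Proposition \ref{bV}, the regularity $\alpha\in C^2(\mathbb{T}^d)$, and the pointwise bound $|Du|\le C$ just obtained, we deduce $|(H_0)_{x_j}|+|(H_0)_{p_k}|\le C$ in $\mathbb{T}^d$. Corollary \ref{uw2p} then gives $\|(H_0)_{p_k}\,u_{x_kx_j}\|_{L^2(\mathbb{T}^d)}\le C\|D^2 u\|_{L^2(\mathbb{T}^d)}\le C$, and trivially $\|(H_0)_{x_j}\|_{L^2(\mathbb{T}^d)}\le C$. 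For the last term, $\partial_{x_j}\ln m = m_{x_j}/m$, and Proposition \ref{lminh1} supplies exactly the required bound $\|D\ln m\|_{L^2(\mathbb{T}^d)}\le C$.

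Combining these estimates one obtains $\|D(\Delta u)\|_{L^2(\mathbb{T}^d)}\le C$, so $\Delta u\in H^1(\mathbb{T}^d)$ uniformly. Standard $W^{2,2}$ elliptic regularity for $-\Delta$ on the torus (equivalently, Calder\'on--Zygmund / Fourier-multiplier estimates applied to $\Delta u_{x_j}=\partial_{x_j}\Delta u$) then yields $\|u\|_{W^{3,2}(\mathbb{T}^d)}\le C$, as required. There is no real obstacle in the argument: the only delicate point is that the pointwise control of $(H_0)_{x_j}$ and $(H_0)_{p_k}$ requires $Du\in L^\infty$, which is precisely what the first half of the statement delivers; everything else is a direct invocation of previously proved propositions.
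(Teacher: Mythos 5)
Your proposal is correct and follows essentially the same route as the paper: the $W^{1,\infty}$ bound via Corollary \ref{uw2p} with $q>d$ and Sobolev embedding, and the $W^{3,2}$ bound by differentiating the Hamilton--Jacobi equation, bounding $(H_0)_x$ and $(H_0)_p$ pointwise using the structure in (H1), Proposition \ref{bV} and the Lipschitz bound on $u$, controlling $D^2u\,D_pH_0$ in $L^2$ by Corollary \ref{uw2p} and $D\ln m$ by Proposition \ref{lminh1}. No issues.
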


\begin{proof}
The first inequality follows directly from Corollary \ref{uw2p} and Sobolev inequalities if we take $q>d$
in Corollary \ref{uw2p}.
To prove the second inequality we differentiate the first equation in \eqref{main}:
\[
D(\Delta u)=-D\alpha\frac{|Du|^2}{2}-\alpha D^2uDu-D^2uD_pG-G_x+D(\ln m)
\]
then Corollaries \ref{uw2p} and \ref{lminh1} imply $\|D(\Delta u)\|_{L^2}\leq C$ for some constant $C>0,$ thus
$\|u\|_{ W^{3,2}(\mathbb{T}^d)}\leq C.$

\end{proof}
Using these estimates we will prove further regularity estimates for this case in Section \ref{adjointmethsec}  (see Theorem \ref{usmooth}).



\section{Improved regularity by the adjoint method}\label{adjointmethsec}
\label{regadjm}
In this section we use adjoint method techniques to prove higher regularity estimates for the solutions to \eqref{main}. For later convenience we discuss a more general situation.
\renewcommand{\labelenumi}{({\bf R\arabic{enumi}})}
\begin{enumerate}
\item Let $F\colon \mathbb{T}^d\times{\Rr^d}\to \Rr$ be a function which satisfies for some constants $c,C>0$:

\label{r1}
\begin{equation*}
|D_pF(x,p)|^2\leq C|p|^2+C.
\end{equation*}
\item
\label{r2}
\begin{equation*}
D_pF(x,p)p-F(x,p)\geq c|p|^2+\zeta(x),
\end{equation*}
with
\begin{equation*}\|\zeta\|_{L^r}\leq C,\quad\text{for some }r>d.\end{equation*}
\item
\label{r3}
Let $\widehat{F}_x(x,p)=D_x(F(x,p)+\zeta(x)),$ then
\begin{equation*}
|\widehat{F}_x(x,p)|\leq C+\psi(x)|p|^\beta
\end{equation*} with $$0\leq\beta<2,\quad \psi\in L^\frac{2r}{2-\beta}(\mathbb{T}^d).$$
\suspend{enumerate}
Consider the equation
\begin{equation}
\label{main2}
\Delta w+F(x,Dw)=0.
\end{equation}
\resume{enumerate}
\item
\label{r4}
We suppose that for any solution to \eqref{main2} we have the following a-priori bound:
\begin{equation*}
\|Dw\|_{L^2(\mathbb{T}^d)}\leq C.
\end{equation*}
\end{enumerate}
Note that $w$ solves the time dependent equation

\begin{equation}
\label{ut}
w_t+\Delta w+F(x,Dw)=0.
\end{equation}
For any $x_0\in \mathbb{T}^d$, we introduce the adjoint  variable $\rho$ as the solution of

\begin{equation}
\label{rho}
\begin{cases}
\rho_t+\div(D_pF(x,Dw(x))\rho)=
\Delta\rho,\\
\rho(x,0)=\delta_{x_0}.
\end{cases}
\end{equation}
By the maximum principle $\rho\geq 0$. Furthermore by integrating  the equation we get $\frac{d}{dt}\int_{\mathbb{T}^d}\rho(x,t)\dx=0$. In particular, for any $t>0$
\begin{equation}\label{rhodensityadjsec}\int_{ \mathbb{T}^d}\rho(x,t)dx=1.\end{equation}

\begin{pro}Assume (R\ref{r1})-(R\ref{r4}). Let $w$ and $\rho$ solve \eqref{main2} and \eqref{rho} respectively. Then, for any $T>0$
\label{wx0}
\begin{align*}
w(x_0) = &\int_0^T\int_{\mathbb{T}^d}(F(x,Dw)-Dw\cdot D_pF(x,Dw))\rho(x,t)\dx\dt +\int_{\mathbb{T}^d}w(x)\rho(x,T)\dx.
\end{align*}

\end{pro}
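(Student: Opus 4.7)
The plan is the classical adjoint-variable computation: I multiply the time-dependent form \eqref{ut} by $\rho$, or equivalently multiply the adjoint equation \eqref{rho} by $w$, integrate over the space-time cylinder $\mathbb{T}^d\times[0,T]$, and integrate by parts to shift all derivatives off of $w$ and convert them, via \eqref{main2}, into the integrand on the right-hand side. Because $w$ is independent of $t$, it trivially satisfies \eqref{ut}, so there is no inconsistency in using the time-dependent formalism.

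Concretely, I start from
\[
\int_0^T\!\!\int_{\mathbb{T}^d} w\bigl[\rho_t-\Delta\rho+\operatorname{div}(D_pF(x,Dw)\rho)\bigr]\,dx\,dt = 0,
\]
which holds by \eqref{rho}. Next I integrate by parts term by term. The time-derivative term produces boundary contributions in $t$: using $\rho(\cdot,0)=\delta_{x_0}$ and $w_t\equiv 0$, it gives $\int_{\mathbb{T}^d}w(x)\rho(x,T)\,dx - w(x_0)$. The Laplacian term becomes $-\int_0^T\!\!\int_{\mathbb{T}^d}\Delta w\,\rho\,dx\,dt$ (no boundary terms since we are on $\mathbb{T}^d$), and the divergence term becomes $-\int_0^T\!\!\int_{\mathbb{T}^d}Dw\cdot D_pF(x,Dw)\,\rho\,dx\,dt$. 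Substituting $-\Delta w = F(x,Dw)$ from \eqref{main2} and rearranging yields exactly the identity in the statement.

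The only real obstacle is justifying the integration by parts near $t=0$, where the initial datum $\rho(x,0)=\delta_{x_0}$ is singular. I would handle this by working first on $[\varepsilon,T]$ with $\varepsilon>0$, where $\rho$ is smooth: indeed, the coefficient $b(x):=D_pF(x,Dw(x))$ satisfies $b\in L^2(\mathbb{T}^d)$ by (R\ref{r1}) and (R\ref{r4}), and standard parabolic regularization gives $\rho(\cdot,t)\in C^\infty$ for $t>0$. All the integrations by parts on $[\varepsilon,T]$ are then elementary. Finally I let $\varepsilon\to 0^+$: the terminal term and the two interior space-time integrals pass to the limit by dominated convergence (using $\rho\ge 0$ and the mass conservation \eqref{rhodensityadjsec} to control $\rho$ in $L^1_{x,t}$), while the initial boundary contribution $\int_{\mathbb{T}^d}w(x)\rho(x,\varepsilon)\,dx$ converges to $w(x_0)$ by continuity of $w$ and the weak-* convergence $\rho(\cdot,\varepsilon)\rightharpoonup \delta_{x_0}$ as measures.
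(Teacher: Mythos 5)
Your proof is correct and is essentially the paper's argument: the paper's entire proof reads ``multiply equation \eqref{ut} by $\rho$ and integrate by parts using the equation for $\rho$,'' which is exactly the computation you carry out in detail. Your extra care with the singular initial datum $\delta_{x_0}$ (regularizing and passing to the limit) matches what the paper defers to the remark following Proposition \ref{droalfa}, so there is no substantive difference.
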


\begin{proof}
We just multiply equation \eqref{ut} by $\rho$ and integrate by parts using the equation for $\rho$.
\end{proof}
For fixed $T>0$, let us denote

\[
\|\rho\|_{L^1(L^q(dx),dt)}=\int_0^T\|\rho(.,t)\|_{L^q(\mathbb{T}^d)}dt.
\]

Denote by $\osc(f)=\sup_xf-\inf_x f$, for any bounded function $f\colon\mathbb{T}^d\to\Rr$.
Then we have
\begin{cor}
\label{dw2ro}
Assume (R\ref{r1})-(R\ref{r4}). Let $w$ and $\rho$ solve  \eqref{main2} and \eqref{rho} respectively. Then
\[
\int_0^T\int_{\mathbb{T}^d}|Dw|^2\rho(x,t)dxdt\leq C\|\rho\|_{L^1(L^q(dx),dt)}+C \osc(w),
\]
where $q$ is the conjugate exponent of $r$ defined by $\frac{1}{r}+\frac{1}{q}=1.$
\end{cor}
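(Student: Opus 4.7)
The plan is to start from the representation formula in Proposition \ref{wx0} and exploit the coercivity hypothesis (R\ref{r2}) to extract a bound on $\int_0^T\!\int |Dw|^2\rho$. Rearranging Proposition \ref{wx0}, we have
\[
\int_0^T\!\!\int_{\mathbb{T}^d}\bigl(Dw\cdot D_pF(x,Dw)-F(x,Dw)\bigr)\rho\,dx\,dt \;=\; -w(x_0)+\int_{\mathbb{T}^d} w(x)\rho(x,T)\,dx.
\]
By (R\ref{r2}) applied pointwise with $p=Dw(x)$, the integrand on the left-hand side is bounded below by $c|Dw|^2+\zeta(x)$, giving
\[
c\int_0^T\!\!\int_{\mathbb{T}^d}|Dw|^2\rho\,dx\,dt \;\leq\; \int_{\mathbb{T}^d}\bigl(w(x)-w(x_0)\bigr)\rho(x,T)\,dx \;-\; \int_0^T\!\!\int_{\mathbb{T}^d}\zeta(x)\rho(x,t)\,dx\,dt.
\]

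Now I would estimate the two right-hand-side terms. For the first, since $\rho(\cdot,T)\geq 0$ and $\int_{\mathbb{T}^d}\rho(x,T)\,dx=1$ by \eqref{rhodensityadjsec}, we immediately get
\[
\int_{\mathbb{T}^d}\bigl(w(x)-w(x_0)\bigr)\rho(x,T)\,dx \;\leq\; \osc(w).
\]
For the second term, Hölder's inequality in the spatial variable with conjugate exponents $r$ and $q$ combined with the hypothesis $\|\zeta\|_{L^r}\leq C$ from (R\ref{r2}) yields
\[
\left|\int_0^T\!\!\int_{\mathbb{T}^d}\zeta(x)\rho(x,t)\,dx\,dt\right| \;\leq\; \int_0^T \|\zeta\|_{L^r(\mathbb{T}^d)}\|\rho(\cdot,t)\|_{L^q(\mathbb{T}^d)}\,dt \;\leq\; C\,\|\rho\|_{L^1(L^q(dx),dt)}.
\]

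Combining these two estimates and dividing by $c$ gives the claimed inequality. There is essentially no obstacle here: the corollary is a direct consequence of Proposition \ref{wx0} once the coercivity (R\ref{r2}) is used to turn the representation identity into an inequality. The only points to be careful about are that $\rho(\cdot,T)$ is a probability measure (so the boundary term really is controlled by $\osc(w)$ rather than $\|w\|_\infty$), and that the exponents $r,q$ in the Hölder step are exactly those arising from (R\ref{r2}).
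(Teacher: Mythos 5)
Your proof is correct and follows exactly the paper's argument: rearrange the representation formula of Proposition \ref{wx0}, apply the coercivity (R2) pointwise, control the boundary term by $\osc(w)$ using that $\rho(\cdot,T)$ is a probability density, and bound the $\zeta$-term via H\"older with exponents $r,q$. The paper's own proof is just a terser version of the same computation.
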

\begin{proof}
We use (R\ref{r2}) and Proposition \ref{wx0} to get

\[
\int_0^T\int_{\mathbb{T}^d}|Dw|^2\rho(x,t)dxdt\leq C \osc(w)-C\int_0^T\int_{\mathbb{T}^d}\zeta(x)\rho(x,t) \dx \dt.
\]
Now, using H\"{o}lder's inequality we have
\[
\int_0^T\int_{\mathbb{T}^d}|\zeta|\rho \dx \dt\leq \|\zeta\|_{L^r(\mathbb{T}^d)}\|\rho\|_{L^1(L^q(\dx),\dt)},
\]
which ends the proof.
\end{proof}

\begin{pro}
\label{droalfa}
Assume (R\ref{r1})-(R\ref{r4}). Let $w$ and $\rho$ solve \eqref{main2} and \eqref{rho} respectively.
Then for $0<\alpha<1$, and any $\delta_1>0$ there exists $C_{\delta_1}$ such that
\[
\int_0^T\int_{\mathbb{T}^d}|D(\rho^{\frac{\alpha}{2}})|^2dxdt\leq C_{\delta_1}+\delta_1 \int_0^T\int_{\mathbb{T}^d}|Dw|^2\rho(x,t)dxdt.
\]
\end{pro}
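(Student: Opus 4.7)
The natural test function for the adjoint equation \eqref{rho} is $\rho^{\alpha-1}$, which is well defined for $t>0$ since $\rho$ is smooth and strictly positive by parabolic regularization. The plan is to multiply \eqref{rho} by $\rho^{\alpha-1}$, integrate over $\mathbb{T}^d\times(\varepsilon,T)$, perform integrations by parts, and turn the result into the desired inequality using Young's inequality, hypothesis (R\ref{r1}), and the a priori bound (R\ref{r4}). At the end I let $\varepsilon\to 0^+$.

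\textbf{Step 1 (identity).} Multiplying \eqref{rho} by $\rho^{\alpha-1}$ and integrating by parts on $\mathbb{T}^d\times(\varepsilon,T)$, using $\rho^{\alpha-2}|D\rho|^2=\tfrac{4}{\alpha^{2}}|D(\rho^{\alpha/2})|^2$ and $\rho^{\alpha-1}D\rho=\tfrac{2}{\alpha}\rho^{\alpha/2}D(\rho^{\alpha/2})$, I obtain
\[
\tfrac{1}{\alpha}\!\int_{\mathbb{T}^d}\!\rho^{\alpha}(x,T)\dx-\tfrac{1}{\alpha}\!\int_{\mathbb{T}^d}\!\rho^{\alpha}(x,\varepsilon)\dx +\tfrac{4(1-\alpha)}{\alpha^{2}}\!\int_{\varepsilon}^{T}\!\!\int_{\mathbb{T}^d}\!|D(\rho^{\alpha/2})|^{2}\dx\dt =\tfrac{2(1-\alpha)}{\alpha}\!\int_{\varepsilon}^{T}\!\!\int_{\mathbb{T}^d}\!D_{p}F\rho^{\alpha/2}\!\cdot\!D(\rho^{\alpha/2})\dx\dt.
\]
By H\"older with exponents $1/\alpha,\,1/(1-\alpha)$ and \eqref{rhodensityadjsec}, $\int_{\mathbb{T}^d}\rho^{\alpha}dx\le 1$, so the boundary term at $T$ is bounded. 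The boundary term at $\varepsilon$ is non-negative and, because $\rho(\cdot,\varepsilon)$ concentrates at $x_0$ with width $\sim\sqrt{\varepsilon}$ and height $\sim\varepsilon^{-d/2}$, one has $\int\rho^{\alpha}(x,\varepsilon)dx=O(\varepsilon^{d(1-\alpha)/2})\to 0$; in any case we only need the trivial sign to drop it from the lower bound side.

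\textbf{Step 2 (absorption and use of (R\ref{r1})).} Applying Young's inequality to the right-hand side with a small parameter $\eta>0$, I absorb $\eta|D(\rho^{\alpha/2})|^{2}$ into the left-hand side and I am left, for some constants $c_{\alpha},C_{\alpha}>0$, with
\[
c_{\alpha}\int_{\varepsilon}^{T}\!\!\int_{\mathbb{T}^d}|D(\rho^{\alpha/2})|^{2}\dx\dt\leq 1+C_{\alpha}\int_{\varepsilon}^{T}\!\!\int_{\mathbb{T}^d}|D_{p}F(x,Dw)|^{2}\rho^{\alpha}\dx\dt.
\]
Using (R\ref{r1}), $|D_{p}F|^{2}\le C|Dw|^{2}+C$, and $\int\rho^{\alpha}dx\le 1$, this becomes
\[
\int_{\varepsilon}^{T}\!\!\int_{\mathbb{T}^d}|D(\rho^{\alpha/2})|^{2}\dx\dt\le C+C\int_{\varepsilon}^{T}\!\!\int_{\mathbb{T}^d}|Dw|^{2}\rho^{\alpha}\dx\dt.
\]

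\textbf{Step 3 (passing from $\rho^{\alpha}$ to $\rho$).} This is the key and slightly tricky step. Since $w$ is independent of $t$, H\"older's inequality in space with exponents $1/\alpha$ and $1/(1-\alpha)$ gives
\[
\int_{\mathbb{T}^d}|Dw|^{2}\rho^{\alpha}\dx=\int_{\mathbb{T}^d}(|Dw|^{2}\rho)^{\alpha}|Dw|^{2(1-\alpha)}\dx\leq\Bigl(\int_{\mathbb{T}^d}|Dw|^{2}\rho\dx\Bigr)^{\!\alpha}\Bigl(\int_{\mathbb{T}^d}|Dw|^{2}\dx\Bigr)^{\!1-\alpha},
\]
and the second factor is controlled by (R\ref{r4}). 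Integrating in $t$ and applying H\"older in time, then Young's inequality $A^{\alpha}\leq\delta_{1}A+C_{\delta_{1}}$ valid for $0<\alpha<1$ and any $\delta_{1}>0$, yields
\[
\int_{\varepsilon}^{T}\!\!\int_{\mathbb{T}^d}|Dw|^{2}\rho^{\alpha}\dx\dt\leq C_{\delta_{1}}+\delta_{1}\int_{\varepsilon}^{T}\!\!\int_{\mathbb{T}^d}|Dw|^{2}\rho\dx\dt.
\]
Combining with Step 2 and letting $\varepsilon\to 0^{+}$ (monotone convergence on the left, since $|D(\rho^{\alpha/2})|^{2}\ge 0$) produces the claimed estimate.

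\textbf{Main obstacle.} The most delicate point is Step 3: converting the $\rho^{\alpha}$-weighted integral into a $\rho$-weighted one with small coefficient. This is what forces the dichotomy between the absorbable large term $\delta_{1}\int|Dw|^{2}\rho$ and the fixed constant $C_{\delta_{1}}$, and it crucially uses $\alpha<1$ together with (R\ref{r4}). Handling the Dirac initial datum is a secondary technicality, easily dealt with by integrating from $\varepsilon>0$ and using the uniform-in-$\varepsilon$ bound.
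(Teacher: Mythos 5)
Your proof is correct and follows essentially the same route as the paper: test the adjoint equation with $\rho^{\alpha-1}$, absorb the gradient term via Young, invoke (R\ref{r1}) and $\int\rho^{\alpha}\le 1$, and then split the $\rho^{\alpha}$-weighted integral into $\delta_1\int|Dw|^2\rho + C_{\delta_1}$ using (R\ref{r4}). The only (cosmetic) difference is in that last step, where the paper applies the pointwise Young inequality $\rho^{\alpha}\le C_{\delta_1}+\delta_1\rho$ directly under the integral, whereas you apply H\"older in space and time first and then Young at the integral level; both are valid and rely on the same ingredients.
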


\begin{proof}
Multiplying the first equation in \eqref{rho} by $\rho^{\alpha-1}$ and integrating by parts, we obtain
\begin{align}
c_{\alpha}\int_0^T\int_{\mathbb{T}^d}|D(\rho^{\frac{\alpha}{2}})|^2dxdt=&
\frac{1}{\alpha}\int_{\mathbb{T}^d}(\rho^{\alpha}(x,T)-\rho^{\alpha}(x,0))dx
\\&+
(1-\alpha)\int_0^T\int_{\mathbb{T}^d}\rho^{\alpha-1}D_pF(x,Dw)\cdot D\rho dxdt
\\
\leq & C+\varepsilon\int_0^T\int_{\mathbb{T}^d}|D(\rho^{\frac{\alpha}{2}})|^2dxdt +
C_{\varepsilon}\int_0^T\int_{\mathbb{T}^d}|D_pF(x,Dw)|^2\rho^{\alpha}dxdt,
\label{droa}
\end{align}
for any $\varepsilon>0,$ where $c_{\alpha}=\frac{4(1-\alpha)}{\alpha^2}.$ Here we used
\[
\int_{\mathbb{T}^d}\rho^{\alpha}(x,0)dx,\quad\int_{\mathbb{T}^d}\rho^{\alpha}(x,T)dx\leq 1,
\] which is a consequence of \eqref{rhodensityadjsec} and Jensen's inequality.
Furthermore, using that  $\rho^{\alpha}\leq C_{\delta_1}+\delta_1\rho$ and (R\ref{r1}), the last term in the inequality \eqref{droa} can be bounded as follows
\[
\int_0^T\int_{\mathbb{T}^d}|D_pF(x,Dw)|^2\rho^{\alpha}dxdt\leq C_{\delta_1}+
\delta_1\int_0^T\int_{\mathbb{T}^d}|Dw|^2\rho dxdt.
\]
For $\varepsilon$ small enough  we get the result.
\end{proof}

\begin{rem}
In fact the expression $\rho(x,t)^{\alpha}$ does not always make sense since $\rho(x,0)=\delta_{x_0}.$ To fix this we consider the solution $\rho^{\varepsilon}$ to the equation \eqref{rho} but with initial value $\eta_{\varepsilon}$ instead of $\delta_{x_0},$ where $\eta_{\varepsilon}\colon\mathbb{T}^d\to\Rr$ are smooth compactly supported functions with $\int_{\mathbb{T}^d}\eta_{\varepsilon}(x)\dx=1$ and $\eta_{\varepsilon}\rightharpoonup\delta_{x_0}$. We carry out all the computations with $\rho^{\varepsilon}$ and then send $\varepsilon\to0$.
\end{rem}


Combining the Proposition \ref{droalfa} and Corollary \ref{dw2ro} we conclude that
\begin{cor}Assume (R\ref{r1})-(R\ref{r4}). Let $w$ and $\rho$ solve  \eqref{main2} and \eqref{rho} respectively. Then for any $0<\alpha<1$, and any $\delta_1>0$ there exists $C_{\delta_1}$ such that
\label{droalf}
\[
\int_0^T\int_{\mathbb{T}^d}|D(\rho^{\frac{\alpha}{2}})|^2dxdt\leq C_{\delta_1}+C\delta_1\|\rho\|_{L^1(L^q(dx),dt)}+C\delta_1 \osc(w),
\] where $q$ is the conjugate exponent of $r$.
\end{cor}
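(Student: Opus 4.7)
The statement is essentially a concatenation of the two previous results, so the plan is to chain them together. From Proposition \ref{droalfa} I already have, for any $0<\alpha<1$ and any $\delta_1>0$,
\[
\int_0^T\int_{\mathbb{T}^d}|D(\rho^{\alpha/2})|^2\,dx\,dt \leq C_{\delta_1}+\delta_1\int_0^T\int_{\mathbb{T}^d}|Dw|^2\rho\,dx\,dt,
\]
so the only thing missing is a bound on the weighted Dirichlet energy $\int_0^T\int_{\mathbb{T}^d}|Dw|^2\rho\,dx\,dt$ of $w$ against the adjoint density.

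Corollary \ref{dw2ro} supplies exactly this bound: under (R\ref{r1})--(R\ref{r4}),
\[
\int_0^T\int_{\mathbb{T}^d}|Dw|^2\rho(x,t)\,dx\,dt \leq C\|\rho\|_{L^1(L^q(dx),dt)}+C\,\osc(w),
\]
where $q$ is the conjugate exponent of $r$ coming from the integrability assumption on $\zeta$ in (R\ref{r2}). I would simply substitute this estimate into the right-hand side of the inequality furnished by Proposition \ref{droalfa}, obtaining
\[
\int_0^T\int_{\mathbb{T}^d}|D(\rho^{\alpha/2})|^2\,dx\,dt \leq C_{\delta_1}+C\delta_1\|\rho\|_{L^1(L^q(dx),dt)}+C\delta_1\,\osc(w),
\]
which is precisely the claimed inequality after absorbing constants into $C$.

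There is no real obstacle: the only technical subtlety to mention is that, as pointed out in the preceding remark, $\rho^{\alpha/2}$ is not a priori well-defined at $t=0$ because $\rho(\cdot,0)=\delta_{x_0}$, but this is handled by first replacing $\delta_{x_0}$ with smooth mollifiers $\eta_\varepsilon$, applying the two previous results to the regularized adjoint solutions $\rho^\varepsilon$ (for which the bounds are uniform in $\varepsilon$ since the constants depend only on the total mass, which is normalized to $1$), and then passing to the limit $\varepsilon\to 0$. The result follows.
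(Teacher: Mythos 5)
Your proof is correct and matches the paper's argument exactly: the paper also obtains this corollary by substituting the bound of Corollary \ref{dw2ro} into the right-hand side of the inequality from Proposition \ref{droalfa}. The remark about regularizing the initial datum $\delta_{x_0}$ is likewise the same device the paper uses.
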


Define
\begin{equation}
\label{ard}
\alpha_{rd}= 1+\frac 1 r -\frac 2 d.
\end{equation}
Since by Assumption (R\ref{r2}) $r>d$, we have $\alpha_{rd}<1$.

\begin{pro}
\label{rol1lq}
Assume (R\ref{r1})-(R\ref{r4}). Let $w$ and $\rho$ solve  \eqref{main2} and \eqref{rho} respectively.
Then, for $\alpha>\alpha_{rd}$, there exists $0<\mu<1$ such that
\[
\|\rho\|_{L^1(L^q(dx),dt)}\leq C\left(\int_0^T\int_{\mathbb{T}^d}|D(\rho^{\frac{\alpha}{2}})|^2dxdt \right)^{\mu} +C,
\] where $q$ is the conjugate exponent of $r$.
\end{pro}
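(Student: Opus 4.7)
The plan is to work with the substitution $f:=\rho^{\alpha/2}$ and reduce the claim to a purely spatial Gagliardo-Nirenberg-Sobolev interpolation followed by a H\"older estimate in time. The point of this substitution is that $\int_0^T\|Df(\cdot,t)\|_{L^2(\mathbb{T}^d)}^2dt$ is precisely the quantity appearing on the right-hand side of the desired inequality, while the mass conservation identity \eqref{rhodensityadjsec} yields the pointwise-in-time normalization
\[
\|f(\cdot,t)\|_{L^{2/\alpha}(\mathbb{T}^d)}^{2/\alpha}=\int_{\mathbb{T}^d}\rho(x,t)dx=1,
\]
so that $f$ is automatically anchored at the $L^{2/\alpha}$ level uniformly in $t$. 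This replaces the usual $L^p$-interpolation anchor and is the key structural fact.

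Given this, the main step is to apply the Gagliardo-Nirenberg-Sobolev inequality on $\mathbb{T}^d$ with target exponent $s:=2q/\alpha$, base exponent $2/\alpha$, and top exponent $2^*=\frac{2d}{d-2}$ (in low dimension, via the corresponding endpoint embedding on the torus). This gives
\[
\|f\|_{L^s(\mathbb{T}^d)}\leq C\|Df\|_{L^2(\mathbb{T}^d)}^{\theta}\|f\|_{L^{2/\alpha}(\mathbb{T}^d)}^{1-\theta}+C\|f\|_{L^{2/\alpha}(\mathbb{T}^d)},
\]
where $\theta\in(0,1)$ is determined by the scaling relation $\frac{1}{s}=\theta(\frac{1}{2}-\frac{1}{d})+(1-\theta)\frac{\alpha}{2}$. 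Solving this linear equation explicitly, using $\frac{1}{q}-1=-\frac{1}{r}$, yields $\theta=\frac{\alpha d}{r(\alpha d-(d-2))}$. Raising to the power $2/\alpha$ and using $\|f\|_{L^{2/\alpha}}=1$ together with $\|f\|_{L^s}^{2/\alpha}=\|\rho\|_{L^q}$ converts this into the pointwise-in-time bound
\[
\|\rho(\cdot,t)\|_{L^q(\mathbb{T}^d)}\leq C\|Df(\cdot,t)\|_{L^2(\mathbb{T}^d)}^{2\theta/\alpha}+C.
\]

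Finally I would integrate in $t$ over $[0,T]$ and apply H\"older's inequality with conjugate exponents $\alpha/\theta$ and $\alpha/(\alpha-\theta)$ to pull the spatial $L^2$ norm of $Df$ outside, which gives exactly the claimed estimate with $\mu:=\theta/\alpha$. The crux, and the reason the threshold $\alpha_{rd}$ appears in the hypothesis, is to ensure $\theta<\alpha$ so that $\mu\in(0,1)$ and the H\"older exponent $\alpha/\theta$ exceeds $1$. A direct algebraic check shows that $\theta\leq\alpha$ is equivalent to $\alpha\geq 1+\frac{1}{r}-\frac{2}{d}=\alpha_{rd}$, with equality precisely at $\alpha=\alpha_{rd}$; since $\theta(\alpha)$ is decreasing in $\alpha$, the strict assumption $\alpha>\alpha_{rd}$ delivers $\mu\in(0,1)$. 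This threshold identification is the main technical point; the rest is mechanical. A minor caveat is that $f=\rho^{\alpha/2}$ must be treated via the regularized density $\rho^\varepsilon$ introduced in the remark preceding the statement, with a passage to the limit $\varepsilon\to 0$ at the end.
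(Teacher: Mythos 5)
Your proof is correct and follows essentially the same route as the paper: a pointwise-in-time Sobolev/interpolation estimate anchored at the conserved mass $\int\rho\,dx=1$, giving $\|\rho(\cdot,t)\|_{L^q}\leq C\bigl(\int|D(\rho^{\alpha/2})|^2dx\bigr)^{\mu}+C$ with $\mu=\theta/\alpha$, followed by H\"older/Jensen in $t$ using $\mu<1$. Your reformulation in terms of $f=\rho^{\alpha/2}$ (interpolating between $L^{2/\alpha}$ and $L^{2^*}$) is just a change of variables from the paper's interpolation of $\rho$ between $L^{1}$ and $L^{2^*\alpha/2}$, and your identification of the threshold $\alpha_{rd}$ matches.
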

\begin{proof}
Recall that for any $1\leq p_0< p_1<\infty,\, 0<\theta<1$ we have the following interpolation inequality
\[
\|f\|_{L^{p_{\theta}}}\leq\|f\|_{L^{p_1}}^\theta\|f\|_{L^{p_0}}^{1-\theta
},
\]
where $p_{\theta}$ is given by
\[
\frac{1}{p_{\theta}}=\frac{\theta}{p_1}+\frac{1-\theta}{p_0}.
\]
If $d>2$, let $p=2^*,$ where $2^*$ is the Sobolev's conjugate exponent of $2$ given by $\frac{1}{d}=\frac{1}{2}-\frac{1}{2^*}$. If $d=2$
we take $p$ to be a sufficiently large exponent.
Take $p_0=1,$ $p_1=\frac{p\alpha}{2}$.
Let  $q$ be the conjugate exponent of $r$. Note that if $\alpha>\alpha_{rd}$ we have
\[
1<q< p_1.
\]
Then for $p_{\theta}=q$ we have
\[
\theta=\frac{1-\frac{1}{q}}{1-\frac{1}{p_1}}.
\]
By Sobolev's inequality
\[
\left(\int_{\mathbb{T}^d}\rho^{\frac{p\alpha}{2}}(x,t) \right)^{\frac{1}{p}}\leq C \left(\int_{\mathbb{T}^d}|D(\rho^{\frac{\alpha}{2}})(x,t)|^2dx \right)^{\frac{1}{2}} +C,
\]
and so
\[
\|\rho(.,t)\|_{L^{\frac{\alpha p}{2}}(\mathbb{T}^d)}\leq C \left(\int_{\mathbb{T}^d}|D(\rho^{\frac{\alpha}{2}})(x,t)|^2dx \right)^{\frac{1}{\alpha}}+C.
\]
Using $\|\rho(.,t)\|_{L^1}=1$ and the interpolation we get
\[
\|\rho(.,t)\|_{L^{q}(\mathbb{T}^d)}\leq C \left(\int_{\mathbb{T}^d}|D(\rho^{\frac{\alpha}{2}})(x,t)|^2dx \right)^{\mu}+C,
\]
with $\mu=\frac{\theta}{\alpha}$.
For $\alpha>\alpha_{rd}$, we have $\mu<1$.
Then by Jensen's inequality
\[
\|\rho\|_{L^1(L^q(dx),dt)}=\int_0^T\|\rho(.,t)\|_{L^q(\mathbb{T}^d)}\leq C\left(\int_0^T\int_{\mathbb{T}^d}|D(\rho^{\frac{\alpha}{2}})|^2dxdt \right )^{\mu} +C,
\] where $q$ is the conjugate exponent of $r$.
\end{proof}
Combining Corollary \ref{droalf}  and Proposition \ref{rol1lq}, we get
\begin{cor}\label{drhoalestim}
Assume (R\ref{r1})-(R\ref{r4}). Let $w$ and $\rho$ solve  \eqref{main2} and \eqref{rho} respectively. Then, for  for $\alpha>\alpha_{rd}$ and any $\delta_1>0$ there exists $C_{\delta_1}$ such that
\[
\int_0^T\int_{\mathbb{T}^d}|D(\rho^{\frac{\alpha}{2}})|^2dxdt\leq C_{\delta_1}+C\delta_1 \osc(w).
\]
\end{cor}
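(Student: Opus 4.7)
The plan is to combine the two previous results in a direct substitution-and-absorption argument. Write $I := \int_0^T\int_{\mathbb{T}^d}|D(\rho^{\alpha/2})|^2\,dx\,dt$ and $J := \|\rho\|_{L^1(L^q(dx),dt)}$, so that Corollary~\ref{droalf} reads
\[
I \leq C_{\delta_1} + C\delta_1\,J + C\delta_1\,\osc(w),
\]
while Proposition~\ref{rol1lq} (applicable because $\alpha>\alpha_{rd}$) gives
\[
J \leq C\,I^{\mu} + C, \qquad 0<\mu<1.
\]
Plugging the second inequality into the first yields
\[
I \leq \widetilde{C}_{\delta_1} + C\delta_1\,I^{\mu} + C\delta_1\,\osc(w).
\]

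The key observation is that $\mu<1$, so the term $C\delta_1 I^{\mu}$ can be absorbed into the left-hand side by Young's inequality: with conjugate exponents $1/\mu$ and $1/(1-\mu)$,
\[
C\delta_1\,I^{\mu} \leq \tfrac{1}{2}I + C'_{\delta_1},
\]
where $C'_{\delta_1}$ depends on $\delta_1$ and $\mu$ but not on $I$. Substituting and rearranging gives
\[
\tfrac{1}{2}I \leq \widetilde{C}_{\delta_1} + C'_{\delta_1} + C\delta_1\,\osc(w),
\]
which, after renaming the constant, is the stated bound.

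I do not anticipate a real obstacle here, since both ingredients are already proven and the algebraic manipulation is standard. The only mildly delicate point is to make sure the constant hidden in Young's inequality depends only on $\delta_1$ (and on $\mu$, which in turn depends only on $\alpha$, $r$ and $d$), and in particular is independent of $T$ and of the specific solution $w$; this is automatic because the exponent $\mu$ produced by Proposition~\ref{rol1lq} is uniform once $\alpha$ is fixed with $\alpha>\alpha_{rd}$. After this absorption step, the $C\delta_1\,\osc(w)$ term remains untouched on the right-hand side, exactly as required.
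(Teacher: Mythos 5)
Your argument is correct and is exactly the route the paper intends: the paper states this corollary with only the remark ``Combining Corollary~\ref{droalf} and Proposition~\ref{rol1lq}, we get,'' and your substitution of $J\leq CI^{\mu}+C$ into Corollary~\ref{droalf} followed by absorption of $C\delta_1 I^{\mu}$ via Young's inequality (using $\mu<1$) is precisely the implicit computation. Your remark that the Young constant depends only on $\delta_1$ and $\mu$ (hence on $\alpha$, $r$, $d$) correctly fills in the one detail the paper leaves unstated.
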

Furthermore, using this with Proposition \ref{rol1lq} gives
\begin{cor}\label{l1lqrhoest}
Assume (R\ref{r1})-(R\ref{r4}). Let $w$ and $\rho$ solve \eqref{main2} and \eqref{rho} respectively. Then, for $\mu$ as in Proposition \ref{rol1lq}
\[
\|\rho\|_{L^1(L^q(dx),dt)}\leq C+C(\osc(w))^{\mu},
\] where $q$ is the conjugate exponent of $r$.
\end{cor}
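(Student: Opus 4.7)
The plan is essentially to chain the two previous results: Corollary~\ref{drhoalestim} controls $\int_0^T \int_{\mathbb{T}^d}|D(\rho^{\alpha/2})|^2\,dx\,dt$ in terms of $\osc(w)$, and Proposition~\ref{rol1lq} converts such a bound into an estimate on $\|\rho\|_{L^1(L^q(dx),dt)}$. So the whole argument should be almost immediate.

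More concretely, I would fix any $\alpha > \alpha_{rd}$ and apply Corollary~\ref{drhoalestim} with a specific choice of $\delta_1$ (e.g.\ $\delta_1 = 1$) to conclude that
\[
\int_0^T\!\!\int_{\mathbb{T}^d} |D(\rho^{\alpha/2})|^2\,dx\,dt \;\leq\; C + C\,\osc(w),
\]
for some new constant $C$. Next I would feed this directly into the estimate of Proposition~\ref{rol1lq}, which (for the same $\alpha$) yields $\mu \in (0,1)$ with
\[
\|\rho\|_{L^1(L^q(dx),dt)} \;\leq\; C\bigl(C + C\,\osc(w)\bigr)^{\mu} + C.
\]

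The only remaining step is cosmetic: since $\mu \in (0,1)$, the function $t \mapsto t^{\mu}$ is subadditive on $[0,\infty)$, so $(a+b)^{\mu} \leq a^{\mu} + b^{\mu}$. Applying this to $a = C$ and $b = C\,\osc(w)$ and absorbing numerical constants gives
\[
\|\rho\|_{L^1(L^q(dx),dt)} \;\leq\; C + C\,(\osc(w))^{\mu},
\]
which is the stated estimate. There is no real obstacle here; the substantive work was already done in Corollary~\ref{drhoalestim} and Proposition~\ref{rol1lq}, and one only needs to verify that the exponent $\mu$ coming out of the interpolation argument in Proposition~\ref{rol1lq} is the one referenced in the statement. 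I would just note explicitly that the same $\alpha > \alpha_{rd}$ is used in both inputs, so the $\mu$'s match up, and state that $q$ is the conjugate exponent of $r$ from hypothesis (R\ref{r2}).
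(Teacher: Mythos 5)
Your proposal is correct and coincides with the paper's argument, which likewise just chains Corollary \ref{drhoalestim} (with a fixed choice of $\delta_1$) into Proposition \ref{rol1lq} and absorbs constants using $(a+b)^{\mu}\leq a^{\mu}+b^{\mu}$ for $\mu\in(0,1)$. Nothing further is needed.
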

Finally, from Corollaries \ref{dw2ro} and \ref{l1lqrhoest}, we infer
\begin{cor}\label{dwrhofinalest}
Assume (R\ref{r1})-(R\ref{r4}). Let $w$ and $\rho$ solve \eqref{main2} and \eqref{rho} respectively. Then
\[
\int_0^T\int_{\mathbb{T}^d}|Dw|^2\rho(x,t)dxdt\leq C+C \osc(w).
\]

\end{cor}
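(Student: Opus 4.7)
The plan is to simply chain together the two preceding corollaries, \ref{dw2ro} and \ref{l1lqrhoest}, and absorb a sublinear term using a trivial elementary inequality.

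First, I would invoke Corollary \ref{dw2ro}, which gives
\[
\int_0^T\int_{\mathbb{T}^d}|Dw|^2\rho(x,t)\,dx\,dt \leq C\|\rho\|_{L^1(L^q(dx),dt)} + C\,\osc(w).
\]
This reduces the task to controlling $\|\rho\|_{L^1(L^q(dx),dt)}$ by a quantity of the form $C + C\,\osc(w)$.

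Next, I would apply Corollary \ref{l1lqrhoest}, which provides exactly such a bound,
\[
\|\rho\|_{L^1(L^q(dx),dt)} \leq C + C(\osc(w))^{\mu},
\]
for some $0<\mu<1$ as produced by Proposition \ref{rol1lq}. Substituting this into the previous inequality yields
\[
\int_0^T\int_{\mathbb{T}^d}|Dw|^2\rho(x,t)\,dx\,dt \leq C + C(\osc(w))^{\mu} + C\,\osc(w).
\]

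Finally, since $0<\mu<1$, the elementary inequality $s^{\mu}\leq 1 + s$ valid for all $s\geq 0$ allows us to absorb the middle term into $C + C\,\osc(w)$, giving the desired bound. There is no real obstacle here; the substantive work was already carried out in establishing Corollaries \ref{dw2ro} and \ref{l1lqrhoest} via the adjoint representation formula (Proposition \ref{wx0}), the energy estimate for $\rho^{\alpha/2}$ (Proposition \ref{droalfa}), and the Sobolev--interpolation argument (Proposition \ref{rol1lq}). The present corollary is a one-line consequence.
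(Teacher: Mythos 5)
Your proposal is correct and matches the paper's own argument, which likewise obtains the estimate by combining Corollary \ref{dw2ro} with Corollary \ref{l1lqrhoest} and absorbing the sublinear term $(\osc(w))^{\mu}$, $0<\mu<1$, into $C+C\,\osc(w)$. No issues.
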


\begin{pro}\label{lipprop}
Assume (R\ref{r1})-(R\ref{r4}). Let $w$  solve \eqref{main2}.
Then $Lip(w)\leq C.$
\end{pro}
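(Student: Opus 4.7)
The plan is to derive a pointwise bound on $|Dw|$ by applying the adjoint method to the equation satisfied by each partial derivative $v = w_{x_k}$. Differentiating \eqref{main2} in direction $x_k$ gives
\[
\Delta v + D_pF(x,Dw)\cdot Dv + F_{x_k}(x,Dw) = 0.
\]
Pairing with the adjoint density $\rho$ from \eqref{rho} and integrating by parts in both space and time, exactly as in the proof of Proposition \ref{wx0}, produces the representation
\[
v(x_0) \;=\; \int_{\mathbb{T}^d} v(x)\rho(x,T)\,dx \;+\; \int_0^T\!\!\int_{\mathbb{T}^d} F_{x_k}(x, Dw)\,\rho(x,t)\,dx\,dt.
\]
Since $\int_{\mathbb{T}^d} v\,dx = 0$ by periodicity and $\int_{\mathbb{T}^d} \rho(\cdot,T)\,dx = 1$ by \eqref{rhodensityadjsec}, the first term on the right may be rewritten as $\int v(\rho(T)-1)\,dx$ and estimated via Cauchy--Schwarz using the bound (R\ref{r4}) together with the gradient bound on $\rho^{\alpha/2}$ coming from Corollary \ref{drhoalestim}, producing a contribution controlled by $C + C\,\osc(w)^{\mu}$ for some $\mu<1$.

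The integral $\int_0^T\!\!\int F_{x_k}\rho$ is treated by exploiting the decomposition $F_{x_k} = \widehat{F}_{x_k} - \zeta_{x_k}$ suggested by (R\ref{r3}). The contribution from $\widehat{F}_{x_k}$ is controlled by the pointwise bound $|\widehat{F}_{x_k}|\le C + \psi|Dw|^{\beta}$, together with H\"{o}lder's inequality, the integrability $\psi \in L^{2r/(2-\beta)}(\mathbb{T}^d)$, and the estimates $\int_0^T\!\!\int|Dw|^2\rho \le C + C\,\osc(w)$ and $\|\rho\|_{L^1(L^q(dx),dt)} \le C + C\,\osc(w)^{\mu}$ from Corollaries \ref{dwrhofinalest} and \ref{l1lqrhoest}. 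The $\zeta_{x_k}$-term, in which $\zeta$ is only $L^r$, is handled by integrating by parts in $x$ so that the derivative lands on $\rho$; the resulting integral is bounded using the Sobolev-type estimate on $\rho^{\alpha/2}$ from Corollary \ref{drhoalestim}, with $\alpha$ chosen close to $1$, together with $\Delta w = -F(x,Dw)$ and (R\ref{r1}) to turn second-order terms into first-order ones where needed.

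Combining these bounds yields an inequality of the form $|Dw(x_0)|\le C + C\,\osc(w)^{\tau}$ for some $\tau<1$, uniformly in $x_0 \in \mathbb{T}^d$. A parallel argument, applied to the representation of Proposition \ref{wx0} at a maximum and a minimum of $w$, gives a self-improving bound on the oscillation itself, $\osc(w) \le C + C\,\osc(w)^{\tau}$, hence $\osc(w) \le C$; reinserting this into the previous estimate yields $\|Dw\|_{L^{\infty}} \le C$, i.e.\ $\mathrm{Lip}(w)\le C$. The main obstacle is the $\zeta_{x_k}$-term: because $\zeta$ is only in $L^r$, its derivative is a distribution, and handling it requires a delicate interplay between the exponent $\alpha$ (close to $1$) in the gradient estimate on $\rho^{\alpha/2}$ and the conjugate exponent $q$ of $r$, chosen so that every oscillation-dependent quantity enters with a power strictly less than one; a secondary bookkeeping issue is to make sure that, after all Young's-inequality splittings, no term involving $\int\!\!\int|Dw|^2\rho$ or $\int\!\!\int|D^2w|^2\rho$ appears with a coefficient that cannot be absorbed.
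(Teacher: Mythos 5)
Your overall strategy coincides with the paper's: differentiate \eqref{main2}, pair the resulting equation for $w_{x_k}$ with the adjoint density $\rho$, split the source into $\widehat{F}_{x_k}$ and $D_{x_k}\zeta$, move the derivative of $\zeta$ onto $\rho$ via $D_{x_k}\rho=\tfrac{2}{\alpha}\rho^{1-\alpha/2}D_{x_k}(\rho^{\alpha/2})$, invoke the estimates on $D(\rho^{\alpha/2})$ and $\|\rho\|_{L^1(L^q(dx),dt)}$ with $\alpha$ close to $1$, and close by absorption. The one place where you genuinely depart from the paper is the terminal term, and that is where your argument has a gap. You keep the boundary term $\int_{\mathbb{T}^d}v(x)\rho(x,T)\,dx$ and propose to bound $\int v(\rho(T)-1)\,dx$ by Cauchy--Schwarz using (R\ref{r4}) and Corollary \ref{drhoalestim}. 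But Corollary \ref{drhoalestim} controls $\int_0^T\!\!\int_{\mathbb{T}^d}|D(\rho^{\alpha/2})|^2\,dx\,dt$, a space--time integral; it gives no bound on $\rho(\cdot,T)$ (or its gradient) at the fixed terminal time, so $\|\rho(\cdot,T)-1\|_{L^2(\mathbb{T}^d)}$ is not controlled. Averaging over terminal times does not rescue the step either: the available time-averaged bound is $\|\rho\|_{L^1(L^q(dx),dt)}$ with $q=r'<2$ (since $r>d\geq 2$), and $v=w_{x_k}$ is only known to lie in $L^2$ by (R\ref{r4}), so H\"older's inequality cannot pair these exponents. The paper avoids this entirely by multiplying $\eta=D_{x_i}w$ by a cutoff $\phi(t)$ with $\phi(0)=1$, $\phi(T)=0$: the terminal term vanishes and is traded for $\int_0^T\!\!\int\phi'\,D_{x_i}w\,\rho$, which is absorbed via $|D_{x_i}w|\rho\leq\varepsilon|Dw|^2\rho+C_\varepsilon\rho$ together with Corollary \ref{dwrhofinalest} and \eqref{rhodensityadjsec}. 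You need this (or an equivalent) device; as written the terminal term is not estimated.

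Two smaller inaccuracies. First, the closing inequality cannot be put in the form $\osc(w)\leq C+C\,\osc(w)^{\tau}$ with every oscillation-dependent quantity entering at a power strictly below one: the contributions coming from $\varepsilon\int_0^T\!\!\int|Dw|^2\rho$ and from $C\delta_1\osc(w)$ are \emph{linear} in $\osc(w)$ with small coefficients, and the absorption argument must use both the smallness of $\varepsilon,\delta_1$ and the sublinearity $\tfrac{2-\alpha}{\alpha}\theta_1<1$ of the remaining term, exactly as in the paper's final display. Second, no appeal to $\Delta w=-F(x,Dw)$ to ``turn second-order terms into first-order ones'' is needed anywhere: no second derivatives of $w$ arise in this computation.
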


\begin{proof}
 Let $\eta=D_{x_i}w$, then it satisfies the equation
 \[
 \eta_t+D_pF(x,Dw) D\eta+\Delta \eta=-\widehat{F}_{x_i}(x,Dw)+D_{x_i}(\zeta).
 \]
 Take $\phi(t)$ to be smooth with $\phi(0)=1$ and $\phi(T)=0$. Let $v=\phi \eta$,
 then it satisfies
 \[
 v_t+D_pF(x,Dw)\cdot D_xv+\Delta v=-\phi\widehat{F}_{x_i}(x,Dw)+\phi D_{x_i}(\zeta)+\phi'D_{x_i}w.
 \]
 Integrating with respect to $\rho$

 \[
 -v(x_0,0)=\int_0^T\int_{\mathbb{T}^d}-\phi \widehat{F}_{x_i}\rho+\phi D_{x_i}(\zeta)\rho+\phi'D_{x_i}w\rho dxdt.
 \]
Using that $|D_{x_i}w\rho|\leq\varepsilon |Dw|^2\rho+C_{\varepsilon}\rho$ for small $\varepsilon>0$ \begin{equation}
 \label{lipu}
 |v(x_0,0)|\leq\int_0^T\int_{\mathbb{T}^d}C|\widehat{F}_{x_i}|\rho+C\rho  +C\varepsilon |Dw|^2\rho \dx\dt+\int_0^T\left|\int_{\mathbb{T}^d}D_{x_i}(\zeta)\rho \dx\right
 |\dt.
 \end{equation}
 The first term in the right-hand side of \eqref{lipu} can be estimated using (R\ref{r3}) and Corollary \ref{l1lqrhoest}:
  \begin{equation}
 \label{lipufhutbdd}\begin{split}
\int_0^T\int_{\mathbb{T}^d}|\widehat{F}_{x_i}|\rho \dx&\leq \int_0^T\int_{\mathbb{T}^d}C\rho+\psi|Dw|^\beta\rho\dx
\leq \int_0^T\int_{\mathbb{T}^d} C\rho+C_\varepsilon \psi ^\frac{2}{2-\beta}\rho+\varepsilon|Dw|^2\rho\dx\\&
\leq C+C_\varepsilon\|\rho\|_{L^1(L^q(dx),dt)} \|\psi\|_{L^\frac{2r}{2-\beta}}^\frac{2}{2-\beta}+\varepsilon\int_0^T\int_{\mathbb{T}^d}|Dw|^2\rho\dx\\&
\leq C+C(\osc(w))^{\mu}+\varepsilon\int_0^T\int_{\mathbb{T}^d}|Dw|^2\rho\dx.
\end{split}
\end{equation}
Let us now estimate the last term in the right-hand side of \eqref{lipu}. We have
 \[
 \int_{T^d}D_{x_i}(\zeta)\rho dx=-\int_{\mathbb{T}^d}\zeta D_{x_i}(\rho) \dx=\frac{2}{\alpha}\int_{\mathbb{T}^d}\zeta\rho^{1-\alpha/2}D_{x_i}(\rho^{\alpha/2}) \dx.
 \]
Thus
\[
 \int_0^T|\int_{\mathbb{T}^d}D_{x_i}(\zeta)\rho dx|dt\leq C \int_0^T\int_{\mathbb{T}^d}\zeta^2\rho^{2-\alpha} dxdt+C\int_0^T\int_{\mathbb{T}^d}|D(\rho^{\alpha/2})|^2\dx\dt.
\]
We estimate the first term of the previous inequality as follows
\[
\int_{\mathbb{T}^d}\zeta^2\rho^{2-\alpha} dx\leq\|\zeta^2\|_{L^{\frac{r}{2}}(\mathbb{T}^d)}\|\rho^{2-\alpha}\|_{L^{\frac{r}{r-2}}(\mathbb{T}^d)}= \|\zeta\|_{L^r(\mathbb{T}^d)}^2\|\rho\|^{2-\alpha}_{L^{{\frac{(2-\alpha)r}{r-2}}}(\mathbb{T}^d)}.
\]
Then by Sobolev inequality and \eqref{rhodensityadjsec}
\begin{equation}
\|\rho\|_{L^{\frac{2^*\alpha}{2}}(\mathbb{T}^d)}
\leq C\|D(\rho^{\alpha/2})\|_{L^2(\mathbb{T}^d)}^{\frac{2}{\alpha}}
+C.
\end{equation}
If $d>2$, choose now $\alpha>\alpha_{rd}$ so that
$\frac{(2-\alpha)r}{r-2}<\frac{2^*\alpha}{2}=\frac{\alpha d}{d-2}$.
In dimension $2$ replace in the previous condition $2^*$
by a sufficiently large $p$. Note that such choice is possible since for $\alpha=1$ we have
$\frac r {r-2}<\frac d {d-2}$.
Using interpolation we get
\begin{equation}
\|\rho\|_{L^{\frac{(2-\alpha)r}{r-2}}(\mathbb{T}^d)}\leq\|\rho\|_{L^1(\mathbb{T}^d)}^{1-\theta_1}
\|\rho\|^{\theta_1}_{L^{\frac{2^*\alpha}{2}}(\mathbb{T}^d)}\leq C\|D(\rho^{\alpha/2})\|^{\frac{2\theta_1}{\alpha}}_{L^2(\mathbb{T}^d)}
+C,
\end{equation}
where $\theta_1$ is defined by $\frac{r-2}{(2-\alpha)r}=\frac{1-\theta_1}{1}+\frac{2\theta_1}{2^*\alpha}$.
As $\alpha \to 1$ we have $\theta_1\to \frac{d}{r}$. Then if
 $\alpha>\alpha_{rd}$ sufficiently close to $1$ we have
\[
\frac{(2-\alpha)\theta_1}{\alpha}<1.
\]
Then, using Jensen's inequality we get
\begin{equation}
\label{dzr}
\begin{split}
\int_0^T|\int_{\mathbb{T}^d}D(\zeta)\rho dx|dt&\leq C\int_0^T\left (\int_{\mathbb{T}^d}|D(\rho^{\alpha/2})|^2 \right )^{\frac{(2-\alpha)\theta_1}{\alpha}}+C\delta_1 \osc(w)+C
\\
&\leq C\left (\int_0^T\int_{\mathbb{T}^d}|D(\rho^{\frac{\alpha}{2}})|^2dxdt\right )^{\frac{(2-\alpha)\theta_1}{\alpha}}+C\delta_1 \osc(w)+C.
\end{split}
\end{equation}
Note that we can choose $x_0$ and $i$ such that
\[
Lip(w)\leq d |v(x_0, 0)|= d|D_{x_i} w(x_0)|.
\]
Then
combining  the inequalities \eqref{lipu}, \eqref{lipufhutbdd} and \eqref{dzr}, Corollaries \ref{drhoalestim} and \ref{dwrhofinalest},   and using $\osc(w)\leq C Lip(w)$, we obtain
$$
Lip(w)=\sup_{x_0\in\mathbb{T}^d}d |v(x_0)|\leq C+C(\varepsilon+\delta_1) Lip(w)+(C+C Lip(w))^{\frac{2-\alpha}{\alpha}\theta_1}
$$
choosing $\varepsilon,\delta_1$ small and since $\frac{2-\alpha}{\alpha}\theta_1<1$, for $\alpha$ close enough to 1, we obtain the result.
\end{proof}

\begin{cor}
\label{ulip}
Assume (A\ref{qzv})-(A\ref{bdv2}). Let $(u,m,V,\overline{H})$ solve the system \eqref{main}. Assume further the a-priori bounds  $\|g(m)\|_{L^r}\leq C$ for $r>d$. Then there exists a constant $C>0$ such that $\|u\|_ {W^{2,r}(\mathbb{T}^d)},\, \|u\|_{W^{1,{\infty}}(\mathbb{T}^d)}\leq C.$
\end{cor}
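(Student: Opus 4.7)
The plan is to recast the first equation of \eqref{main} in the form \eqref{main2} and apply Proposition \ref{lipprop} with $r$ from the hypothesis, then bootstrap via elliptic regularity.

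\textbf{Setup.} Fix a solution $(u,m,V,\overline{H})$ and treat $m,V,\overline{H}$ as parameters. Define
\[
F(x,p):=H(x,p,m,V)-\overline{H},
\]
so that $u$ solves $\Delta u + F(x,Du)=0$ on $\mathbb{T}^d$. It remains to verify that $F$ satisfies the abstract hypotheses (R\ref{r1})--(R\ref{r4}) with the chosen exponent $r>d$.

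\textbf{Verification of (R\ref{r4})--(R\ref{r2})--(R\ref{r3}).} Assumption (R\ref{r4}) is exactly \eqref{H_0dxbound} in Corollary \ref{ABC}. For (R\ref{r2}), the definition \eqref{lagr} of $L$ yields
\[
D_pF(x,p)\cdot p - F(x,p) = L(x,p,m,V)+\overline{H},
\]
and combining (A\ref{bbl1}), (A\ref{pbbl}), Proposition \ref{bV}, and the bound \eqref{Hbarbound} on $\overline{H}$ produces $D_pF\cdot p - F \geq c|p|^2 + \zeta(x)$ with $\zeta(x)=g(m(x))-C$; the standing hypothesis $\|g(m)\|_{L^r}\leq C$ is exactly $\|\zeta\|_{L^r}\leq C$ for the same $r>d$. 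For (R\ref{r3}), $\widehat{F}_x=\widehat{H}_x$, so (A\ref{bdv2}) gives $|\widehat{F}_x(x,p)|\leq C+C|p|^\beta$ with $0\leq\beta<2$, and we take $\psi$ to be a constant, which lies in every $L^s$.

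\textbf{Verification of (R\ref{r1}).} This is the delicate step. From (A\ref{qbd}) and Proposition \ref{bV} one has $|D_pF(x,p)|^2=|D_pH|^2\leq C+CH_0(x,p,m,V)$. Using (A\ref{qzv}) together with the fact that (A\ref{difp}) forces $H(x,0,m,V)+g(m(x))$ to be bounded, the convexity assumption (A\ref{conv}) applied to $H(\cdot,\cdot,m,V)$ gives
\[
H(x,p,m,V)\leq H(x,0,m,V)+D_pH\cdot p \leq -g(m(x))+C+|D_pH|\,|p|,
\]
so that $H_0(x,p,m,V)\leq C+|D_pH|\,|p|$ after cancelling the $g(m)$ terms. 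Plugging back and using Young's inequality absorbs the half of $|D_pH|^2$ and yields $|D_pF(x,p)|^2\leq C|p|^2+C$, which is (R\ref{r1}).

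\textbf{Conclusion.} Proposition \ref{lipprop} gives $\mathrm{Lip}(u)\leq C$, i.e.\ $\|u\|_{W^{1,\infty}(\mathbb{T}^d)}\leq C$. Then, rewriting the first equation of \eqref{main} as
\[
\Delta u = \overline{H}-\bigl(H(x,Du,m,V)+g(m(x))\bigr)+g(m(x)),
\]
the $L^\infty$ control of $Du$ together with (A\ref{difp}) bounds the bracket in $L^\infty$, while $\|g(m)\|_{L^r}\leq C$ controls the last term. Hence $\|\Delta u\|_{L^r(\mathbb{T}^d)}\leq C$, and standard elliptic regularity on the torus upgrades this to $\|u\|_{W^{2,r}(\mathbb{T}^d)}\leq C$.

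\textbf{Main obstacle.} The only step that is not immediate bookkeeping is the verification of (R\ref{r1}): the assumptions bound $|D_pH|^2$ by $H_0$ but bound $|p|^2$ by $H_0$ rather than the converse, so one must extract the quadratic upper bound on $H_0$ through the convexity of $H$ combined with the cancellation of the $g(m)$ contributions using (A\ref{qzv}) and (A\ref{difp}). Once this is in place, the adjoint machinery of Section~\ref{regadjm} applies essentially verbatim.
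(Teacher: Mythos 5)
Your overall architecture is exactly the paper's: set $F(x,p)=H(x,p,m(x),V(x))-\overline H$, verify (R\ref{r1})--(R\ref{r4}) with $\zeta=g(m)\pm C$, invoke Proposition \ref{lipprop} for the Lipschitz bound, and then use $|\Delta u|\leq|g(m)|+C$ together with the hypothesis $\|g(m)\|_{L^r}\leq C$ and elliptic regularity to get $W^{2,r}$. Your verifications of (R\ref{r2}), (R\ref{r3}), (R\ref{r4}) and the concluding elliptic step coincide with the paper's.

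The divergence is in the verification of (R\ref{r1}), and that is also where the one real flaw sits: you twice invoke (A\ref{difp}) to bound the \emph{value} of $H+g(m)$ (first to claim $H(x,0,m,V)+g(m(x))\leq C$, and again at the end to bound the bracket in $L^\infty$). But (A\ref{difp}) only asserts smoothness in $(x,p)$ with locally uniformly bounded \emph{derivatives}; it gives no anchor for the size of the function itself, so it does not yield either bound. The claim $H(x,0,m,V)+g(m(x))\leq C$ is nonetheless true and can be patched as follows: evaluating (A\ref{bbl1}) at $p=0$ gives $-H(x,0,m,V)=L(x,0,m,V)\geq cH_0(x,0,m,V)+g(m)-C-\delta\int_{\mathbb{T}^d}|V|^2\,dm$, while (A\ref{qzv}) gives $-H(x,0,m,V)\leq -H_0(x,0,m,V)+g(m)+C$; combining these with Proposition \ref{bV} yields $(1+c)H_0(x,0,m,V)\leq C$, and then (A\ref{qzv}) gives the desired bound. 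With that patch your convexity argument for (R\ref{r1}) goes through. The paper's route is cleaner and avoids (A\ref{conv}) entirely: it writes $H=-L+p\cdot D_pH$, applies (A\ref{bbl1}) and Young's inequality to $p\cdot D_pH$, and uses (A\ref{qbd}) to cancel the resulting $H_0$ terms, obtaining $H_0(x,p,m,V)\leq C|p|^2+C$ for all $p$ in one stroke. The same remark applies to your final step: the $L^\infty$ bound on $H+g(m)$ along $Du$ should be justified by (A\ref{qzv}) together with $-C\leq H_0(x,Du,m,V)\leq C|Du|^2+C$ (the lower bound coming from (A\ref{pbbl}) and Proposition \ref{bV}), not by (A\ref{difp}).
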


\begin{proof}
The property $\|u\|_{W^{1,{\infty}}(\mathbb{T}^d)}\leq C$ follows from Proposition \ref{lipprop}, estimate \eqref{H_0dxbound} and the fact that
\[
F(x,p)=H(x,p,m(x),V(x))-\overline{H}
\]
satisfies the hypothesis (R\ref{r1})-(R\ref{r3})  with $\zeta(x)=g(m(x))+C$, as we show now.

Let us check (R\ref{r1}).
Using Assumptions (A\ref{bbl1}), (A\ref{qbd})  and Proposition \ref{bV}, we get
\begin{equation*}\begin{split} H(x,p,m,V)&=-L(x,p,m,V)+D_pH(x,p,m,V)\cdot p\\&
\leq -cH_0(x,p,m,V)-g(m)+C+\frac{c}{C}|D_pH(x,p,m,V)|^2+C|p|^2\\&
\leq -cH_0(x,p,m,V)-g(m)+cH_0(x,p,m,V)+C|p|^2+C\\&
=C|p|^2-g(m)+C.
\end{split}\end{equation*}
This and  (A\ref{qzv}) imply that
$$H_0(x,p,m,V)\leq C|p|^2+C,$$ and then, by (A\ref{qbd}) that
$$|D_pH(x,p,m,V)|^2\leq C|p|^2+C,$$ i.e.,
$F(x,p)=H(x,p,m(x),V(x))-\overline{H}$ satisfies (R\ref{r1}).
The property (R\ref{r2}) is a consequence of  (A\ref{bbl1}) and Proposition \ref{bV}. Assumption (A\ref{bdv2}) and estimate \eqref{H_0dxbound} imply (R\ref{r3}).

Once we know that $\|u\|_{W^{1,{\infty}}(\mathbb{T}^d)}\leq C$, from the first equation of \eqref{main} and $|\overline{H}|\leq C$, we infer that
\[
|\Delta u|\leq |g(m)|+C.
\]
Since by assumption  $\|g(m)\|_{L^r}\leq C$, from the elliptic theory we get  $\|u\|_ {W^{2,r}(\mathbb{T}^d)}\leq C$.
\end{proof}

The next Corollary generalizes the result in Corollary \ref{ulpsc}.

\begin{cor}
\label{u3reg}
Assume in addition to the hypothesis of  Corollary \ref{ulip}, that $\|D(g(m))\|_{L^2}\leq C.$  Then there exists a constant $C>0$ such that $\|u\|_ {W^{3,2}(\mathbb{T}^d)}\leq C.$
\end{cor}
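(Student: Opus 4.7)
The plan is to differentiate the Hamilton--Jacobi equation in \eqref{main} once with respect to $x_i$, express $\Delta u_{x_i}$ as a sum of terms that are each individually $L^2(\mathbb{T}^d)$-bounded thanks to Corollary \ref{ulip} and the new hypothesis $\|D(g(m))\|_{L^2}\leq C$, and then conclude via standard $W^{2,2}$ elliptic regularity for the Laplacian on the flat torus.

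Concretely, using (A\ref{difp}) I would write $H(x,p,m,V)=\widetilde H(x,p,m,V)-g(m(x))$ with $\widetilde H$ smooth in $(x,p)$ with locally uniformly bounded derivatives. Note that then $\widehat H_{x_i}=\widetilde H_{x_i}$ and $D_pH=\widetilde H_p$, since the pointwise $g(m(x))$ term does not depend on $p$. Plugging this decomposition into the first equation of \eqref{main} and differentiating in $x_i$ yields the pointwise identity
\begin{equation*}
\Delta u_{x_i}
\;=\; D_{x_i}(g(m))
\;-\; \widehat H_{x_i}(x,Du,m,V)
\;-\; D_pH(x,Du,m,V)\cdot Du_{x_i}.
\end{equation*}

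Each term on the right is controlled in $L^2(\mathbb{T}^d)$. The first is bounded by hypothesis. For the second, Corollary \ref{ulip} gives $\|Du\|_{L^\infty}\le C$, so (A\ref{bdv2}) provides a uniform pointwise bound $|\widehat H_{x_i}(\cdot,Du,m,V)|\le C+C|Du|^\beta\le C$. For the third, within the proof of Corollary \ref{ulip} one already obtains $H_0(x,Du,m,V)\le C|Du|^2+C\le C$ pointwise; combining this with (A\ref{qbd}) and Proposition \ref{bV} gives $\|D_pH(\cdot,Du,m,V)\|_{L^\infty}\le C$, while $\|Du_{x_i}\|_{L^2}\le\|D^2u\|_{L^r}\le C$ follows from Corollary \ref{ulip} with $r>d\ge 2$.

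Consequently $\|\Delta u_{x_i}\|_{L^2}\le C$. Since also $\|u_{x_i}\|_{L^2}\le\|u\|_{W^{1,\infty}}\le C$ by Corollary \ref{ulip}, standard elliptic regularity on $\mathbb{T}^d$ gives $\|u_{x_i}\|_{W^{2,2}}\le C$ for every $i$, and hence $\|u\|_{W^{3,2}(\mathbb{T}^d)}\le C$. There is no substantial obstacle: the $L^2$-bound on $D(g(m))$ is exactly the ingredient missing in the gap between the $W^{2,r}$ regularity already obtained in Corollary \ref{ulip} and the $W^{3,2}$ regularity claimed here, so the proof is essentially bookkeeping after one differentiation.
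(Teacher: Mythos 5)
Your proposal is correct and follows essentially the same route as the paper: the paper's proof likewise differentiates the first equation to get $D\Delta u=-\widehat{H}_x+D(g(m))-D^2uD_pH$ and bounds the right-hand side in $L^2$ using Corollary \ref{ulip}, Assumption (A\ref{bdv2}), and the hypothesis on $D(g(m))$. Your version merely spells out the term-by-term estimates (in particular the $L^\infty$ bound on $D_pH$) in more detail than the paper does.
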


\begin{proof}
We have
$$
D\Delta u=-D(H(x,Du,m,V))=-\widehat{H}_x+D(g(m))-D^2uD_pH
$$
which combined with the Corollary \ref{ulip} and Assumption (A\ref{bdv2}) gives $\|D\Delta u\|_{L^2(\mathbb{T}^d)}\leq C$, hence $\|u\|_ {W^{3,2}(\mathbb{T}^d)}\leq C.$
\end{proof}
Combining this with the Corollary \ref{gmint} we get:
\begin{cor}
\label{lnml}
Assume (A\ref{qzv})-(A\ref{bdv2}). Let $(u,m,V,\overline{H})$ solve the system \eqref{main}. Furthermore, suppose that  one of the following assumptions is satisfied:
\begin{itemize}
\item[(i)] (A\ref{g}\ref{lnm}
\item[(ii)]  (A\ref{g}\ref{mgama}, with any $\gamma>0$ if $d\leq 4,$ and $\gamma<\frac{1}{d-4}$ if $d\geq 5.$

\end{itemize}
Then there exists a constant $C>0$ such that $\|u\|_ {W^{1,{\infty}}(\mathbb{T}^d)},\,\|u\|_ {W^{2,r}(\mathbb{T}^d)},\,\|u\|_ {W^{3,2}(\mathbb{T}^d)}\leq C.$
\end{cor}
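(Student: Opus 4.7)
The plan is to assemble this corollary directly from three earlier results, with essentially no new work beyond verifying that the hypotheses of each cited result are met.

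First, I would apply Corollary \ref{gmint}. Its hypotheses are exactly (A\ref{qzv})--(A\ref{bdv1}) together with either (A\ref{g}\ref{lnm}) or (A\ref{g}\ref{mgama}) in the stated ranges of $\gamma$. Since we are assuming the stronger set (A\ref{qzv})--(A\ref{bdv2}) plus the same condition on $g$, Corollary \ref{gmint} yields an exponent $r>d$ and a constant $C>0$ with
\[
\|g(m)\|_{L^r(\mathbb{T}^d)}\leq C, \qquad \|D(g(m))\|_{L^2(\mathbb{T}^d)}\leq C.
\]

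Next, I would invoke Corollary \ref{ulip}. Its precise hypotheses are (A\ref{qzv})--(A\ref{bdv2}) together with an a-priori bound $\|g(m)\|_{L^r}\leq C$ for some $r>d$. Both are now in hand: the structural assumptions are those of the statement, and the $L^r$ bound on $g(m)$ is exactly the first estimate produced in the previous paragraph. Hence Corollary \ref{ulip} gives
\[
\|u\|_{W^{1,\infty}(\mathbb{T}^d)}+\|u\|_{W^{2,r}(\mathbb{T}^d)}\leq C,
\]
for the same $r>d$ supplied by Corollary \ref{gmint}.

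Finally, I would apply Corollary \ref{u3reg}. Its hypothesis is exactly the conclusion of Corollary \ref{ulip} plus the additional bound $\|D(g(m))\|_{L^2}\leq C$, which is the second estimate from Corollary \ref{gmint}. Corollary \ref{u3reg} then yields $\|u\|_{W^{3,2}(\mathbb{T}^d)}\leq C$, completing the proof.

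There is no real obstacle: the corollary is a bookkeeping combination, and the only thing to be careful about is that the value of $r>d$ from Corollary \ref{gmint} is the same one propagated into Corollary \ref{ulip}, so that the $W^{2,r}$ bound in the conclusion is asserted with an admissible exponent. Since both hypotheses (i) and (ii) on $g$ enter only through Corollary \ref{gmint}, no further case-splitting is required at this stage.
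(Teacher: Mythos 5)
Your proposal is correct and is precisely the paper's argument: the paper introduces Corollary \ref{lnml} with the phrase ``Combining this [Corollary \ref{u3reg}, which subsumes Corollary \ref{ulip}] with the Corollary \ref{gmint} we get,'' i.e.\ the $L^r$ bound on $g(m)$ feeds Corollary \ref{ulip} and the $L^2$ bound on $D(g(m))$ feeds Corollary \ref{u3reg}, exactly as you describe. Your remark about propagating the same exponent $r>d$ into the $W^{2,r}$ conclusion is the right point of care and matches the paper's intent.
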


\begin{cor}
\label{mlinfty}
Let $(u,m,V,\overline{H})$ solve the system \eqref{main}. Assume either
\begin{itemize}
\item[A.]
 (A\ref{qzv})-(A\ref{bdv2}) and  one of the following assumptions is satisfied:
\begin{itemize}
\item[(i)] (A\ref{g}\ref{lnm}
\item[(ii)]  (A\ref{g}\ref{mgama}, with any $\gamma>0$ if $d\leq 4,$ and $\gamma<\frac{1}{d-4}$ if $d\geq 5.$

\end{itemize}
\end{itemize}
or
\begin{itemize}
\item[B.]
 (H1),(H2) holds  with $\epsilon\in[0,\epsilon_0]$,  (A\ref{g}\ref{lnm} and $d\leq 3$.
\end{itemize}
Then there exists a constant $C>0$ such that
\[\|\ln m\|_{W^{1,\infty}(\mathbb{T}^d)} \leq C.\]  In particular there exists a uniform constant $\bar{m}>0$ such that $m\geq \bar{m}.$  Furthermore, for any $q> 1$  there exists a constant $C_q>0$ such that
$$\|u\|_{W^{2,q}(\mathbb{T}^d)},\,\|m\|_ {W^{2,q}(\mathbb{T}^d)}\leq C_q.$$
\end{cor}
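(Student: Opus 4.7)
The plan is to reduce everything to the single question of controlling $m$ from above and below in $L^\infty$. Once $\bar m \leq m \leq M$ with positive constants, the first equation in \eqref{main} becomes uniformly elliptic with bounded (or well-controlled) right-hand side, and the Fokker--Planck equation $\Delta m = \operatorname{div}(Vm)$ is a linear divergence-form PDE in $m$ with drift of prescribed integrability. Standard Calder\'on--Zygmund and Schauder theory, alternated between the two equations and coupled with the relation $V = D_pH(x,Du,m,V)$, will then be used to bootstrap $u$ and $m$ to $W^{2,q}$ for every $q>1$, after which Sobolev embedding produces $Dm\in L^\infty$ and $D\ln m = Dm/m \in L^\infty$.

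In Case B the lower bound $\|1/m\|_{L^\infty}\leq C$ is already provided by Theorem \ref{1/mintegrab}, and Corollaries \ref{uw2p} and \ref{ulpsc} give $u\in W^{1,\infty}\cap W^{2,q}$ for every $q\geq 1$ when $d\leq 3$. From the explicit form $V=\alpha(x)Du+D_pG(x,Du,m,V)$ and the smallness estimate $|D_pG|^2\leq C+\varepsilon\int|V|^2dm$, Proposition \ref{bV} immediately yields $V\in L^\infty$. Hence the bootstrap in Case B is only a matter of applying Calder\'on--Zygmund to the two equations in turn. In Case A I would first exploit Corollary \ref{lnml}, which gives $u\in W^{1,\infty}\cap W^{2,r}\cap W^{3,2}$ for some $r>d$, together with $\|g(m)\|_{L^r}\leq C$. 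Combining the first equation of \eqref{main} with the quasi-variational identity (A\ref{qzv}) gives $H_0 = \overline H - \Delta u + g(m) + O(1) \in L^r$. Inserting this into (A\ref{qbd}) and using Proposition \ref{bV} to absorb $\delta\int|V|^2dm$, one gets $V\in L^{2r}$ with $2r>2d$.

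With $V$ in this integrability class, the equation $\Delta m - \operatorname{div}(Vm)=0$ falls into the setting of divergence-form elliptic PDEs with subcritical drift, so De Giorgi--Nash--Moser iteration gives $m\in L^\infty$ and the Harnack inequality gives $\sup m \leq C \inf m$; combined with the normalization $\int m\,dx=1$ this yields $m\geq\bar m>0$. Once $m$ is trapped between two positive constants $g(m)$ is bounded, and a standard alternating bootstrap — apply Calder\'on--Zygmund to $\Delta u = \overline H - H(x,Du,m,V)$ using the current integrability of $V$, then differentiate $V = D_pH(x,Du,m,V)$ (using the convexity (A\ref{conv}) as an implicit-function argument to solve for $DV$) to upgrade $V$, then apply Calder\'on--Zygmund to $\Delta m = V\cdot Dm + m\operatorname{div}(V)$ — gives $u,m\in W^{2,q}$ for every $q$. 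Finally, taking $q>d$, Sobolev embedding yields $Dm\in L^\infty$ and hence $\|\ln m\|_{W^{1,\infty}}\leq C$.

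The main obstacle is the Case A lower bound: making rigorous the elliptic regularity step for the Fokker--Planck equation when $V$ is only known a priori in $L^{2r}$, and verifying that the chain $u\in W^{2,r}\Rightarrow H_0\in L^r\Rightarrow V\in L^{2r}$ is quantitative enough to invoke Harnack. The implicit relation $V=D_pH(x,Du,m,V)$ is the other delicate point in the bootstrap, since to improve the integrability of $DV$ one must control $D_VD_pH$ and use (A\ref{conv}) (or, in Case B, the smallness of $\varepsilon$ in (H1)--(H2)) to solve for $DV$ in terms of $D^2u$ and $Dm$. Once these two technical points are handled, all remaining estimates are routine elliptic bootstrap.
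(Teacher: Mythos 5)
Your proposal is correct in outline and reaches the stated conclusions, but by a genuinely different route from the paper at the two decisive steps. For the two-sided bound on $m$, the paper runs a Moser-type iteration directly on the Fokker--Planck equation: multiplying by $m^r$ (resp.\ $m^{-r}$) and using the $L^\infty$ bound on $D_pH$ coming from $\|Du\|_{L^\infty}\le C$, it derives $\bigl(\int m^{\beta r}\bigr)^{1/\beta}\le C(r^2+1)\int m^r$ and iterates to get $\|m\|_{L^\infty}\le C$; the lower bound is then seeded by Proposition \ref{1/mintegrabgeneralprop} ($\int m^{-r_0}\le C$, which is only available under (A\ref{g}\ref{lnm}) and iterated the same way. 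You instead invoke De Giorgi--Nash--Moser plus the Harnack inequality for $\Delta m-\div(Vm)=0$ and combine $\sup m\le C\inf m$ with $\int m\,dx=1$. This is legitimate, and in fact easier than you fear: once Corollary \ref{lnml} gives $Du\in L^\infty$, (A\ref{qbd}) together with $H_0\le C|p|^2+C$ (established in the proof of Corollary \ref{ulip}) puts $V$ in $L^\infty$, not merely $L^{2r}$, so the Harnack inequality you need is the textbook one for bounded drift. Your route has the added virtue of producing the lower bound $m\ge\bar m$ uniformly in both cases (i) and (ii), whereas the paper's argument for $\|1/m\|_{L^\infty}$ is written only for the logarithmic nonlinearity. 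For the gradient bound on $\ln m$, the paper applies the adjoint-method Lipschitz estimate (Proposition \ref{lipprop}) to $v=\ln m$, which solves $\Delta v+|Dv|^2-b\cdot Dv-\zeta=0$ with $b=D_pH$ and $\zeta=\div b$; you instead bootstrap $m$ to $W^{2,q}$ with $q>d$ by Calder\'on--Zygmund and conclude $Dm\in L^\infty$ by Sobolev embedding. Both work; yours is more elementary, the paper's reuses machinery it has already built and orders the conclusions differently ($D\ln m\in L^\infty$ first, then $m\in W^{2,q}$).

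One small correction: the "implicit-function" step you flag for differentiating $V=D_pH(x,Du,m,V)$ is not where the paper's hypotheses point you. In the paper's framework the $m$- and $V$-dependence of $D_pH$ is through the function-space arguments, so $D_xV=\widehat{H}_{xp}+D^2_{pp}H\,D^2u$, and (A\ref{bdv1}) together with $Du\in L^\infty$ bounds $\widehat{H}_{xp}$; convexity (A\ref{conv}) plays no role here. If one did allow pointwise dependence of $H$ on $V(x)$, the relevant tool would be the invertibility of $Id-\mathcal{B}^2_{\lambda}$ from (B\ref{invHpv}), not (A\ref{conv}). With that adjustment your alternating bootstrap closes without difficulty.
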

\begin{proof}
Take any $r\in\Rr$ multiply the second equation of \eqref{main} by $m^r$, $r\neq 0$, or by $\ln m$ for $r=0$ and integrate by parts:
\begin{equation*} \int_{\mathbb{T}^d} m^{r-1}|Dm|^2-m^rD_pH\cdot Dmdx=0, \forall r\in\Rr.\end{equation*}
Then using Corollaries \ref{lnml} or \ref{ulpsc} and H\"{o}lder's inequality:
\begin{equation*} \int_{\mathbb{T}^d} m^{r-1}|Dm|^2dx\leq C\int_{\mathbb{T}^d}m^r|Dm|\dx\leq C \left(\int_{\mathbb{T}^d} m^{r-1}|Dm|^2dx\right)^{\frac{1}{2}}\left(\int_{\mathbb{T}^d} m^{r+1}dx\right)^{\frac{1}{2}},\end{equation*}
thus
\begin{equation}\label{dm}c_r\int_{\mathbb{T}^d} |Dm^{\frac{r+1}{2}}|^2dx= \int_{\mathbb{T}^d} m^{r-1}|Dm|^2dx\leq C\int_{\mathbb{T}^d} m^{r+1}dx.\end{equation}
Note that $c_rm^{r-1}|Dm|^2=|Dm^{\frac{r+1}{2}}|^2$ with $c_r=\frac{(r+1)^2}{4}$. By Sobolev's Theorem, if $m^{\frac{r+1}{2}}\in H^1(\mathbb{T}^d)$ then $m^{\frac{r+1}{2}}\in L^{2^*}(\mathbb{T}^d)$ and
\begin{equation*}
\left(\int_{\mathbb{T}^d} m^{\frac{2^*}{2}(r+1)}\right)^\frac{2}{2^*}\leq C\int_{\mathbb{T}^d} c_r m^{r-1}|Dm|^2+m^{r+1}dx,\end{equation*} where $2^*=\frac{2d}{d-2}.$ Then we have
\begin{equation*}
 \left(\int_{\mathbb{T}^d} m^{\frac{2^*}{2}(r+1)}\right)^\frac{2}{2^*}\leq C(r^2+1)\int_{\mathbb{T}^d} m^{r+1}dx.
 \end{equation*}
 Thus for any $r>0$
 \begin{equation}\label{mlinftyinteg}
 \left(\int_{\mathbb{T}^d} m^{\beta r}\right)^\frac{1}{\beta}\leq C(r^2+1)\int_{\mathbb{T}^d} m^{r}dx,
  \end{equation} and
   \begin{equation}\label{1divmlinftyinteg}
 \left(\int_{\mathbb{T}^d} \frac{1}{m^{\beta r}}\right)^\frac{1}{\beta}\leq C(r^2+1)\int_{\mathbb{T}^d} \frac{1}{m^{r}}dx.
 \end{equation}
 where $\beta=\frac{2^*}{2}>1.$
 Since $\int_{\mathbb{T}^d} m\dx=1$,  arguing as in the last part of the proof of Theorem \ref{1/mintegrab}, from \eqref{mlinftyinteg} we get $\|m\|_{L^{\infty}(\mathbb{T}^d)}\leq C.$

Next, if  (A\ref{g}\ref{lnm} holds, then by Proposition \ref{1/mintegrabgeneralprop} we know that there exists $r_0>0$ such that $\int_{\mathbb{T}^d} \frac{1}{m^{r_0}}dx\leq C$.
 Hence, again arguing as in Theorem \ref{1/mintegrab}, from  \eqref{1divmlinftyinteg} we get $\left\|\frac{1}{m}\right\|_{L^{\infty}(\mathbb{T}^d)} \leq C$.

In both cases A and B the Lipschitz estimates on $u$ from Corollaries \ref{lnml} and \ref{ulpsc}, and the estimates just proven imply that
 $\|\Delta u\|_{L^{\infty}(\mathbb{T}^d)}\leq C$. In particular
\begin{equation*}\|u\|_{W^{2,q}(\mathbb{T}^d)}\leq C_q\quad\text{for any }q>1.\end{equation*}

Now, let us show that $\|\log m\|_{W^{1,\infty}(\mathbb{T}^d)} \leq C$. The function $v=\log m$ is solution of
$$\Delta v +|D v|^2-b(x)\cdot Dv-\zeta(x)=0,$$ where $b(x)=D_pH(x,Du,m,V)$ and $\zeta=\div(b).$ By Proposition \ref{lminh1}, we know that $\|Dv\|_{L^2(\mathbb{T}^d)}\leq C.$ Moreover, the $W^{2,q}$ estimates  on $u$ and (A\ref{bdv1}) imply that $D_x(b),\,\zeta\in L^q(\mathbb{T}^d)$ for any $q>1$.
Hence, the Hamiltonian $F(x,p)=|p|^2-b(x)\cdot p-\zeta(x)$ satisfies Assumptions (R\ref{r1})-(R\ref{r3}). Proposition \ref{lipprop} then gives $Lip(v)\leq C.$
In particular, from  $\|\log m\|_{L^{\infty}(\mathbb{T}^d)} \leq C$ we infer that there exists $\bar{m}>0$ such that $m\geq \bar{m}$. Moreover, the estimates
$\left\|\frac{Dm}{m}\right\|_{L^{\infty}(\mathbb{T}^d)},  \| m\|_{L^{\infty}(\mathbb{T}^d)} \leq C$ imply $\| Dm\|_{L^{\infty}(\mathbb{T}^d)} \leq C$.

Finally, from  the equation for $m$
\[
\Delta m= bDm+\div(b)m,
\] and the estimates just proven we get $\|m\|_{W^{2,q}(\mathbb{T}^d)} \leq C_q$ for any $q> 1$.
\end{proof}



\begin{teo}
\label{usmooth}
Let $(u,m,V,\overline{H})$ solve the system \eqref{main}. Assume either
\begin{itemize}
\item[A.]
 (A\ref{qzv})-(A\ref{bdv2}) and  one of the following assumptions is satisfied:
\begin{itemize}
\item[(i)] (A\ref{g}\ref{lnm}
\item[(ii)]  (A\ref{g}\ref{mgama}, with any $\gamma>0$ if $d\leq 4,$ and $\gamma<\frac{1}{d-4}$ if $d\geq 5.$
\end{itemize}
\end{itemize}
or
\begin{itemize}
\item[B.]
 (H1),(H2) holds  with $\epsilon\in[0,\epsilon_0]$,  (A\ref{g}\ref{lnm} and $d\leq 3$.
\end{itemize}
Then there exist constants $C_{k,q}$ such that $\|u\|_{W^{k,q}(\mathbb{T}^d)}, \|m\|_{W^{k,q}(\mathbb{T}^d)},\|V\|_{W^{k,q}(\mathbb{T}^d,\Rr^d)}\leq C_{k,q}$ for any $q,k\geq1$.
\end{teo}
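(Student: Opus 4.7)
The starting point is Corollary~\ref{mlinfty}, which in both settings A and B provides $m \geq \bar m > 0$, $\|Du\|_{L^\infty(\mathbb{T}^d)} \leq C$, and $\|u\|_{W^{2,q}(\mathbb{T}^d)}, \|m\|_{W^{2,q}(\mathbb{T}^d)} \leq C_q$ for every $q > 1$. Combined with Assumption (A\ref{qbd}) (respectively (H1)) and Proposition~\ref{bV}, the third equation $V = D_pH(x,Du,m,V)$ already yields $\|V\|_{L^\infty(\mathbb{T}^d)} \leq C$. The plan is to bootstrap these bounds to all Sobolev orders by alternating elliptic regularity on the three equations of \eqref{main}. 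Fix once and for all $q_0 > d$, so that $W^{k,q_0}(\mathbb{T}^d)$ is a Banach algebra for every $k \geq 1$ and embeds continuously into $C^{k-1,\alpha}$; the other values of $q$ will then follow by Sobolev embedding.

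The decisive structural remark is that $m$ and $V$ enter $H$ only through non-local, functional arguments. Assumption (A\ref{difp}) in case A, respectively the explicit form (H1)-(H2) in case B, asserts that the maps
\[
(x,p) \mapsto H(x,p,m,V)+g(m(x)), \qquad (x,p) \mapsto D_pH(x,p,m,V)
\]
are $C^\infty$ with derivatives locally uniformly bounded. Together with the $L^\infty$ control on $Du$ and $V$, the chain rule applied to these $C^\infty$ functions of $(x,p)$ composed with $x \mapsto Du(x)$ gives, for every $k \geq 1$, estimates of the form
\[
\bigl\|H(\cdot,Du,m,V)\bigr\|_{W^{k,q_0}} \leq C\bigl(\|u\|_{W^{k+1,q_0}},\|m\|_{W^{k,q_0}}\bigr),
\]
\[
\bigl\|D_pH(\cdot,Du,m,V)\bigr\|_{W^{k,q_0}} \leq C\,\|u\|_{W^{k+1,q_0}},
\]
the extra $\|m\|_{W^{k,q_0}}$ term in the first bound being contributed by $g(m(x))$, which is smooth in $m$ since $m \geq \bar m > 0$ is bounded above and below.

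We now induct on $k \geq 1$ with hypothesis
\[
\|u\|_{W^{k+1,q_0}},\ \|m\|_{W^{k+1,q_0}},\ \|V\|_{W^{k,q_0}} \leq C_k.
\]
The base case $k=1$ is Corollary~\ref{mlinfty} combined with the composition bound applied to $V = D_pH(x,Du,m,V)$. For the inductive step, first apply the composition bound to $\Delta u = -H(x,Du,m,V)+\overline{H}$ to conclude $\Delta u \in W^{k,q_0}$, so elliptic regularity gives $\|u\|_{W^{k+2,q_0}} \leq C$. Next, reapply the composition bound to $V = D_pH(x,Du,m,V)$ using the improved $Du \in W^{k+1,q_0}$ to obtain $\|V\|_{W^{k+1,q_0}} \leq C$. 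Finally, expand $\operatorname{div}(Vm) = V\cdot Dm + m\,\operatorname{div} V$; since $W^{k+1,q_0}$ is an algebra, this sits in $W^{k,q_0}$, so elliptic regularity applied to $\Delta m = \operatorname{div}(Vm)$ pushes $m$ to $W^{k+2,q_0}$. Iterating yields bounds at all orders in $W^{k,q_0}$, and hence, by Sobolev embedding, in every $W^{k,q}$ for $q \in [1,\infty)$.

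The only non-routine point is the structural composition remark: one must check that the non-local arguments $m$ and $V$ do not contribute any hidden $x$-dependence beyond the explicit slot of $H$. In case A this is precisely the content of (A\ref{difp}); in case B it is immediate from the splitting $H_0(x,p,m,V) = \alpha(x)|p|^2/2 + G(x,p,m,V)$ in (H1), combined with the pointwise bounds in (H2). Once this is secured, the remainder is a standard elliptic bootstrap.
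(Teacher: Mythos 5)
Your proposal is correct and follows essentially the same route as the paper: both start from Corollary \ref{mlinfty} (which supplies $m\geq\bar m$, $\|Du\|_{L^\infty}\leq C$ and the $W^{2,q}$ bounds), use the smoothness of $(x,p)\mapsto H(x,p,m,V)+g(m(x))$ from (A\ref{difp}) (resp.\ (H1)--(H2)) to control compositions, and then run an elliptic bootstrap cycling through the three equations of \eqref{main}. The paper merely carries out the first step of the bootstrap explicitly ($u\in W^{3,q}$, then $m,V\in W^{2,q}$) and invokes "a bootstrap argument" for the rest, whereas you have written out the induction, including the Banach-algebra choice $q_0>d$; this is a presentational difference only.
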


\begin{proof}
Corollary \ref{mlinfty}  gives \ $\|m \|_{W^{1,\infty}}\leq C$ and $\|u\|_{ W^{2,q}}\leq C_q$ for every $1<q<\infty$. Differentiating the first equation in \eqref{main} yields
\[
D\Delta u=-D(H(x,Du,m,V))=-\widehat{H}_x+D(g(m))-D^2uD_pH\text{ is bounded in }L^q
\]
thus $\|u\|_{ W^{3,q}}\leq C_{3,q}$ for all $1<q<\infty.$

Therefore, from assumption (A\ref{difp}) and the second and third equations of \eqref{main}, we get
$\|m\|_{ W^{2,q}},\,\|V\|_{ W^{2,q}}\leq C_{2,q}$ for all $1<q<\infty.$ A bootstrap argument completes the proof of the theorem.
\end{proof}


\section{Existence by continuation method}
\label{exist}

To prove the existence of smooth solutions to \eqref{main} let us write it in an equivalent form
\begin{equation*}
\begin{cases}
\Delta m-\div(D_pH(x, Du,m,V)m)=0\\
\Delta u+H(x, Du,m,V)=\overline{H}\\
V=D_pH(x, Du,m,V),
\end{cases}
\end{equation*}
and consider a parameterized family of Hamiltonians:
 \[
 H_{\lambda}(x,p,m,V)=\lambda H(x,p,m,V)+(1-\lambda)\left(\frac{|p|^2}{2}-g(m)\right),\quad 0\leq \lambda\leq 1,
 \]
with the corresponding system of PDE's:
 \begin{equation}
\label{mainl}
\begin{cases}
\Delta m_{\lambda}-\div(D_pH_{\lambda}(x,Du_{\lambda},m_{\lambda},V_{\lambda})m_{\lambda})=0\\
\Delta u_{\lambda}+H_{\lambda}(x,Du_{\lambda},m_{\lambda},V_{\lambda})=\overline{H}_{\lambda}\\
V_{\lambda}=D_pH_{\lambda}(x,Du_{\lambda},m_{\lambda},V_{\lambda})\\
\int_{\mathbb{T}^d}u_{\lambda}\dx=0\\
\int_{\mathbb{T}^d}m_{\lambda}\dx=1.
\end{cases}
\end{equation}
First let us start with some notation and hypothesis. Let
\[
\dot{H}^k(\mathbb{T}^d,\Rr)=\{\,f\in H^k(\mathbb{T}^d,\Rr)|\int_{\mathbb{T}^d}f\dx=0\,\}.
\]
Consider the Hilbert space $F^k=\dot{H}^k(\mathbb{T}^d,\Rr)\times H^k(\mathbb{T}^d,\Rr)\times L^2(\mathbb{T}^d,\Rr^d)\times\Rr$ with the norm
\[
\|w\|^2_{F^k}=\|\psi\|^2_{\dot{H}^k(\mathbb{T}^d,\Rr)}+\|f\|^2_{H^k(\mathbb{T}^d,\Rr)}+\|W\|^2_{L^2(\mathbb{T}^d,\Rr^d)}+|h|^2,
\]
for $w=(\psi,f,W,h)\in F^k.$ We assume that $H$ can be extended from the space $\chi(\mathbb{T}^d)$ to the space $L^2(\mathbb{T}^d,\Rr^d).$ Note that by Sobolev's embedding theorem, $H$ is well defined on the set of positive functions $m\in H^k(\mathbb{T}^d,\Rr)$ with big enough $k$. We denote this set by $H^k_+(\mathbb{T}^d,\Rr)$, it is well defined for large $k$s and is an open subset in $H^k(\mathbb{T}^d,\Rr).$\\
For a point $I=(x,p,m,V)\in\mathbb{T}^d\times\Rr^d \times H^k_+(\mathbb{T}^d,\Rr)\times L^2(\mathbb{T}^d,\Rr^d)$ we define $\mathcal{A}^0_{\lambda,I}\colon\Rr^d\to \Rr,\mathcal{B}^0_{\lambda,I}\colon\Rr^d\to \Rr^d$ by
\[
\mathcal{A}^0_{\lambda,I}(w)=D_pH_{\lambda}(x,p,m,V)w,\quad
\mathcal{B}^0_{\lambda,I}(w)=D^2_{pp}H_{\lambda}(x,p,m,V)w,
\]
$\mathcal{A}^1_{\lambda,I}\colon H^k(\mathbb{T}^d,\Rr)\to \Rr$, $\mathcal{B}^1_{\lambda,I}\colon H^k(\mathbb{T}^d,\Rr)\to \Rr^d$ by

\[
\mathcal{A}^1_{\lambda,I}(f)=D_mH_{\lambda}(x,p,m,V)(f)+g'(m(x))f(x),\quad
\mathcal{B}^1_{\lambda,I}(f)=D^2_{pm}H_{\lambda}(x,p,m,V)(f),
\]
and $\mathcal{A}_{\lambda,I}^2\colon L^2(\mathbb{T}^d,\Rr^d)\to \Rr$, $\mathcal{B}_{\lambda,I}^2\colon L^2(\mathbb{T}^d,\Rr^d)\to \Rr^d$ by
\[
\mathcal{A}^2_{\lambda,I}(W)=D_{V}H_{\lambda}(x,p,m,V)(W),\quad
\mathcal{B}^2_{\lambda,I}(W)=D^2_{pV}H_{\lambda}(x,p,m,V)(W).
\]
In principle $\mathcal{A}^1_{\lambda,I}(f)$ is only defined for a smooth $f$, but we are implicitly assuming that the term $g'(m(x))f(x)$ cancels a corresponding term in $D_mH_{\lambda}(p,x,m,V)(f)$, as will be required in hypothesis $B$\ref{HvHmreg}.
\renewcommand{\labelenumi}{({\bf B\arabic{enumi}})}
\begin{enumerate}
\item
\label{Hcont}
We assume that for $(p,m,V)\in\Rr^d \times H^k_+(\mathbb{T}^d,\Rr)\times L^2(\mathbb{T}^d,\Rr^d)$ we have $H(x,p,m,V)\in H^k(\mathbb{T}^d,\Rr)$ for every $k$ big enough. Note that for big $k$ ( $k>d$), $m\in H^k_+(\mathbb{T}^d,\Rr)$ implies $g(m)\in H^k(\mathbb{T}^d,\Rr).$

We further assume that $H(x,p,m,V), D^2_{pp}H(x,p,m,V), D^2_{px}H(x,p,m,V)$ are continuous in $m$ with respect to the uniform convergence, and in $V$ with respect to the convergence in $L^2(\mathbb{T}^d,\Rr^d).$ We assume $H(x,p,m,V)$ and $D_{p}H(x,p,m,V)$ have Fr\'echet derivatives in $V$, thus the operators $\mathcal{A}^2_{\lambda,I}(W),
\mathcal{B}^2_{\lambda,I}(W)$ are well defined.

\item
  \label{HvHmreg}
We assume that for any $f\in H^k(\mathbb{T}^d,\Rr)$ and $W\in L^2(\mathbb{T}^d,\Rr^d)$ the functions $\mathcal{A}^1_{\lambda,I}(f),$ $\mathcal{B}^1_{\lambda,I}(f),$ $\mathcal{A}^2_{\lambda,I}(W),$ $\mathcal{B}^2_{\lambda,I}(W)$ are smooth in $x$ and $p$.
\item
\label{HvHmdb}
For any positive integer $l$, any point $(m,V)\in H^k_+(\mathbb{T}^d,\Rr)\times L^2(\mathbb{T}^d,\Rr^d)$ and any number $R>0$, there exists a constant $C(l,m,V,R)$ such that
\[
|D^l_{x,p}\mathcal{A}^1_{\lambda}(f)|, |D^l_{x,p}\mathcal{B}^1_{\lambda}(f)|\leq C(l
,m,V,R)\|f\|_{L^2}\quad \forall f\in L^2(\mathbb{T}^d,\Rr), \text{ for all } x\in\mathbb{T}^d, |p|\leq R,
\]
and
\[
|D^l_{x,p}\mathcal{A}^2_{\lambda}(W)|, |D^l_{x,p}\mathcal{B}^2_{\lambda}(W)|\leq C(l,m,V,R)\|W\|_{L^2}\quad \forall W\in L^2(\mathbb{T}^d,\Rr^d),\text{ for all } x\in\mathbb{T}^d,|p|\leq R.
\]
\suspend{enumerate}
Because of the structure of $H_{\lambda}$ it suffices to check that both (B\ref{HvHmreg}) and (B\ref{HvHmdb}) hold when $\lambda=1$.

Thus for a point $(P,m,V)\in C^{\infty}(\mathbb{T}^d,\Rr^d)\times C^{\infty}(\mathbb{T}^d,\Rr)\times L^2(\mathbb{T}^d,\Rr^d),$ we can define the operators
$\mathcal{A}^1_{\lambda},\mathcal{B}^1_{\lambda}\colon H^k(\mathbb{T}^d,\Rr)\to C^{\infty}(\mathbb{T}^d,\Rr)$
by
\[
\mathcal{A}^1_{\lambda}(f)(x)=\mathcal{A}^1_{\lambda,I(x)}(f),\text{ and }\mathcal{B}^1_{\lambda}(f)(x)=\mathcal{B}^1_{\lambda,I(x)}(f),
\]
where $I(x)=(x,P(x),m,V)$.

Similarly, we define the operators $\mathcal{A}^2_{\lambda},\mathcal{B}^2_{\lambda}\colon L^2(\mathbb{T}^d,\Rr^d)\to C^{\infty}(\mathbb{T}^d,\Rr^d)$
by
\[
\mathcal{A}^2_{\lambda}(W)(x)=\mathcal{A}^2_{\lambda,I(x)}(W),\text{ and }\mathcal{B}^2_{\lambda}(W)(x)=\mathcal{B}^2_{\lambda,I(x)}(W).
\]
\resume{enumerate}
\item\label{invHpv}
We assume further that the linear mapping $Id-\mathcal{B}^2_{\lambda}\colon L^2(\mathbb{T}^d,\Rr^d)\to L^2(\mathbb{T}^d,\Rr^d)$ is invertible. Since $\mathcal{B}^2_{\lambda}=\lambda\mathcal{B}^2_1$, it is sufficient for the invertibility of $Id-\mathcal{B}^2_{\lambda}$ to have $\|\mathcal{B}^2_1\|_{L^2\to L^2}<1$.
\suspend{enumerate}
We consider now the linearization of \eqref{main} at the point $(\lambda_0,I_{\lambda_0})$
in the direction $(\psi,f,W,\bar h)$
 \begin{equation}
\label{linr}
\begin{cases}
\Delta f-\div\left(V_{\lambda_0}f\right)-\div\left[\left(\mathcal{B}^0_{\lambda_0,I_{\lambda_0}}(D\psi)+
\mathcal{B}^1_{\lambda_0,I_{\lambda_0}}(f)+
\mathcal{B}^2_{\lambda_0,I_{\lambda_0}}(W)\right)m_{\lambda_0}\right]=0\\
\Delta \psi+\mathcal{A}^0_{\lambda_0,I_{\lambda_0}}(D\psi)+\mathcal{A}^1_{\lambda_0,I_{\lambda_0}}(f)-g'(m_{\lambda_0})f+
\mathcal{A}^2_{\lambda_0,I_{\lambda_0}}(W)-\overline{h}=0\\
W=\mathcal{B}^0_{\lambda_0,I_{\lambda_0}}(D\psi)+
\mathcal{B}^1_{\lambda_0,I_{\lambda_0}}(f)+
\mathcal{B}^2_{\lambda_0,I_{\lambda_0}}(W),\\
\int_{\mathbb{T}^d}\psi\dx=0\\
\int_{\mathbb{T}^d}f\dx=0.
\end{cases}
\end{equation}
Where $I_{\lambda_0}(x)=(x,Du_{\lambda_0}(x),m_{\lambda_0},V_{\lambda_0}).$
Multiplying the second equation by $f$ and subtracting the first equation multiplied by $\psi$ and integrating by parts we get:
\begin{equation}
\begin{split}
0=\int_{\mathbb{T}^d}[&f\mathcal{A}^1_{\lambda_0,I_{\lambda_0}(x)}(f)-g'(m_{\lambda_0})f^2+
f\mathcal{A}^2_{\lambda_0,I_{\lambda_0}(x)}(W)-m_{\lambda_0}D\psi\mathcal{B}^0_{\lambda_0,I_{\lambda_0}(x)}(D\psi(x))\\&-
m_{\lambda_0}D\psi\mathcal{B}^1_{\lambda_0,I_{\lambda_0}(x)}(f)-
m_{\lambda_0}D\psi\mathcal{B}^2_{\lambda_0,I_{\lambda_0}(x)}(W)]\dx,
\end{split}
\end{equation}
where we used $\mathcal{A}^0_{\lambda_0,I_{\lambda_0}(x)}(D\psi)=V_{\lambda_0}D\psi(x).$

For a point $I(x)=(x,P(x),m,V)$ where $(P,m,V)\in H^k(\mathbb{T}^d,\Rr^d)\times H^k_+(\mathbb{T}^d,\Rr)\times L^2(\mathbb{T}^d,\Rr^d)$
we define $\mathcal{H}_{\lambda,I}\colon H^k(\mathbb{T}^d,\Rr^d)\times H^k(\mathbb{T}^d,\Rr)\times L^2(\mathbb{T}^d,\Rr^d)\to\Rr$ by
\begin{equation}
\begin{split}
\mathcal{H}_{\lambda,I}(Q,f,W)=\int_{\mathbb{T}^d}[&mQ\mathcal{B}^0_{\lambda,I(x)}(Q)+
mQ\mathcal{B}^1_{\lambda,I(x)}(f)+g'(m)f^2\\&-f\mathcal{A}^1_{\lambda,I(x)}(f)+
mQ\mathcal{B}^2_{\lambda,I(x)}(W)-
f\mathcal{A}^2_{\lambda,I(x)}(W)]\dx.
\end{split}
\end{equation}
Note that $\mathcal{H}_0(Q ,f,W)=\int_{\mathbb{T}^d}m|Q|^2+g'(m)|f(x)|^2\dx$  and
$
\mathcal{H}_{\lambda}=(1-\lambda)\mathcal{H}_{0}+\lambda\mathcal{H}_{1}.
$

\resume{enumerate}
\item\label{monotcond}
We suppose that there exists a constant $C$ such that for any $I(x)=(x,Du(x),m,V)$, where $(u,m,V,\overline{H})$ is a solution
to \eqref{mainl}, and for all $\lambda\in[0,1]$:
\[
\mathcal{H}_{\lambda, I}(Q,f,W)\geq \theta\int_{\mathbb{T}^d}m|Q|^2+|f(x)|^2 -C(W-\mathcal{B}^0_{\lambda,I(x)}(Q)-
\mathcal{B}^1_{\lambda,I(x)}(f)-
\mathcal{B}^2_{\lambda,I(x)}(W))^2\dx.
\]
\end{enumerate}
This condition holds when $\lambda=0$.\\

Let
\[
F^k_+=\dot{H}^k(\mathbb{T}^d,\Rr)\times H^k_+(\mathbb{T}^d,\Rr)\times L^2(\mathbb{T}^d,\Rr^d)\times\Rr,
\]
by a classical solution to \eqref{mainl} we mean a tuple
$(u_{\lambda},m_{\lambda},V_{\lambda},\overline{H}_{\lambda})\in\bigcap\limits_k F^k_+$.
\begin{teo}
\label{maint}
Assume the Assumptions  (B\ref{Hcont})-(B\ref{monotcond}) hold. Furthermore, suppose that
 either
\begin{itemize}
\item[A.]
 (A\ref{qzv})-(A\ref{bdv2}) and  one of the following assumptions is satisfied:
\begin{itemize}
\item[(i)] (A\ref{g}\ref{lnm}
\item[(ii)]  (A\ref{g}\ref{mgama}, with any $\gamma>0$ if $d\leq 4,$ and $\gamma<\frac{1}{d-4}$ if $d\geq 5.$
\end{itemize}
\end{itemize}
or
\begin{itemize}
\item[B.]
 (H1),(H2) holds  with $\epsilon\in[0,\epsilon_0]$,  (A\ref{g}\ref{lnm} and $d\leq 3$.
\end{itemize}
Then there exists a classical solution $(u,m,V,\overline{H})$ to \eqref{main}.
\end{teo}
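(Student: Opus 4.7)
The strategy is the standard continuation method applied to the homotopy $H_\lambda = \lambda H + (1-\lambda)(|p|^2/2 - g(m))$ for $\lambda \in [0,1]$. Define
\[
\Lambda = \{\lambda \in [0,1] : \text{\eqref{mainl} admits a classical solution } (u_\lambda, m_\lambda, V_\lambda, \overline{H}_\lambda) \in \bigcap_k F^k_+\},
\]
and show that $\Lambda$ is nonempty, open, and closed in $[0,1]$; the inclusion $1 \in \Lambda$ will yield the desired solution to \eqref{main}. Non-emptiness is immediate: at $\lambda = 0$, the Hamiltonian is $|p|^2/2 - g(m)$, which is independent of $V$ and decoupled in the sense that $D_pH_0 = p$, so $u \equiv 0$, $m \equiv 1$, $V \equiv 0$, $\overline{H} = -g(1)$ is an explicit smooth solution and $0 \in \Lambda$.

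For openness, fix $\lambda_0 \in \Lambda$ and view \eqref{mainl} as $\Phi(\lambda, u, m, V, \overline{H}) = 0$ for a $C^1$ map $\Phi \colon [0,1] \times F^{k+2}_+ \to F^k$ for $k$ sufficiently large, the regularity and Fréchet differentiability following from (B\ref{Hcont})--(B\ref{HvHmdb}). The linearization $D_{(u,m,V,\overline{H})}\Phi(\lambda_0, \cdot)$ is precisely the system \eqref{linr} (with a right-hand side). To apply the implicit function theorem it suffices to prove this map is an isomorphism. For injectivity, the third equation of the homogeneous version of \eqref{linr} makes the residual $W - \mathcal{B}^0_{\lambda_0,I_{\lambda_0}}(D\psi) - \mathcal{B}^1_{\lambda_0,I_{\lambda_0}}(f) - \mathcal{B}^2_{\lambda_0,I_{\lambda_0}}(W)$ vanish pointwise, and the integral identity preceding (B\ref{monotcond}) gives $\mathcal{H}_{\lambda_0, I_{\lambda_0}}(D\psi, f, W) = 0$. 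Assumption (B\ref{monotcond}) then yields $D\psi \equiv 0$ and $f \equiv 0$; the zero-mean constraint forces $\psi \equiv 0$; the invertibility of $Id - \mathcal{B}^2_{\lambda_0}$ in (B\ref{invHpv}) forces $W \equiv 0$; and finally $\overline{h} = 0$ by the second equation. Surjectivity is then obtained by Fredholm theory, since after eliminating $W$ via (B\ref{invHpv}) the map on $(\psi, f)$ is a compact perturbation of the direct sum of two invertible second-order elliptic operators (together with a one-dimensional piece for $\overline{h}$ fixed by the compatibility condition $\int f\,dx = 0$).

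For closedness, take $\lambda_n \in \Lambda$ with $\lambda_n \to \lambda^*$. The key observation is that all structural assumptions used in Theorem \ref{usmooth}, i.e.\ (A\ref{qzv})--(A\ref{bdv2}) in case A or (H1)--(H2) in case B, are preserved uniformly under the homotopy because $|p|^2/2 - g(m)$ trivially satisfies the same assumptions (in case B with $\alpha \equiv 1$, $G \equiv 0$) and the class is stable under convex combinations. Consequently Theorem \ref{usmooth} yields uniform bounds $\|u_{\lambda_n}\|_{W^{k,q}}, \|m_{\lambda_n}\|_{W^{k,q}}, \|V_{\lambda_n}\|_{W^{k,q}} \leq C_{k,q}$, Corollary \ref{ABC} bounds $|\overline{H}_{\lambda_n}|$, and Corollary \ref{mlinfty} provides $m_{\lambda_n} \geq \bar m > 0$ uniformly in $n$. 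A diagonal Arzel\`a--Ascoli argument then extracts a subsequence converging in $\bigcap_k F^k_+$, and passing to the limit in \eqref{mainl} gives a classical solution at $\lambda^*$, so $\lambda^* \in \Lambda$.

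The main obstacle is the openness step, specifically verifying that the linearization \eqref{linr} is a Fredholm isomorphism between the appropriate Sobolev spaces. Injectivity is clean once one recognizes that (B\ref{monotcond}) is exactly the coercivity needed after eliminating the $W$-equation via (B\ref{invHpv}); the delicate part is choosing function spaces so that the algebraic-but-nonlocal $W$-relation and the elliptic $(\psi, f)$-subsystem sit together in a Fredholm framework. Checking that the convex-combination Hamiltonian $H_\lambda$ inherits the structural hypotheses with $\lambda$-independent constants, needed for the uniform a priori estimates in the closedness step, is comparatively routine but must be verified for each of the assumptions (A\ref{qzv})--(A\ref{bdv2}) (or (H1)--(H2)).
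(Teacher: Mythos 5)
Your overall strategy coincides with the paper's: the same homotopy $H_\lambda$, the same set $\Lambda$ with the explicit base solution $(0,1,0,-g(1))$ at $\lambda=0$, the same closedness argument via the $\lambda$-uniform a priori bounds of Theorem \ref{usmooth} plus the lower bound $m_{\lambda_n}\geq\bar m$, and the same use of (B\ref{monotcond}) together with the vanishing of the residual $W-\mathcal{B}^0(D\psi)-\mathcal{B}^1(f)-\mathcal{B}^2(W)$ and the invertibility of $Id-\mathcal{B}^2_{\lambda}$ to kill the kernel of the linearization. The injectivity argument is correct and matches the paper.

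The genuine gap is in your surjectivity step. You assert that, after eliminating $W$, the linearized map on $(\psi,f)$ is ``a compact perturbation of the direct sum of two invertible second-order elliptic operators.'' It is not: the linearized Fokker--Planck equation in \eqref{linr} contains the term $-\div\bigl(m_{\lambda_0}\,\mathcal{B}^0_{\lambda_0,I_{\lambda_0}}(D\psi)\bigr)=-\div\bigl(m_{\lambda_0}D^2_{pp}H_{\lambda_0}D\psi\bigr)$, which is a genuinely second-order operator in $\psi$, of the same order as the principal part $\Delta f$; as a map $H^k\to H^{k-2}$ it is bounded but not compact. (The nonlocal terms $\mathcal{A}^1,\mathcal{A}^2,\mathcal{B}^1,\mathcal{B}^2$ are smoothing by (B\ref{HvHmdb}) and hence harmless; the obstruction is this local off-diagonal coupling.) So Fredholm theory does not apply in the form you invoke it, and injectivity alone does not give you surjectivity. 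One could try to rescue the idea by observing that the $(\psi,f)$-block is elliptic in the Douglis--Nirenberg sense (the principal symbol matrix is triangular with determinant $|\xi|^4$) and running an index-continuation argument down to $\lambda=0$, but you would then still have to fit the nonlocal algebraic $W$-equation into that framework and justify index stability along the path --- none of which is in your write-up.

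The paper avoids Fredholm theory entirely. It defines the bilinear form $B_\lambda[w_1,w_2]=\int w_2\cdot\mathcal{L}_\lambda(w_1)$ on $F^1\times F^1$, represents it as $(Aw_1,w_2)_F$ by Riesz, and proves the quantitative lower bound $\|Aw\|_F\geq c\|w\|_F$ by a contradiction argument that tests against four carefully chosen elements ($\tilde w_n$ to control the residual $\tilde W_n$, $\bar w_n$ to invoke (B\ref{monotcond}) and get $D\psi_n,f_n\to 0$ in $L^2$, $\check w_n$ to upgrade $f_n\to 0$ to $H^1$, and $\breve w$ to get $h_n\to 0$). This lower bound gives closed range, a separate orthogonality argument (again using (B\ref{invHpv}) and (B\ref{monotcond})) gives density of the range, and elliptic regularity bootstraps the weak solution to $F^k$. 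That quantitative estimate --- not just injectivity --- is what your proposal is missing, and it is the heart of the openness step.
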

\begin{proof}
For big enough $k$ we can define
$
E\colon\Rr\times F^k_+\to F^{k-2}
$
by
\[
E(\lambda,u,m,V,\overline{H})=\left(
\begin{array}{c}
\Delta m-\div(D_pH_{\lambda}(x,Du,m,V)m)\\
-\Delta u+H_{\lambda}(x,Du,m,V)+\overline{H}\\
V-D_pH_{\lambda}(x,Du,m,V)\\
-\int_{\mathbb{T}^d}m\dx+1
\end{array}
\right).
\]
Then \eqref{mainl} can be written as $E(\lambda,u_{\lambda},m_{\lambda},V_{\lambda},\overline{H}_{\lambda})=0.$
The partial derivative of $E$ at a point $v_{\lambda}=(u_{\lambda},m_{\lambda},V_{\lambda},\overline{H}_{\lambda})$
\[
\mathcal{L}_{\lambda}:=D_2E(\lambda,v_{\lambda})\colon F^k\to F^{k-2},
\]
is given by
\[
\mathcal{L}_{\lambda}(w)(x)=\left(
\begin{array}{c}
\Delta f(x)-\div(V_{\lambda}(x)f(x))-\div([\mathcal{B}^0_{\lambda,I_{\lambda}(x)}(D\psi(x))+
\mathcal{B}^1_{\lambda,I_{\lambda}(x)}(f)+
\mathcal{B}^2_{\lambda,I_{\lambda}(x)}(W)]m_{\lambda}(x))\\
-\Delta \psi(x)-\mathcal{A}^0_{\lambda,I_{\lambda}(x)}(D\psi(x))-\mathcal{A}^1_{\lambda_0,I_{\lambda}(x)}(f)+g'(m_{\lambda_0}(x))f(x)-
\mathcal{A}^2_{\lambda,I_{\lambda}(x)}(W)+h\\
W(x)-\mathcal{B}^0_{\lambda,I_{\lambda}(x)}(D\psi(x))-
\mathcal{B}^1_{\lambda,I_{\lambda}(x)}(f)-
\mathcal{B}^2_{\lambda,I_{\lambda}(x)}(W)\\
-\int_{\mathbb{T}^d}f\dx
\end{array}
\right),
\]
where $I(x)=(Du_{\lambda}(x),x,m_{\lambda},V_{\lambda})$ and $w=(\psi,f,W,h)\in F^k$. Note that $\mathcal{L}_{\lambda}$ is well defined for any $k>1.$

Note also that for a classical solution $(u_{\lambda},m_{\lambda},V_{\lambda},\overline{H}_{\lambda})$ to \eqref{mainl}, we get from the third equation that $V_{\lambda}\in\bigcap\limits_k F_k$. Define the set
\[
\Lambda=\{\,\lambda|\quad 0\leq\lambda\leq 1,\,\eqref{mainl} \text{ has a classical solution } (u_{\lambda},m_{\lambda}, V_{\lambda}, \Hh_{\lambda})\,\}.
\]

Note that $0\in\Lambda$, with $(u_0,m_0,V_0, \Hh_0)\equiv(0,1,0,-g(1)).$ Our purpose is to prove $\Lambda=[0,1].$
Let $\lambda_k\in [0,1]$, $\lambda_k\to\lambda_0.$ It is easy to see that the Assumptions $(A1)-(A11)$ for $H$ imply the
corresponding properties for $H_{\lambda}$ with uniform constants for any $\lambda\in[0,1].$ Thus the results of the previous sections (Theorem \ref{usmooth}) and Sobolev's embedding theorems imply that we can bound uniformly derivatives of any order of the solutions $u_{\lambda_k},m_{\lambda_k}$, and also the $C^1$ norm of $V_{\lambda_k}$. Thus we can assume that there exist functions $u,m,V$ and a number $\overline{H}$, such that $u_{\lambda_k}\to u$,$m_{\lambda_k}\to m$ in $H^l(\mathbb{T}^d)$ for every integer $l$, and hence in $C^l(\mathbb{T}^d)$ for every $l$, and also $V_{\lambda_k}\to V$ in $L^2(\mathbb{T}^d)$ and $\overline{H}_{\lambda_k}\to \overline{H}$. Passing to the limit in \eqref{mainl} for $\lambda=\lambda_k$ and using Assumption (B\ref{Hcont}) we get that $(u,m,V,H)$ is a  classical solution to \eqref{mainl} for $\lambda=\lambda_0.$ From $m_{\lambda_k}\geq \bar{m}$ we have $m>0.$ This proves $\lambda_0\in\Lambda$, thus $\Lambda$ is closed.
To prove that $\Lambda$ is open we need to prove that $\mathcal{L}_{\lambda}$ is invertible in order to use an implicit function theorem. For this let $F=F^1$. For $w_1,w_2 \in F$ with smooth components we can define

\[
B_{\lambda}[w_1,w_2]=\int_{\mathbb{T}^d} w_2\cdot \mathcal{L}_{\lambda}(w_1).
\]
Using integration by parts we have for $w_1,w_2$ smooth,
\begin{equation}
\begin{split}
B_{\lambda}[w_1,w_2]=\int_{\mathbb{T}^d}
[&m_{\lambda}\mathcal{B}^0_{\lambda,I_{\lambda}(x)}(D\psi_1)D\psi_2+
m_{\lambda}\mathcal{B}^1_{\lambda,I_{\lambda}(x)}(f_1)D\psi_2+
m_{\lambda}\mathcal{B}^2_{\lambda,I_{\lambda}(x)}(W_1)D\psi_2\\&+\mathcal{A}^0_{\lambda,I_{\lambda}(x)}(D\psi_2)f_1
-\mathcal{A}^0_{\lambda,I_{\lambda}(x)}(D\psi_1)f_2+g'(m_{\lambda})f_1f_2-\mathcal{A}^1_{\lambda,I_{\lambda}(x)}(f_1)f_2\\&-
\mathcal{A}^2_{\lambda,I_{\lambda}(x)}(W_1)f_2+D\psi_1Df_2-Df_1D\psi_2
+h_1f_2 -h_2f_1+W_1W_2\\&-\mathcal{B}^0_{\lambda,I_{\lambda}(x)}(D\psi_1)W_2-
\mathcal{B}^1_{\lambda,I_{\lambda}(x)}(f_1)W_2-
\mathcal{B}^2_{\lambda,I_{\lambda}(x)}(W_1)W_2]\dx.
\end{split}
\end{equation}
This last expression is well defined on $F\times F.$ Thus it defines a bilinear form $B_{\lambda}\colon F\times F\to \Rr$.
\begin{clm}
$B$ is bounded $|B_{\lambda}[w_1,w_2]|\leq C\|w_1\|_F\|w_2\|_F$.
\end{clm}
We use the Assumption (B\ref{HvHmdb}) and Holder's inequality on each summand.
\begin{clm}
There exists a linear bounded mapping $A\colon F\to F$ such that $B_{\lambda}[w_1,w_2]=(Aw_1,w_2)_F$.
\end{clm}
For each fixed element $w\in F$, the operator $w_1\mapsto B_{\lambda}[w_1,w]$ is a bounded linear functional on $F$; whence the Riesz Representation Theorem ensures the existence of a unique element $\nu_1\in F$ such that
\[
B[w_1,w]=(\nu_1,w)_F, \text{ for all } w\in H.
\]
Let us define the operator $A\colon F \to F$ by $Aw_1=\nu_1$, so
\[
B_{\lambda}[w_1,w_2]=(Aw_1,w_2)\quad (w_1,w_2\in F).
\]
It is easy to see that $A$ is linear. Furthermore
\[
\|Aw_1\|^2_F=(Aw_1,Aw_1)=B_{\lambda}[w_1,Aw_1]\leq C\|w_1\|_F\|Aw_1\|_F.
\]
Thus $\|Aw_1\|_F\leq C\|w_1\|_F$, and so $A$ is bounded.
\begin{clm}
There exists a positive constant $c$ such that $\|Aw\|_F\geq c\|w\|_F$ for all $w\in F.$
\end{clm}
If the previous claim were false there would exist a sequence $w_n\in F$ with $\|w_n\|_F=1$ such that $Aw_n\to 0.$ Let $w_n=(\psi_n,f_n,W_n,h_n)$ and $\tilde{w}_n=(0,0,\tilde{W}_n,0)$ where
\[
\tilde{W}_n(x)=W_n(x)-\mathcal{B}^0_{\lambda,I_{\lambda}(x)}(D\psi_n(x))-
\mathcal{B}^1_{\lambda,I_{\lambda}(x)}(f_n)-
\mathcal{B}^2_{\lambda,I_{\lambda}(x)}(W_n).
\]
Assumption (B\ref{HvHmdb}) gives $\|\tilde{w}_n\|_F\leq C\|w_n\|_F=C$, thus we have
\[
\|\tilde{W}_n\|^2_{L^2}=B_{\lambda}[w_n,\tilde{w}_n]=(Aw_n,\tilde{w}_n)\to 0.
\]
Hence $\tilde{W}_n\to 0$ in $L^2.$
Let now $\bar{w}_n=(\psi_n,f_n,0,h_n)$ then, using Assumption (B\ref{monotcond}),
\[
-C\|\tilde{W}_n\|^2_{L^2}+ \theta\int_{\mathbb{T}^d}\bar{m}|D\psi_n|^2+|f_n|^2\dx\leq \mathcal{H}_{\lambda, I_{\lambda}}(D\psi_n,f_n,W_n)=B_{\lambda}[w_n,\bar{w}_n]\to 0.
\]
Thus $\psi_n\to 0$ in $\dot{H}^1_0$ and $f_n\to 0$ in $L^2$. This, combined with $\tilde{W}_n\to 0$, implies $W_n\to 0$. Taking $\check{w}_n=(f_n-\int f_n,0,0,0)\in F$ we get
\begin{equation*}
\begin{split}
&\int_{\mathbb{T}^d}[-|Df_n|^2+m_{\lambda}\mathcal{B}^0_{\lambda,I_{\lambda}(x)}(D\psi_n)Df_n+
m_{\lambda}\mathcal{B}^1_{\lambda,I_{\lambda}(x)}(f_n)Df_n+\\
&m_{\lambda}\mathcal{B}^2_{\lambda,I_{\lambda}(x)}(W_n)Df_n+\mathcal{A}^0_{\lambda,I_{\lambda}(x)}(Df_n)f_n]\dx=B[w_n,\check{w}_n]
=(Aw_n,\check{w}_n),
\end{split}
\end{equation*}
using the expressions for $\mathcal{A}^0_{\lambda,I_{\lambda}(x)}, \mathcal{B}^0_{\lambda,I_{\lambda}(x)}$, Assumption (B\ref{HvHmdb}) and Cauchy's inequality we get
\[
\frac{1}{2}\|Df_n\|^2_{L^2(\mathbb{T}^d)}
-C\left(\|D\psi_n\|^2_{L^2(\mathbb{T}^d)}+\|f_n\|^2_{L^2(\mathbb{T}^d)}+\|W_n\|^2_{L^2(\mathbb{T}^d,\Rr^d)}\right)\leq-(Aw_n,\check{w}_n)\to 0,
\]
were $C$ depends only on $u_{\lambda},m_{\lambda},V_{\lambda}$ and $H_{\lambda}$, thus since $D\psi_n,f_n,W_n\to 0$ in $L^2$ we get that $f_n \to 0$ in $H^1(\mathbb{T}^d).$
Now taking $\breve{w}=(0,1,0,0)$ we get
\[
\int_{\mathbb{T}^d}[-\mathcal{A}^0_{\lambda,I_{\lambda}(x)}(D\psi_n)+g'(m_{\lambda})f_n-\mathcal{A}^1_{\lambda,I_{\lambda}(x)}(f_n)-
\mathcal{A}^2_{\lambda,I_{\lambda}(x)}(W_n)]\dx+h_n=B[w_n,\breve{w}]
=(Aw_n,\breve{w})\to 0,
\]
using the expressions for $\mathcal{A}^0_{\lambda,I_{\lambda}(x)}, \mathcal{B}^0_{\lambda,I_{\lambda}(x)}$, the Assumption (B\ref{HvHmdb}) and the fact that $D\psi_n,f_n,W_n\to 0$ in $L^2$ we get $h_n\to 0$. We conclude that $w_n\to 0$, which contradicts with $\|w_n\|_F=1$.
\begin{clm}
$R(A)$ is closed in $F$.
\end{clm}
If $Au_n\to w$ in $F$ then $c\|u_n-u_m\|_F\leq \|Au_n-Au_m\|_F\to 0$ as $n,m\to \infty$. Therefore
 $u_n$ converges to some $u\in F$, then $Au=w$ this proves that $R(A)$ is closed.
\begin{clm}
$R(A)=F$.\end{clm}
Suppose $R(A)\neq F$, then since $R(A)$ is closed in $F$ there exists $w\neq 0$ such that $w\bot R(A)$ in $F$. Let $w=(\psi,f,W,h)$, take $\tilde{w}=(\psi,f,\tilde{W},h)$ where $\tilde{W}$ is given by
\[
\tilde{W}(x)=\mathcal{B}_{\lambda,I_{\lambda}(x)}(D\psi(x))+
\mathcal{B}^1_{\lambda,I_{\lambda}(x)}(f)+
\mathcal{B}^2_{\lambda,I_{\lambda}(x)}(\tilde{W}),
\]
such $\tilde{W}$ exists since the operator $Id-\mathcal{B}^2_{\lambda}$ is invertible.
Then
\[
0=(A\tilde{w},w)=B_{\lambda}[\tilde{w},w]=\mathcal{H}_{\lambda,I_{\lambda}}(f,D\psi,\tilde{W})\geq
\theta\int_{\mathbb{T}^d}\bar{m}|D\psi|^2+|f|^2\dx
\]
thus $\psi=0,f=0$.
Then, let $\hat{w}=(0,0,\hat{W},0)$ where we take
\[
\hat{W}-\mathcal{B}^2_{\lambda,I_{\lambda}(x)}(\hat{W})=W
\]
using the invertibility of operator $Id-\mathcal{B}^2_{\lambda}$.
This gives $\|W\|_{L^2}^2=B_{\lambda}[\hat{w}, w]=(A\hat{w}, w)=0$.
Choosing now $\bar{w}=(0,1,0,0)$  gives
$h=B_{\lambda}[\bar{w},w]=(A\bar w, w)=0$.
Thus $w=0$ and this implies $R(A)=F$.

\begin{clm}
For any $w_0\in F^0$ there exists a unique $w\in F$ such that $B_{\lambda}[w,\tilde{w}]=
(w_0,\tilde{w})_{F^0}$ for all $\tilde{w}\in F.$ This implies that $w$ is a unique weak solution to the equation $\mathcal{L}_{\lambda}(w)=w_0$.
Then regularity theory implies that $w\in F^2$ and
$\mathcal{L}_{\lambda}(w)=w_0$ in the sense of $F^2.$
\end{clm}

Consider the functional $\tilde{w}\mapsto(w_0,\tilde{w})_{F^0}$ on $F$. By Riesz representation theorem, there exists $\omega\in F$ such that $(w_0,\tilde{w})_{F^0}=(\omega,\tilde{w})_F$ now taking $w=A^{-1}\omega$ we get $$B[w,\tilde{w}]=(Aw,\tilde{w})_F=(\omega,\tilde{w})_F=(w_0,\tilde{w})_{F^0}.$$
Let $w=(\psi,f,W,h)$ and $w_0=(\psi_0,f_0,W_0,h_0)$, taking $\tilde{w}=(\tilde{\psi},0,0,0),(0,\tilde{f},0,0),(0,0,\tilde{W},0)$, and $(0,0,0,1)$, we get, respectively,
\begin{equation}
\label{1wk}
\int_{\mathbb{T}^d}
m_{\lambda}\mathcal{B}^0_{\lambda,I_{\lambda}(x)}(D\psi)D\tilde{\psi}+
m_{\lambda}\mathcal{B}^1_{\lambda,I_{\lambda}(x)}(f)D\tilde{\psi}+
m_{\lambda}\mathcal{B}^2_{\lambda,I_{\lambda}(x)}(W)D\tilde{\psi}
+f V_\lambda  D\tilde \psi
-DfD\tilde{\psi}=\int_{\mathbb{T}^d}\psi_0\tilde{\psi}
\end{equation}

\begin{equation}
\label{2wk}
\begin{split}
&\int_{\mathbb{T}^d}[
-\mathcal{A}^0_{\lambda,I_{\lambda}(x)}(D\psi)\tilde f+g'(m)f\tilde{f}-\mathcal{A}^1_{\lambda,I_{\lambda}(x)}(f)\tilde{f}\\&-
\mathcal{A}^2_{\lambda,I_{\lambda}(x)}(W)\tilde{f}+D\psi D\tilde{f}+h\tilde{f}]\dx
=\int_{\mathbb{T}^d}f_0\tilde{f},
\end{split}
\end{equation}
\[
\int_{\mathbb{T}^d} W\tilde{W}-\mathcal{B}^0_{\lambda,I_{\lambda}(x)}(D\psi)\tilde{W}-
\mathcal{B}^1_{\lambda,I_{\lambda}(x)}(f)\tilde{W}-
\mathcal{B}^2_{\lambda,I}(W)\tilde{W}=\int_{\mathbb{T}^d}W_0\tilde{W},
\]
and
\[
-\int_{\mathbb{T}^d}f=h_0.
\]
Since we can take $\tilde{\psi},\tilde{f}\in H^1(\mathbb{T}^d,\Rr)$ and $\tilde{W}\in L^2(\mathbb{T}^d,\Rr^d)$  arbitrarily, we get
\[W(x)=\mathcal{B}_{\lambda,I_{\lambda}(x)}(D\psi)+
\mathcal{B}^1_{\lambda,I_{\lambda}(x)}(f)+
\mathcal{B}^2_{\lambda,I_{\lambda}(x)}(W)\]
then the equation \eqref{1wk} gives that $f$ is a weak solution to

\[
\Delta f-\div(V_{\lambda}f)-\div(Wm_{\lambda})=\psi_0
\]
and\eqref{2wk} means that $\psi$ is a weak solution to

\[
\Delta \psi+\mathcal{A}_{\lambda,I_{\lambda}(x)}(D\psi)+\mathcal{A}^1_{\lambda,I_{\lambda}(x)}(f)-g'(m_{\lambda})f+
\mathcal{A}^2_{\lambda,I_{\lambda}(x)}(W)-h=f_0.
\]
The last equation gives $\Delta \psi\in L^2$ thus $\psi\in H^2$, then the equation for $W$ yields that $W\in H^1$ and the equation for $f$ gives $\Delta f\in L^2$ hence $f\in H^2$. We conclude that $w=(\psi,f,W,h)\in F^2$ and $\mathcal{L}_{\lambda}(w)=w_0.$

This implies that $\mathcal{L}_{\lambda}$ is bijective operator from $F^2$ to $F^0$. Then $\mathcal{L}_{\lambda}$ it is  injective as an operator from $F^k$ to $F^{k-2}$ for any $k\geq 2.$ To prove that it is also surjective take any $w_0\in F^{k-2}$, then there exists $w\in F^2$ such that $\mathcal{L}_{\lambda}(w)=w_0$. Using a bootstrap argument like the one in the proof of the previous lemma we conclude that in fact $w\in F^k$. This proves that $\mathcal{L}_{\lambda}\colon F^k\to F^{k-2}$ is surjective and therefore also bijective.

\begin{clm}
$\mathcal{L}_{\lambda}$ is an isomorphism from $F^k$ to $F^{k-2}$ for any $k\geq 2.$
\end{clm}
Since we have $\mathcal{L}\colon F^k \to F^{k-2}$ is bijective we just need to prove that it is also bounded. But that follows directly from the Assumptions (B\ref{HvHmreg}), and (B\ref{HvHmdb}).

\begin{clm}
We now prove that the set $\Lambda$ is open.
\end{clm}

Indeed for a point $\lambda_0\in \Lambda$ we have proven that the partial derivative $\mathcal{L}=D_2E(\lambda_0,v_{\lambda_0})\colon F^k \to F^{k-2}$ is an isometry for every $k$. Hence by the implicit function theorem
(see \cite{D1}) there exists a unique solution $v_{\lambda}\in F^k_+$ to $E(\lambda,v_{\lambda})=0$ for some neighborhood $U$ of $\lambda_0$. By the uniqueness these solutions coincide with each other for all $k$, thus there exists a solution $v_{\lambda}$ which belongs to all $F^k_+$, hence is a classical solution. Thus we conclude that $U\subset\Lambda$, which proves that $\Lambda$ is open. We have proven that $\Lambda$ is both open and closed, hence $\Lambda=[0,1]$.
\end{proof}

\section{Examples}
\label{exmpl}
\subsection{Velocity independent Hamiltonians}

In this section we consider an Hamiltonian that does not depend on the velocity field:
\[
H\colon\mathbb{T}^d\times\mathbb{R}^d\times \mathcal{P}^{ac}(\mathbb{T}^d)\to\Rr.
\]
And we assume that it can be extended to a function
\[
H\colon\mathbb{T}^d\times\mathbb{R}^d\times H^1(\mathbb{T}^d,\Rr)\to\Rr.
\]
The system \eqref{main} in this case is
\begin{equation}
\label{main*}
\begin{cases}
\Delta u(x) +H(x,Du(x),m(x))=\overline{H}\\
\Delta m(x)- \div(D_pH(x,Du(x),m(x))m(x))=0.
\end{cases}
\end{equation}
Let
\begin{equation*}
H_{\lambda}(x,p,m)=\lambda H(x,p,m) +(1-\lambda)\left[\frac{|p|^2}{2}-g(m)\right],\quad \lambda\in[0,1],
\end{equation*}
and consider the corresponding equations
\begin{equation}
\label{main*l}
\begin{cases}
\Delta u_{\lambda}(x) +H_{\lambda}(x,Du_{\lambda}(x),m_{\lambda}(x))=\overline{H}_{\lambda}\\
\Delta m_{\lambda}(x)- \div(D_pH_{\lambda}(x,Du_{\lambda}(x),m_{\lambda}(x))m_{\lambda}(x))=0.
\end{cases}
\end{equation}

\renewcommand{\labelenumi}{({\bf C\arabic{enumi}})}
\begin{enumerate}
\item
\label{c0}
We assume that functions $H(x,p,m), D^2_{pp}H(x,p,m), D^2_{px}H(x,p,m)$ are continuous in $m$ with respect to the uniform convergence.
\item
\label{c1}
\label{HvHmreg*}
We assume that for any $f\in H^k(\mathbb{T}^d,\Rr)$ the functions

$D_mH(x,p,m)(f)+g'(m(x))f(x)$ and
$D^2_{pm}H(x,p,m)(f),$ are smooth in $x$ and $p$.
\item
\label{c2}
For any positive integer $l$, $m\in H^k_+(\mathbb{T}^d,\Rr)$ and any number $R>0$, there exists a constant $C(l,m,R)$ such that
\[
\left|D^l_{x,p}[D_mH(x,p,m)(f)+g'(m(x))f(x)]\right|\leq C(l,m,R)\|f\|_{L^2},\]
and
\[
\left|D^l_{x,p}[D^2_{pm}H(x,p,m)(f)]\right|\leq C(l,m,R)\|f\|_{L^2},
\]
for all $ x\in\mathbb{T}^d, |p|\leq R$ and $f\in L^2(\mathbb{T}^d,\Rr)$.
\item
\label{c3}
There exists $\theta>0$ such that for any $(u_{\lambda},m_{\lambda})$ solution to \eqref{main*l}, and any $(Q,f)\in H^k(\mathbb{T}^d,\Rr^d)\times H^k(\mathbb{T}^d,\Rr)$ we have
\begin{equation*}
\begin{split}
&\int_{\mathbb{T}^d}[m_{\lambda}(x)Q(x)D^2_{pp}H(x,Du_{\lambda}(x),m_{\lambda})Q(x)+
m_{\lambda}(x)Q(x)D^2_{pm}H(x,Du_{\lambda}(x),m_{\lambda})(f)
\\&-f(x)D_{m}H(x,Du_{\lambda}(x),m_{\lambda})(f)]\dx\geq \theta\int_{\mathbb{T}^d}m_{\lambda}|Q|^2+|f(x)|^2\dx.
\end{split}
\end{equation*}
\end{enumerate}

\begin{teo}
Assume the Assumptions (A\ref{qzv})-(A\ref{bdv2}), (C\ref{c0})-(C\ref{c3}) hold. Furthermore, suppose that  one of the following assumptions is satisfied:
\begin{itemize}
\item[(i)] (A\ref{g}\ref{lnm}
\item[(ii)]  (A\ref{g}\ref{mgama}, with any $\gamma>0$ if $d\leq 4,$ and $\gamma<\frac{1}{d-4}$ if $d\geq 5.$
\end{itemize}
Then there exists a classical solution $(u,m,\overline{H})$ to \eqref{main*}.
\end{teo}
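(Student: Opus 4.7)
The plan is to derive this result from Theorem \ref{maint} (case A), by viewing \eqref{main*} as the $V$-independent specialization of \eqref{main}. Given any solution $(u,m,\overline H)$ to \eqref{main*}, setting $V := D_pH(x,Du,m)$ automatically produces a solution $(u,m,V,\overline H)$ to \eqref{main} for the trivially $V$-extended Hamiltonian; conversely, in that framework the third equation of \eqref{main} simply defines $V$ from $(u,m)$, so its first two components together with $\overline H$ solve \eqref{main*}. Since (A\ref{qzv})-(A\ref{bdv2}) and the relevant alternative of (A\ref{g}) are in force by assumption, the whole task reduces to showing that Hypotheses (B\ref{Hcont})-(B\ref{monotcond}) follow from (C\ref{c0})-(C\ref{c3}).

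The key observation is that, since $H$ is independent of $V$, the Fr\'echet derivatives $D_V H$ and $D^2_{pV} H$ vanish identically, so the operators $\mathcal{A}^2_{\lambda}$ and $\mathcal{B}^2_{\lambda}$ introduced in Section \ref{exist} are the zero operators. Consequently: (B\ref{Hcont}) reduces to (C\ref{c0}), because $V$-continuity and $V$-differentiability are vacuous; (B\ref{HvHmreg}) and (B\ref{HvHmdb}) reduce to (C\ref{c1}) and (C\ref{c2}) on the $m$-operators $\mathcal{A}^1_{\lambda}, \mathcal{B}^1_{\lambda}$ and are trivial on the vanishing $V$-operators; and (B\ref{invHpv}) is immediate since $Id - \mathcal{B}^2_{\lambda} = Id$.

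The only substantive point left is (B\ref{monotcond}). Since $\mathcal{A}^2 = \mathcal{B}^2 = 0$, the functional $\mathcal{H}_{\lambda, I}(Q,f,W)$ is independent of $W$ and collapses exactly to the integrand on the left-hand side of (C\ref{c3}). Thus (C\ref{c3}) provides the required lower bound $\theta \int_{\mathbb{T}^d} m|Q|^2 + |f|^2\, dx$; the penalty term on the right-hand side of (B\ref{monotcond}) is non-positive, so the inequality holds trivially for every $W$ and every constant $C$. Once all of (B\ref{Hcont})-(B\ref{monotcond}) are in hand, Theorem \ref{maint} produces a classical solution to \eqref{main}, from which the desired classical solution of \eqref{main*} is obtained by dropping the $V$-component.

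The main obstacle has in fact already been cleared in earlier sections: all of the $W^{k,q}$ a priori bounds needed for the closedness of $\Lambda$ come from Sections \ref{fintp}-\ref{regadjm} via case A of Theorem \ref{usmooth}, and openness via the implicit function theorem is supplied by the general linearization analysis of Section \ref{exist}. The only work specific to this corollary is the bookkeeping above, with the (minor) conceptual point being that the penalty term in (B\ref{monotcond}) exists precisely to absorb freedom in $W$ beyond the constraint $W = \mathcal{B}^0(Q) + \mathcal{B}^1(f) + \mathcal{B}^2(W)$ --- a constraint which, in the $V$-independent setting, is automatically inactive.
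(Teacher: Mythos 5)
Your proposal follows essentially the same route as the paper: reduce to Theorem \ref{maint} by observing that $\mathcal{A}^2_{\lambda}=\mathcal{B}^2_{\lambda}=0$ for a $V$-independent Hamiltonian, so that (B\ref{Hcont})--(B\ref{HvHmdb}) reduce to (C\ref{c0})--(C\ref{c2}), (B\ref{invHpv}) is vacuous, and (B\ref{monotcond}) loses its $W$-dependence so the penalty term can be discarded. One imprecision should be fixed in your treatment of (B\ref{monotcond}): you assert that $\mathcal{H}_{\lambda,I}$ ``collapses exactly to the integrand on the left-hand side of (C\ref{c3})'', but this is literally true only at $\lambda=1$. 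For $\lambda\in(0,1)$ one has $\mathcal{H}_{\lambda,I}=\lambda\mathcal{H}_{1,I}+(1-\lambda)\mathcal{H}_{0,I}$ with $\mathcal{H}_{0,I}(Q,f,W)=\int_{\mathbb{T}^d}m_{\lambda}|Q|^2+g'(m_{\lambda})|f|^2\,dx$, and (C\ref{c3}) controls only the $\mathcal{H}_{1,I}$ part. The paper closes this by invoking the a priori bounds, which give $m_{\lambda}\in[\bar m,\overline{C}]$ along solutions of \eqref{main*l}, hence $g'(m_{\lambda})\geq\eta_0>0$ by strict monotonicity of $g$, so that $\mathcal{H}_{0,I}$ is coercive with constant $\theta_0=\min\{1,\eta_0\}$ and the convex combination inherits the bound with $\min\{\theta,\theta_0\}$. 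This is a one-line repair, but it genuinely uses the uniform lower bound on $m_{\lambda}$, so it should be made explicit rather than subsumed under the claim that $\mathcal{H}_{\lambda,I}$ equals the left-hand side of (C\ref{c3}).
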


\begin{proof}
We just need to check Assumptions (B\ref{Hcont})-(B\ref{monotcond}) so that we can apply Theorem \ref{maint}.
For a point $I=(x,p,m)\in\Rr^d\times\mathbb{T}^d \times H^k_+(\mathbb{T}^d,\Rr)$ we have $\mathcal{A}^0_{\lambda,I}\colon\Rr^d\to \Rr,\mathcal{B}^0_{\lambda,I}\colon\Rr^d\to \Rr^d$,  given by

\[
\mathcal{A}^0_{\lambda,I}(w)=\lambda D_pH(x,p,m)\cdot w+(1-\lambda)p\cdot w,\quad
\mathcal{B}^0_{\lambda,I}(w)=\lambda D^2_{pp}H(x,p,m)w +(1-\lambda)w,
\]
\[
\mathcal{A}^1_{\lambda,I}(f)=\lambda[D_mH(x,p,m)(f)+g'(m(x))f(x)],\quad
\mathcal{B}^1_{\lambda,I}(f)=\lambda D^2_{pm}H(x,p,m)(f),
\]
and since there is not velocity field:
$
\mathcal{A}^2_{\lambda,I}(W)=
\mathcal{B}^2_{\lambda,I}(W)=0.
$
From this  Assumption ($B$\ref{invHpv}) holds automatically. Assumptions (B\ref{Hcont})-(B\ref{HvHmdb}) then follow easily form Assumptions (C\ref{c0})-(C\ref{c2}).
For a point $I(x)=(x,Du_{\lambda}(x),m_{\lambda},V)$ where $u_{\lambda},m_{\lambda}$ is a solution to \eqref{main*l} we have
\begin{equation*}
\begin{split}
\mathcal{H}_{\lambda,I}(Q,f,W)=\int_{\mathbb{T}^d}[&m_{\lambda}Q\mathcal{B}^0_{\lambda,I(x)}(Q)+
m_{\lambda}Q\mathcal{B}^1_{\lambda,I(x)}(f)+g'(m_{\lambda})f^2-f\mathcal{A}^1_{\lambda,I(x)}(f)]\dx
\end{split}
\end{equation*}
Note that because $\mathcal{H}_{\lambda, I}$ does not depend on $W$, it is enough for the Assumption (B\ref{monotcond}) to hold to check that
\begin{equation}
\begin{split}
\label{cccd}
\mathcal{H}_{\lambda, I}(Q,f,W)\geq \theta\int_{\mathbb{T}^d}m_{\lambda}|Q|^2+|f(x)|^2\dx.
\end{split}
\end{equation}
We have $\mathcal{H}_{\lambda, I}(Q,f,W)=\lambda\mathcal{H}_{1, I}(Q,f,W)+(1-\lambda)\mathcal{H}_{0, I}(Q,f,W),$ and
\[
\mathcal{H}_{0, I}(Q,f,W)=\int_{\mathbb{T}^d}m_{\lambda}|Q|^2+g'(m_{\lambda})|f(x)|^2\dx\geq\theta_0\int_{\mathbb{T}^d}m_{\lambda}|Q|^2+|f(x)|^2\dx,
\]
since for any solution $(u_{\lambda},m_{\lambda})$ to \eqref{main*l} $m_{\lambda}\in[\bar{m},\overline{C}]$ and $g$ is strictly increasing so $g'(m_{\lambda})\geq\eta_0$ for some constant $\eta_0>0$, and we take $\theta_0=\min\{1,\eta_0\}$. Then it is enough to check the condition \ref{cccd} just for $\lambda=1$.
Hence the condition \eqref{cccd} is equivalent to
\begin{equation*}
\begin{split}
&\mathcal{H}_{1,I}(Q,f,W)=\int_{\mathbb{T}^d}[m_{\lambda}(x)Q(x)D^2_{pp}H(x,Du_{\lambda}(x),m_{\lambda})Q(x)+
m_{\lambda}(x)Q(x)D^2_{pm}H(x,Du_{\lambda}(x),m_{\lambda})(f)
\\&-f(x)D_{m}H(x,Du_{\lambda}(x),m_{\lambda})(f)]\dx\geq \theta\int_{\mathbb{T}^d}m_{\lambda}|Q|^2+|f(x)|^2\dx,
\end{split}
\end{equation*}
that is (C\ref{c3}).
\end{proof}
Note that the Assumption (C\ref{c3}) can be interpreted in some sense as a operator inequality
analog to the condition
\[\left[
\begin{array}{cc}
	mD^2_{pp}H& \frac{1}{2}D^2_{pm}H\\
	\frac{1}{2}D^2_{pm}H&-D_{m}H
\end{array}
\right]\geq \theta I,
\]
obtained by Lions for the uniqueness of mean-field games with local dependence
(see \cite{Gueant2}).

\subsection{Velocity dependent example}
In this section we consider the following type of Hamiltonians:
\begin{equation}
\label{simplehamiltonian}
H(x,p,m,V)=h(x,p)+\alpha p\int_{\mathbb{T}^d} V m\dy-g(m).
\end{equation}
Where $h:\mathbb{T}^d\times\colon\Rr^d \to\Rr$ satisfies the following assumptions
\renewcommand{\labelenumi}{({\bf D\arabic{enumi}})}
\begin{enumerate}
\item
\label{d1}
$h$ is smooth in $x,p.$
\item
\label{d2}
$|p|^2\leq C+ C|h|$.
\item
\label{d3}
$p\cdot D_ph-h\geq ch-C$.
\item
\label{d4}
$|D_ph|^2\leq C+Ch$.
\item
\label{d5}
$D^2_{pp}h\geq\sigma I$, for some $\sigma>0$, where $I$ is the $d$-dimensional identity matrix.
\item
\label{d6}
$|D^2_{xx}h|, |D^2_{xp}h|^2\leq C+Ch$.
\item
\label{d7}
$h\leq C+C|p|^2$.
\item
\label{d8}
$|D_xh|\leq C+C|p|^\beta$, with $ 0\leq\beta<2$.

\end{enumerate}
For some constants $c,C>0.$
We have
\begin{equation}
H_{\lambda}(x,p,m,V)=h_{\lambda}(x,p)+\lambda\alpha p\int_{\mathbb{T}^d} V m\dy-g(m),\quad \lambda\in[0,1].
\end{equation}
where $h_{\lambda}(x,p)=\lambda h(x,p)+(1-\lambda)\frac{|p|^2}{2}$.

\begin{teo}
Assume the Assumptions (D\ref{d1})-(D\ref{d8}) hold. Furthermore, suppose that  one of the following assumptions is satisfied:
\begin{itemize}
\item[(i)] (A\ref{g}\ref{lnm}
\item[(ii)]  (A\ref{g}\ref{mgama}, with any $\gamma>0$ if $d\leq 4,$ and $\gamma<\frac{1}{d-4}$ if $d\geq 5.$
\end{itemize}
Then there there exists $\alpha_0>0$ such that for any $\alpha,|\alpha|\leq \alpha_0$ there exists a classical solution $(u,m,V,\overline{H})$ to \eqref{main}.
\end{teo}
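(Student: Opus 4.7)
The plan is to reduce the statement to Theorem \ref{maint}, case A. Set
\[
H_0(x,p,m,V)=h(x,p)+\alpha p\cdot\int_{\mathbb{T}^d} V\,m\,dy,
\]
so that (A\ref{qzv}) is satisfied with equality. First I would verify (A\ref{pbbl})--(A\ref{bdv2}) uniformly in $|\alpha|\le\alpha_0$ by treating the nonlocal term $\alpha p\cdot \int Vm\,dy$ as a small perturbation and absorbing it into the $\delta\int|V|^2dm$ slack. The elementary identity $|\alpha p\cdot\int Vm\,dy|\le|\alpha||p|(\int|V|^2dm)^{1/2}$ together with Young's inequality converts every bound of the form $|\cdot|\le C+C|h|$ or $|\cdot|\le C+Ch$ coming from (D\ref{d2})--(D\ref{d7}) into the corresponding bound with $H_0$ plus $\delta_\alpha\int|V|^2dm$, where $\delta_\alpha=O(\alpha^2)$. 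Taking $\alpha_0$ small enough so that $\delta_\alpha\le\delta_0$ (with $\delta_0$ from Proposition \ref{bV}) delivers (A\ref{pbbl}), (A\ref{difp}), (A\ref{bbl1}), (A\ref{qbd}), (A\ref{delta0}), (A\ref{conv}) (using (D\ref{d5}) and that the perturbation is linear in $p$ so $D^2_{pp}H=D^2_{pp}h\ge\sigma I$), and (A\ref{bdv})--(A\ref{bdv2}) (using (D\ref{d6}), (D\ref{d8}), and noting that $\widehat H_x,\widehat H_{xx},\widehat H_{xp}$ pick up only $h$-derivatives plus an $\alpha$-small contribution from differentiating $\alpha p\cdot\int Vm\,dy$ in $x$, which vanishes, and in $p$, which is $\alpha\int Vm\,dy$, bounded by Proposition \ref{bV}).

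Next I would verify the continuation-method hypotheses (B\ref{Hcont})--(B\ref{monotcond}). The Fr\'echet derivatives of $H$ in $m$ and $V$ here are explicit rank-one expressions:
\[
\mathcal{A}^1_{\lambda,I}(f)=\lambda\alpha p\cdot\int_{\mathbb{T}^d}Vf\,dy,\qquad \mathcal{B}^1_{\lambda,I}(f)=\lambda\alpha\int_{\mathbb{T}^d}Vf\,dy,
\]
\[
\mathcal{A}^2_{\lambda,I}(W)=\lambda\alpha p\cdot\int_{\mathbb{T}^d}Wm\,dy,\qquad \mathcal{B}^2_{\lambda,I}(W)=\lambda\alpha\int_{\mathbb{T}^d}Wm\,dy;
\]
note that the $g'(m)f$ term in the definition of $\mathcal{A}^1$ cancels exactly against $D_mH(f)=-g'(m)f+\lambda\alpha p\cdot\int Vf\,dy$. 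Smoothness in $(x,p)$ (B\ref{HvHmreg}) and the $L^2$ bounds (B\ref{HvHmdb}) follow immediately from these closed-form expressions combined with Proposition \ref{bV} and Cauchy--Schwarz. For (B\ref{invHpv}), $\mathcal{B}^2_\lambda$ is a constant-vector-valued finite-rank operator of norm at most $|\alpha|\|m\|_{L^2}$, so $\mathrm{Id}-\mathcal{B}^2_\lambda$ is invertible once $\alpha_0$ is chosen small enough.

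The main obstacle is (B\ref{monotcond}). Writing $\mathcal H_\lambda=(1-\lambda)\mathcal H_0+\lambda\mathcal H_1$, the convex-combination identity together with $\mathcal H_0(Q,f,W)=\int m|Q|^2+g'(m)f^2\,dx\ge\theta_0\int m|Q|^2+|f|^2\,dx$ (using $m\ge\bar m$ from Corollary \ref{mlinfty} and $g'(m)\ge\eta_0>0$ on the resulting range of $m$) reduces the problem to controlling $\mathcal H_1-\mathcal H_0$. For the present Hamiltonian this difference equals
\[
\int_{\mathbb{T}^d}\!\bigl[(D^2_{pp}h-I)mQ\cdot Q + \alpha mQ\cdot\!\!\int Vf\,dy + \alpha mQ\cdot\!\!\int Wm\,dy - \alpha f\,Du\cdot\!\!\int Vf\,dy - \alpha f\,Du\cdot\!\!\int Wm\,dy\bigr]dx,
\]
plus a nonnegative contribution $\int(D^2_{pp}h-I)mQ\cdot Q\,dx$ (handled by (D\ref{d5}) if $\sigma\ge1$, or else absorbed into $\theta_0$ by choosing $\theta$ smaller). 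Each remaining cross term carries a factor $\alpha$; using Proposition \ref{bV}, Theorem \ref{usmooth} (which gives $Du,V\in L^\infty$ uniformly along the homotopy), Cauchy--Schwarz on the integrals in $y$, and Young's inequality, each such term is bounded by $C|\alpha|\int m|Q|^2+|f|^2\,dx+C|\alpha|\|W-\mathcal B^0(Q)-\mathcal B^1(f)-\mathcal B^2(W)\|_{L^2}^2+\mathrm{(absorbable)}$. Choosing $\alpha_0$ small enough that $C\alpha_0\le\theta_0/2$ yields (B\ref{monotcond}) with $\theta=\theta_0/2$.

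Finally, since cases (i) $g=\ln m$ and (ii) $g=m^\gamma$ (with the dimensional restriction on $\gamma$) are covered by case A of Theorem \ref{maint}, and since all hypotheses have been checked for $|\alpha|\le\alpha_0$, Theorem \ref{maint} produces a classical solution $(u,m,V,\overline H)$ to \eqref{main}, completing the proof.
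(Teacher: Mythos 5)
Your proposal is correct and follows essentially the same route as the paper: reduce to Theorem \ref{maint} (case A) by absorbing the nonlocal term into the $\delta\int|V|^2\dm$ slack with $\delta=O(\alpha^2)$, compute the rank-one operators $\mathcal{A}^i_{\lambda},\mathcal{B}^i_{\lambda}$ explicitly to get (B\ref{Hcont})--(B\ref{invHpv}), and establish (B\ref{monotcond}) by bounding the $\alpha$-small cross terms and controlling $\left(\int_{\mathbb{T}^d}Wm\,\dx\right)^2$ through the residual $W-\mathcal{B}^0(Q)-\mathcal{B}^1(f)-\mathcal{B}^2(W)$. The only cosmetic difference is that you invert $Id-\mathcal{B}^2_{\lambda}$ by a norm-smallness argument where the paper writes the inverse explicitly; both are valid.
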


\begin{proof}
Assumptions (D\ref{d1})-(D\ref{d8}) imply easily (A\ref{qzv})-(A\ref{bdv2}) with $\delta=\frac{\alpha^2}{2}$, so it is enough to check the Assumptions (B\ref{Hcont})-(B\ref{monotcond}). Assumption (B\ref{Hcont}) easily follows from \eqref{simplehamiltonian}.
Now we proceed to checking Assumptions (B\ref{HvHmreg})-(B\ref{monotcond}), let $I=(x,p,m,V)\in\mathbb{T}^d \times\Rr^d\times H^k_+(\mathbb{T}^d,\Rr)\times L^2(\mathbb{T}^d,\Rr^d)$, we have:
\[
\mathcal{A}^0_{\lambda,I}(w)=D_ph_{\lambda}(x,p)\cdot w+\lambda\alpha\int_{\mathbb{T}^d} V m\dy\cdot w ,\quad
\mathcal{B}^0_{\lambda,I}(w)=D_{pp}h_{\lambda}w,
\]
\[
\mathcal{A}^1_{\lambda,I}(f)=\lambda\alpha p\cdot\int_{\mathbb{T}^d} V f\dy,\quad
\mathcal{B}^1_{\lambda,I}(f)=\lambda\alpha \int_{\mathbb{T}^d} V f\dy,
\]

\[
\mathcal{A}^2_{\lambda,I}(W)=\lambda\alpha p\cdot\int_{\mathbb{T}^d} Wm\dy,\quad
\mathcal{B}^2_{\lambda,I}(W)=\lambda\alpha \int_{\mathbb{T}^d} Wm\dy.
\]
For a fixed $f\in H^k(\mathbb{T}^d,\Rr)$ and $W\in L^2(\mathbb{T}^d,\Rr^d)$ the functions $\mathcal{A}^1_{\lambda,I}(f),$ $\mathcal{B}^1_{\lambda,I}(f),$ $\mathcal{A}^2_{\lambda,I}(W),$ $\mathcal{B}^2_{\lambda,I}(W)$ do not depend on $x$ and are either linear in $p$ or a constant.  For all $ x\in\mathbb{T}^d, |p|\leq R,$ we have
\[
|\mathcal{A}^1_{\lambda}(f)|\leq R|\alpha|\left|\int_{\mathbb{T}^d} Vf\dy\right|\leq R|\alpha|\|V\|_{L^2(\mathbb{T}^d)}\|f\|_{L^2(\mathbb{T}^d)}
\]

\[
|D_p[\mathcal{A}^1_{\lambda}(f)]| \leq|\alpha|\left|\int_{\mathbb{T}^d} Vf\dy\right|\leq |\alpha|\|V\|_{L^2(\mathbb{T}^d)}\|f\|_{L^2(\mathbb{T}^d)}
\]

\[
|\mathcal{B}^1_{\lambda}(f)| \leq|\alpha|\left|\int_{\mathbb{T}^d} Vf\dy\right|\leq |\alpha|\|V\|_{L^2(\mathbb{T}^d)}\|f\|_{L^2(\mathbb{T}^d)}
\]

\[
|\mathcal{A}^2_{\lambda}(W)|\leq R|\alpha|\left|\int_{\mathbb{T}^d} Wm\dy\right|\leq R|\alpha|\|m\|_{L^2(\mathbb{T}^d)}\|W\|_{L^2(\mathbb{T}^d)}
 \]

\[
|D_p\mathcal{A}^2_{\lambda}(W)|\leq|\alpha|\left|\int_{\mathbb{T}^d} Wm\dy\right|\leq |\alpha|\|m\|_{L^2(\mathbb{T}^d)}\|W\|_{L^2(\mathbb{T}^d)}
\]

\[
|\mathcal{B}^2_{\lambda}(W)|\leq|\alpha|\left|\int_{\mathbb{T}^d} Wm\dy\right|\leq R|\alpha|\|m\|_{L^2(\mathbb{T}^d)}\|W\|_{L^2(\mathbb{T}^d)}
\]
So the Assumptions (B\ref{HvHmdb}), (B\ref{HvHmreg}) hold.

To check Assumption (B\ref{invHpv}), take a point $(P,m,V)\in C^{\infty}(\mathbb{T}^d,\Rr^d)\times C^{\infty}(\mathbb{T}^d,\Rr)\times L^2(\mathbb{T}^d,\Rr^d),$ the operators
$\mathcal{A}^1_{\lambda},\mathcal{B}^1_{\lambda}\colon H^k(\mathbb{T}^d,\Rr)\to C^{\infty}(\mathbb{T}^d,\Rr)$
are given by
\[
\mathcal{A}^1_{\lambda}(f)(x)=\lambda\alpha P(x)\cdot\int_{\mathbb{T}^d} V f\dy,\quad
\mathcal{B}^1_{\lambda}(f)(x)=\lambda\alpha \int_{\mathbb{T}^d} V f\dy,
\]
where $I(x)=(x,P(x),m,V)$.
Similarly, the operators $\mathcal{A}^2_{\lambda},\mathcal{B}^2_{\lambda}\colon L^2(\mathbb{T}^d,\Rr^d)\to C^{\infty}(\mathbb{T}^d,\Rr^d)$
are given by
\[
\mathcal{A}^2_{\lambda}(W)(x)=\mathcal{A}^2_{\lambda,I(x)}(W)=\lambda\alpha P(x)\cdot\int_{\mathbb{T}^d} W m\dy,\quad
\mathcal{B}^2_{\lambda}(W)(x)=\mathcal{B}^2_{\lambda,I(x)}(W)=\lambda\alpha \int_{\mathbb{T}^d} W m\dy.
\]
Let $\beta=\lambda\alpha$, and assume $|\alpha|<1$, then also $|\beta|<1$ and we have
\[
(Id-\mathcal{B}^2_{\lambda})(W)=W-\beta \int_{\mathbb{T}^d} W m\dy:=\widetilde{W}
\]
Integrating this with respect to $m$ we get $(1-\beta)\int_{\mathbb{T}^d} W m\dy=\int_{\mathbb{T}^d} \widetilde{W} m\dy$, hence
\vskip0.5cm
$W=\widetilde{W}+\frac{\beta}{1-\beta}\int_{\mathbb{T}^d}\widetilde{W} m\dy$, therefore
\[
(Id-\mathcal{B}^2_{\lambda})^{-1}(\widetilde{W})=\widetilde{W}+\frac{\beta}{1-\beta}\int_{\mathbb{T}^d}\widetilde{W} m\dy,
\]
and so
\[
\|(Id-\mathcal{B}^2_{\lambda})^{-1}(\widetilde{W})\|_{L^2}\leq\left(1+\left|\frac{\beta}{1-\beta}\right|\right)
\|m\|_{L^2}\|\widetilde{W}\|_{L^2},
\]
which means $Id-\mathcal{B}^2_{\lambda}\colon L^2(\mathbb{T}^d,\Rr^d)\to \colon L^2(\mathbb{T}^d,\Rr^d)$ is invertible.

Now we check the Assumption (B\ref{monotcond}).
For a point $I(x)=(x,P(x),m,V)$ with $P(x)=Du_{\lambda}, m=m_{\lambda},V=V_ {\lambda}$, where $(u_{\lambda}, m_{\lambda}, V_ {\lambda} )$ is a solution to \eqref{mainl},  we have $\mathcal{H}_{\lambda,I}\colon H^k(\mathbb{T}^d,\Rr^d)\times H^k(\mathbb{T}^d,\Rr)\times L^2(\mathbb{T}^d,\Rr^d)\to\Rr$ defined by
\begin{equation}
\label{hli}
\begin{split}
\mathcal{H}_{\lambda,I}(Q,f,W)=\int_{\mathbb{T}^d}[&m(x)Q(x)\mathcal{B}^0_{\lambda,I(x)}(Q(x))+
m(x)Q(x)\mathcal{B}^1_{\lambda,I(x)}(f)+g'(m(x))f^2(x)\\&-f(x)\mathcal{A}^1_{\lambda,I(x)}(f)+
m(x)Q(x)\mathcal{B}^2_{\lambda,I(x)}(W)-
f(x)\mathcal{A}^2_{\lambda,I(x)}(W)]\dx.
\end{split}
\end{equation}
Simple computations give
\[
\int_{\mathbb{T}^d}m(x)Q(x)\mathcal{B}^0_{\lambda,I(x)}(Q(x))\dx=
\int_{\mathbb{T}^d}m(x)Q(x)D^2_{pp}h_{\lambda}(P(x),x)Q(x)\dx\geq\sigma_1\int_{\mathbb{T}^d}m(x)|Q(x)|^2\dx,
\]
where $\sigma_1=\min\{1,\sigma\}$,
\[
\int_{\mathbb{T}^d}m(x)Q(x)\mathcal{B}^1_{\lambda,I(x)}(f)\dx=\int_{\mathbb{T}^d}[m(x)Q(x)\mathcal\beta\int_{\mathbb{T}^d}V(y)f(y)\dy ]\dx=
\beta\int_{\mathbb{T}^d}m(x)Q(x)\dx\int_{\mathbb{T}^d}Vf\dx,
\]

\[
\int_{\mathbb{T}^d}f\mathcal{A}^1_{\lambda,I(x)}(f)\dx=\int_{\mathbb{T}^d}\left[f(x)\beta P(x)\cdot\int_{\mathbb{T}^d}V(y)f(y)\dy\right]\dx=
\beta\int_{\mathbb{T}^d}P(x)f(x)\dx\int_{\mathbb{T}^d}V(x)f(x)\dx,
\]
\begin{align*}
\int_{\mathbb{T}^d}m(x)Q(x)\mathcal{B}^2_{\lambda,I(x)}(W)\dx
&=\int_{\mathbb{T}^d}\left[m(x)Q(x)\beta\int_{\mathbb{T}^d}W(y)m(y)\dy\right] \dx\\
&=\beta\int_{\mathbb{T}^d}m(x)Q(x)\dx\int_{\mathbb{T}^d}W(x)m(x)\dx,
\end{align*}

\[
\int_{\mathbb{T}^d}f(x)\mathcal{A}^2_{\lambda,I(x)}(W)\dx
=\int_{\mathbb{T}^d}\left[f(x)\beta P(x)\cdot\int_{\mathbb{T}^d}W(y)m(y)\dy\right] \dx
=\beta\int_{\mathbb{T}^d}P(x)f(x)\dx\int_{\mathbb{T}^d}W(x)m(x)\dx.
\]
Plugging all this into \eqref{hli} we obtain
\begin{equation*}
  \begin{split}
&\mathcal{H}_{\lambda,I}(Q,f,W)\geq\sigma_1\int_{\mathbb{T}^d}m|Q|^2\dx+\int_{\mathbb{T}^d}g'(m)|f|^2\dx
+\beta\int_{\mathbb{T}^d}mQ\dx\int_{\mathbb{T}^d}Vf\dx
\\&-\beta\int_{\mathbb{T}^d}Pf\dx\int_{\mathbb{T}^d}Vf\dx
+\beta\int_{\mathbb{T}^d}mQ\dx\int_{\mathbb{T}^d}Wm\dx
-\beta\int_{\mathbb{T}^d}Pf\dx\int_{\mathbb{T}^d}Wm\dx
  \end{split}
\end{equation*}
Since $(u_{\lambda}, m_{\lambda}, V_ {\lambda} )$ is a solution to \eqref{mainl},  we have that $\|m\|_{\infty}\|P\|_{\infty},\,\|V\|_{\infty}\leq \overline{C},\,m\geq \bar{m}$, for some constants $\overline{C},\bar{m}>0$. Since $g$ is strictly increasing we have that $g'(m)>\eta_0>0$ for all $m\in[\bar{m},\overline{C}]$. Then using Cauchy's and Holder's inequalities we get
\[
\left|
\int_{\mathbb{T}^d}mQ\dx\int_{\mathbb{T}^d}Vf\dx
\right|
\leq \frac{1}{2}\left[\left(\int_{\mathbb{T}^d}mQ\dx\right)^2+\left(\int_{\mathbb{T}^d}Vf\dx\right)^2\right]\leq
\frac{1}{2}\left(\int_{\mathbb{T}^d}m|Q|^2\dx+\overline{C}^2\int_{\mathbb{T}^d}|f|^2\dx\right),
\]
similarly
\[\left|
\int_{\mathbb{T}^d}Pf\dx\int_{\mathbb{T}^d}Vf\dx
\right|
\leq \overline{C}^2\int_{\mathbb{T}^d}|f|^2\dx,
\]

\[\left|
\int_{\mathbb{T}^d}mQ\dx\int_{\mathbb{T}^d}Wm\dx\right|\leq \frac{1}{2}\left[\int_{\mathbb{T}^d}m|Q|^2\dx+\left(\int_{\mathbb{T}^d}Wm\dx\right)^2\right],
\]

\[\left|
\int_{\mathbb{T}^d}Pf\dx\int_{\mathbb{T}^d}Wm\dx\right|\leq \frac{1}{2}\left[\overline{C}^2\int_{\mathbb{T}^d}|f|^2\dx+\left(\int_{\mathbb{T}^d}Wm\dx\right)^2\right].
\]
This yields
\[
\mathcal{H}_{\lambda,I}(Q,f,W)\geq\sigma_1\int_{\mathbb{T}^d}m|Q|^2\dx+\int_{\mathbb{T}^d}g'(m)|f|^2\dx-|\alpha| \int_{\mathbb{T}^d}m|Q|^2\dx-\frac{3}{2}|\alpha|\overline{C}^2\int_{\mathbb{T}^d}|f|^2\dx-|\alpha| \left(\int_{\mathbb{T}^d}Wm\dx\right)^2,
\]
thus
\[
\mathcal{H}_{\lambda,I}(Q,f,W)\geq(\sigma_1-|\alpha|)\int_{\mathbb{T}^d}m|Q|^2\dx+(\eta_0-\frac{3}{2}|\alpha|\overline{C}^2)\int_{\mathbb{T}^d}|f|^2\dx
-|\alpha| \left(\int_{\mathbb{T}^d}Wm\dx\right)^2.
\]
To estimate the last term let
\begin{equation}
R=W-\mathcal{B}^0_{\lambda,I(x)}(Q)-\mathcal{B}^1_{\lambda,I(x)}(f)-\mathcal{B}^2_{\lambda,I(x)}(W)=
W-D^2_{pp}h_{\lambda} Q-\beta\int_{\mathbb{T}^d}Vf\dx-\beta\int_{\mathbb{T}^d}Wm\dx.
\end{equation}
Integrating with respect to $m$ we get
\[
\int_{\mathbb{T}^d}Rm\dx=(1-\beta)\int_{\mathbb{T}^d}Wm\dx-\int_{\mathbb{T}^d}
D^2_{pp}h_{\lambda} Qm\dx-\beta\int_{\mathbb{T}^d}Vf\dx,
\]
\[
\int_{\mathbb{T}^d}Wm\dx=\frac{1}{1-\beta}\left[\int_{\mathbb{T}^d}Rm\dx+\int_{\mathbb{T}^d}
D^2_{pp}h_{\lambda} Qm\dx+\beta\int_{\mathbb{T}^d}Vf\right],
\]
hence  the inequality $(a+b+c)^2\leq 3 (a^2+b^2+c^2)$, Holder's inequality and the bound $|V|\leq \overline{C}$ yield
\[
\left(\int_{\mathbb{T}^d}Wm\dx\right)^2\leq\frac{3}{(1-\beta)^2}
\left[\int_{\mathbb{T}^d}R^2m\dx+\int_{\mathbb{T}^d}|D^2_{pp}h_{\lambda} Q|^2m\dx+|\beta|^2\overline{C}^2\int_{\mathbb{T}^d}|f|^2\right].
\]
If $|\alpha|<1/2$ then $|\beta|\leq|\alpha|<1$ and $|1-\beta|\geq 1-|\beta|\geq 1-|\alpha|<1/2$, thus
\[
\left(\int_{\mathbb{T}^d}Wm\dx\right)^2\leq 12
\left[\int_{\mathbb{T}^d}R^2m\dx+\int_{\mathbb{T}^d}|D^2_{pp}h_{\lambda} Q|^2m\dx+\overline{C}^2\int_{\mathbb{T}^d}|f|^2\right],
\]
and we get
\[
\mathcal{H}_{\lambda,I}(Q,f,W)\geq(\sigma_1-13|\alpha|)\int_{\mathbb{T}^d}m |D^2_{pp}h_{\lambda} Q|^2\dx+
(\eta_0-\frac{27}{2}|\alpha|\overline{C}^2)\int_{\mathbb{T}^d}|f|^2\dx
-12 \int_{\mathbb{T}^d}|R|^2m\dx.
\]
Let  $\alpha_0=\min\{\,\frac{\sigma_1}{26},\frac{\eta_0}{27} \,\}$, $\theta=\min\{\,\frac{\sigma_1}{2},\frac{\eta_0}{2} \,\}$,
then for $|\alpha|\leq \alpha_0$ we obtain
\begin{equation*}
\begin{split}
&\mathcal{H}_{\lambda,I}(Q,f,W)\geq\frac{\sigma_1}{2}\int_{\mathbb{T}^d}m| Q|^2\dx+
\frac{\eta_0}{2}\int_{\mathbb{T}^d}|f|^2\dx
-12 \int_{\mathbb{T}^d}|R|^2m\dx\\&
\geq\theta\int_{\mathbb{T}^d}m|Q|^2+|f|^2\dx-12 \int_{\mathbb{T}^d}|R|^2m\dx,
\end{split}
 \end{equation*}
 so Assumption (B\ref{monotcond}) holds.
\end{proof}


\bibliographystyle{alpha}

\bibliography{mfg}

\end{document}